

  \documentclass[11pt,letterpaper,oneside]{amsart}


  \usepackage{amsmath,amssymb,amsthm,amsfonts}
  \usepackage{enumerate}
  \usepackage[margin=1.2in]{geometry}
  \usepackage{xspace,xcolor}
  \usepackage[
     breaklinks,colorlinks,
     citecolor=blue,linkcolor=blue,urlcolor=teal
  ]{hyperref}
  
  \usepackage{tikz}
  \usepackage{tkz-euclide}

  \usetikzlibrary{arrows,calc,decorations.pathmorphing,backgrounds,positioning,fit}

  \usepackage{etoolbox}
  \makeatletter
  \patchcmd{\@secnumfont}{\mdseries}{\@empty}{}{}
  \patchcmd{\section}{\normalfont\scshape}{\large\bfseries}{}{}

  \makeatother



  \newcommand{\calA}{\mathcal{A}}
  
  \newcommand{\calC}{\mathcal{C}}

  \newcommand{\calG}{\mathcal{G}}

  \newcommand{\calL}{\mathcal{L}}
  
  \newcommand{\calN}{\mathcal{N}}
  
  \newcommand{\calP}{\mathcal{P}}
  \newcommand{\calQ}{\mathcal{Q}}
  
  \newcommand{\calS}{\mathcal{S}}
  \newcommand{\calT}{\mathcal{T}}

  \newcommand{\calX}{\mathcal{X}}
  \newcommand{\calY}{\mathcal{Y}}


  \newcommand{\HH}{\mathbb{H}}

  \newcommand{\RR}{\mathbb{R}}

  \newcommand{\ZZ}{\mathbb{Z}}





  \newtheorem{theorem}{Theorem}[section]
  \newtheorem{proposition}[theorem]{Proposition}
  \newtheorem{corollary}[theorem]{Corollary}
  \newtheorem{lemma}[theorem]{Lemma}

  \theoremstyle{definition}
  \newtheorem{definition}[theorem]{Definition}
  \newtheorem{remark}[theorem]{Remark}
  \newtheorem{claim}[theorem]{Claim}
  \newtheorem*{claim*}{Claim}
  
  \newtheorem{example}[theorem]{Example}
  
  \newtheorem*{question*}{Question}
  \newtheorem*{answer*}{Answer}
  \newtheorem*{application*}{Application}



  \newcommand{\secref}[1]{Section~\ref{Sec:#1}}
  \newcommand{\subsecref}[1]{Subsection~\ref{Subsec:#1}}
  \newcommand{\thmref}[1]{Theorem~\ref{Thm:#1}}
  
  \newcommand{\lemref}[1]{Lemma~\ref{Lem:#1}}
  \newcommand{\propref}[1]{Proposition~\ref{Prop:#1}}
  \newcommand{\clmref}[1]{Claim~\ref{Clm:#1}}
  
  \newcommand{\exaref}[1]{Example~\ref{Exa:#1}}

  \newcommand{\figref}[1]{Figure~\ref{Fig:#1}}
  \newcommand{\defref}[1]{Definition~\ref{Def:#1}}
  
  \newcommand{\eqnref}[1]{Equation~\ref{Eqn:#1}}


  \DeclareMathOperator{\diam}{diam}
  \DeclareMathOperator{\diamS}{diam_{\cc}}
  \DeclareMathOperator{\str}{stretch}
  \DeclareMathOperator{\I}{i}
  \DeclareMathOperator{\twist}{twist}


  \newcommand{\emul}{\stackrel{{}_\ast}{\asymp}}
  \newcommand{\gmul}{\stackrel{{}_\ast}{\succ}}
  \newcommand{\lmul}{\stackrel{{}_\ast}{\prec}}
  \newcommand{\eadd}{\stackrel{{}_+}{\asymp}}
  \newcommand{\gadd}{\stackrel{{}_+}{\succ}}
  \newcommand{\ladd}{\stackrel{{}_+}{\prec}}


  


  \newcommand{\s}{\ensuremath{\calS}\xspace} 
  \newcommand{\torus}{\ensuremath{\s_{1,1}}\xspace} 
  \newcommand{\T}{\ensuremath{\calT(\s)}\xspace} 
   
    
   
  \newcommand{\cc}{\ensuremath{\calC(\s)}\xspace} 

   
   
   
   
    


   
   

  \newcommand{\Teich}{{Teichm\"uller }} 


    
  \newcommand{\param}{{\mathchoice{\mkern1mu\mbox{\raise2.2pt\hbox{$
  \centerdot$}}
  \mkern1mu}{\mkern1mu\mbox{\raise2.2pt\hbox{$\centerdot$}}\mkern1mu}{
  \mkern1.5mu\centerdot\mkern1.5mu}{\mkern1.5mu\centerdot\mkern1.5mu}}}

  \renewcommand{\setminus}{{\smallsetminus}}
  
  \newcommand{\st}{\mathbin{\mid}} 
  \newcommand{\ST}{\mathbin{\Big|}} 
  \newcommand{\from}{\colon\thinspace} 
  \newcommand{\ep}{\ensuremath{\epsilon}\xspace}


  \newcommand{\GL}{\ensuremath{\calG}\xspace} 
  \newcommand{\dL}{\ensuremath{d_{\text{Th}}}\xspace} 
  \newcommand{\dH}{\ensuremath{d_{\HH}}\xspace} 
  \newcommand{\Lip}{\ensuremath{\text{L}}\xspace} 
   
  \newcommand{\dS}{\ensuremath{d_{\cc}}\xspace}


  \newcommand{\balpha}{{\overline \alpha}}

  \newcommand{\bomega}{{\overline \omega}}
  \newcommand{\blambda}{{\overline \lambda}}
  \newcommand{\etab}{{\overline \eta}}
  
  
  \newcommand{\talpha}{{\widetilde\alpha}}
  \newcommand{\tbeta}{{\widetilde\beta}}
  \newcommand{\tlambda}{{\widetilde\lambda}}
  \newcommand{\tomega}{{\widetilde\omega}}
  \newcommand{\tgamma}{{\widetilde\gamma}}
  \newcommand{\teta}{{\widetilde\eta}}
  \newcommand{\tf}{{\widetilde f}}

  \newcommand{\tx}{{\widetilde X}}
  \newcommand{\ty}{{\widetilde Y}}
  
  
  \newcommand{\hgline}[2]{
  \pgfmathsetmacro{\thetaone}{#1}
  \pgfmathsetmacro{\thetatwo}{#2}
  \pgfmathsetmacro{\theta}{(\thetaone+\thetatwo)/2}
  \pgfmathsetmacro{\phi}{abs(\thetaone-\thetatwo)/2}
  \pgfmathsetmacro{\close}{less(abs(\phi-90),0.0001)}
  \ifdim \close pt = 1pt
      \draw (\theta+90:1) -- (\theta-90:1);
  \else
      \pgfmathsetmacro{\R}{tan(\phi)}
      \pgfmathsetmacro{\distance}{sqrt(1+\R^2)}
      \draw (\theta:\distance) circle (\R);
  \fi
  }
  
  \begin{document}


  \title    {The shadow of a Thurston geodesic to the curve graph } \author
  {Anna Lenzhen} \author   {Kasra Rafi} \author   {Jing Tao}
  
  \thanks{The authors acknowledge support from U.S. National Science
    Foundation grants DMS \#1107452, \#1107263, \#1107367 ``RNMS: Geometric
    structures And Representation varieties" (the GEAR Network). The second
    author is partially supported by NCERC \#435885. The third author is
    partially supported by NSF DMS \#1311834}
  

  \begin{abstract} 
    
    We study the geometry of the Thurston metric on \Teich space by
    examining its geodesics and comparing them to \Teich geodesics. We
    show that, similar to a \Teich geodesic, the shadow of a Thurston geodesic to
    the curve graph is a reparametrized quasi-geodesic. However, we show
    that the set of short curves along the two geodesics
    are not identical. 

  \end{abstract}
  
  \maketitle
  

\section{Introduction}
  
  \label{Sec:Intro}

  In \cite{Thu86a} Thurston introduced a metric on \Teich space in terms of
  the least possible value of the global Lipschitz constant between two
  hyperbolic surfaces of finite volume. Even though this is an asymmetric
  metric, Thurston constructed geodesics connecting any pair of points in
  \Teich space that are concatenations of \emph{stretch paths}. However,
  there is no unique geodesic connecting two points in \Teich space $\T$.
  We construct some examples to highlight the extent of non-uniqueness of
  geodesics:
   
  \begin{theorem} \label{Thm:Bad-Geodesic}

    For every $D>0$, there are points $X,Y,Z \in \T$ and Thurston geodesic
    segments $\GL_1$ and $\GL_2$ starting from $X$ and ending in $Y$ with
    the following properties:
    \begin{enumerate}
      
      \item Geodesics $\GL_1$ and $\GL_2$ do not fellow travel each other;
      the point $Z$ lies in path $\GL_1$ but is at least $D$ away from any
      point in $\GL_2$. 
      
      \item The geodesic $\GL_1$ parametrized in any way in the reverse
      direction is not a geodesic. In fact, the point $Z$ is at least $D$
      away from any point in any geodesic connecting $Y$ to $X$. 

    \end{enumerate}

  \end{theorem}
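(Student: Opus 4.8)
The plan is to locate the non-uniqueness inside the thin part of a single curve, exploiting the freedom a Thurston geodesic has to move the complementary subsurface while that curve is the one realizing the Lipschitz constant. Assume \s has complexity at least two (the low-complexity surfaces can be handled separately). Fix a non-separating simple closed curve $\alpha\subset\s$, let $\mathcal{W}=\s\setminus\alpha$ be its (connected) complement, and write $X|_{\mathcal{W}},Z|_{\mathcal{W}},\dots$ for restrictions to $\mathcal{W}$. Fix two distinct minimal, filling, uniquely ergodic laminations $\nu_1,\nu_2\in\EL(\mathcal{W})$ (say the attracting laminations of two pseudo-Anosov maps of $\mathcal{W}$), and let $\tau_i$ be a complete geodesic lamination of $\mathcal{W}$ containing $\nu_i$; these will push the $\mathcal{W}$-coordinates of $\GL_1$ and $\GL_2$ out toward $\nu_1$ and $\nu_2$. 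Take $X\in\T$ with $\ell_X(\alpha)=\ell_0$, a very small number to be fixed at the end in terms of $D$, and let $Y\in\T$ agree with $X$ on $\mathcal{W}$ and on the twist around $\alpha$ but have $\ell_Y(\alpha)=e^{T}\ell_0$ for a large $T$. When $\ell_0\ll e^{-T}$, the collar lemma shows that $\alpha$ realizes the Lipschitz constant from $X$ to $Y$ --- curves disjoint from $\alpha$ have length ratio $1$, curves crossing $\alpha$ have length ratio $<1$ --- so $\dL(X,Y)=\log\big(\ell_Y(\alpha)/\ell_X(\alpha)\big)=T$, whereas in the reverse direction only the crossing curves have ratio exceeding $1$, and by a factor $1+O\!\big(T/\log(1/\ell_0)\big)$, so $\dL(Y,X)=O(1)$.

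To build the geodesics I use two facts about Thurston's stretch construction that I would record as lemmas: \emph{(i)} if $\alpha$ is a closed leaf of a complete geodesic lamination $\mu=\alpha\cup\tau$ of \s, then the stretch path directed by $\mu$ multiplies $\ell(\alpha)$ by $e^{t}$ and restricts, on the $\mathcal{W}$-coordinate, to the Thurston stretch path of $\calT(\mathcal{W})$ directed by $\tau$, run at unit speed; and \emph{(ii)} a concatenation of such stretch paths from $X$ to $Y$ whose directing laminations all have $\alpha$ as a leaf is automatically a Thurston geodesic once its total length equals $T$, since then its length is $\dL(X,Y)$ by the first paragraph. Let $\GL_1$ begin with the \s-stretch path directed by $\alpha\cup\tau_1$, run for a long time $t_1$; by \emph{(i)} its $\mathcal{W}$-coordinate is the $\tau_1$-stretch path of $\calT(\mathcal{W})$, which is a geodesic converging to $\nu_1$ in $\PML(\mathcal{W})$, so the terminal point $Z$ has $Z|_{\mathcal{W}}$ far out toward $\nu_1$ and $\ell_Z(\alpha)=e^{t_1}\ell_0$. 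Continue $\GL_1$ from $Z$ to $Y$ by lifting a Thurston geodesic of $\calT(\mathcal{W})$ from $Z|_{\mathcal{W}}$ back to $X|_{\mathcal{W}}$ to a concatenation of \s-stretch paths (again with $\alpha$ a leaf of each directing lamination), and adjust the free parameters so that the twist around $\alpha$ comes out right; by \emph{(i)}--\emph{(ii)} the resulting path $\GL_1$ is a Thurston geodesic from $X$ to $Y$ through $Z$. The geodesic $\GL_2$ is produced in the same way, with $\tau_1$ replaced by $\tau_2$.

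The two conclusions follow from curve-graph estimates in $\mathcal{W}$. Let $\beta$ be a curve of length at most the Bers constant at $Z|_{\mathcal{W}}$; since $\ell_Z(\alpha)$ is tiny, $\beta$ has bounded length at $Z$ in \s, and $\pi_{\calC(\mathcal{W})}(Z|_{\mathcal{W}})$ is within bounded $\calC(\mathcal{W})$-distance of $\beta$. By the Shadow Theorem of this paper, applied in $\calT(\mathcal{W})$, the shadow to $\calC(\mathcal{W})$ of the stretch ray toward $\nu_1$ is a reparametrized quasi-geodesic converging to $\nu_1\in\EL(\mathcal{W})$, so for $t_1$ large $\beta$ lies at $\calC(\mathcal{W})$-distance $R$ from $\pi_{\calC(\mathcal{W})}(X|_{\mathcal{W}})$, in the direction of $\nu_1$, with $R$ as large as we please. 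On the other hand, the $\calC(\mathcal{W})$-shadow of $\GL_2|_{\mathcal{W}}$ is built from shadows of stretch paths and so stays near the $\calC(\mathcal{W})$-geodesic from $\pi_{\calC(\mathcal{W})}(X|_{\mathcal{W}})$ toward $\nu_2$, which --- since $\nu_1\neq\nu_2$ in $\EL(\mathcal{W})$ --- stays $\calC(\mathcal{W})$-distance $\geq R-O(1)$ from $\beta$. As a curve that is $\calC(\mathcal{W})$-far from the short curves at a point is long there, and \s-lengths of curves supported in $\mathcal{W}$ agree up to a bounded factor with their lengths in the hyperbolic structure on $\mathcal{W}$ while $\ell(\alpha)$ is tiny, we obtain $\ell_Q(\beta)\geq e^{c(R-O(1))}$ for every $Q\in\GL_2$ and $\ell_X(\beta)\geq e^{cR}$, for some fixed $c>0$. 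Hence $\dL(Z,Q)\geq\log\big(\ell_Q(\beta)/\ell_Z(\beta)\big)\geq cR-O(1)$ for every $Q\in\GL_2$, and likewise $\dL(Z,X)\geq cR-O(1)$; taking $R$ large gives part (1). For part (2), if $P$ lies on a Thurston geodesic from $Y$ to $X$ then $\dL(P,X)\leq\dL(Y,X)=O(1)$, so $\dL(Z,X)\leq\dL(Z,P)+\dL(P,X)$ forces $\dL(Z,P)\geq cR-O(1)\geq D$. Choosing $t_1$ large enough and then $\ell_0=e^{-CT}$ for a large constant $C$ completes the argument.

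I expect the main obstacle to be the construction of the second paragraph. One must exhibit honest concatenations of stretch paths that keep $\alpha$ as a closed leaf of every directing lamination --- so that the geodesic property is automatic --- drive the $\mathcal{W}$-coordinate out toward $\nu_1$ (resp.\ $\nu_2$) and back, and nonetheless terminate both $\GL_1$ and $\GL_2$ at the single point $Y$ with the prescribed length and twist of $\alpha$. The first two requirements rest on fact \emph{(i)} together with the convergence of stretch rays toward minimal filling laminations; the last is a solvability statement, in which one checks that the length and twist of $\alpha$ at the terminal point of a short concatenation of stretch paths depend on enough of the free data (the directing laminations, their spiraling onto $\alpha$, and the individual leg lengths) to be prescribed independently while the $\mathcal{W}$-coordinate is returned to base. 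Everything else --- the collar-lemma estimates of the first paragraph, the comparison of lengths in \s and in $\mathcal{W}$ with geodesic boundary, and the $\calC(\mathcal{W})$-length estimates of the third --- is routine thin-part geometry.
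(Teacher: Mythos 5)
Your strategy is genuinely different from the paper's, and it founders exactly where you say it might: the second paragraph is not a construction but a wish list, and the two ``facts'' it rests on are the whole content of the theorem. Fact \emph{(i)} concerns stretch paths on the cut surface $\mathcal{W}$ whose boundary length is \emph{growing} like $e^{t}\ell_0$ (since you have made $\alpha$ the maximally stretched curve), so the $\mathcal{W}$--coordinate does not live in any fixed $\calT(\mathcal{W},\ell)$ and ``the Thurston stretch path of $\calT(\mathcal{W})$ directed by $\tau$'' is not yet a defined object; you would need the theory of the Thurston metric on surfaces with boundary of varying length to even state it. Worse, fact \emph{(ii)} hides a quantitative coincidence you never verify: for the concatenation $X\to Z\to Y$ to have total length $T=\dL(X,Y)$, the return leg must have length exactly $T-t_1$, which forces the Thurston distance in $\calT(\mathcal{W})$ from $Z|_{\mathcal{W}}$ to $Y|_{\mathcal{W}}$ to equal $T-t_1$ --- equivalently, $\partial\mathcal{W}=\alpha$ must be maximally stretched on the return leg as well, simultaneously with driving the interior of $\mathcal{W}$ back across a distance comparable to $t_1$ and matching the twist about $\alpha$ at the end. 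If the return distance exceeds $T-t_1$ there is no geodesic through $Z$ at all, and nothing in your write-up rules this out or produces the ``free parameters'' needed to hit $Y$ on the nose. Since every subsequent estimate presupposes that $\GL_1$ and $\GL_2$ exist as geodesics, the proof is incomplete at its core.

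The paper avoids all of this by never letting the separating curve be the stretched object. Proposition 6.1 doubles a stretch path of $\calT(S_{1,1})$ to produce a family $W_t\in\calT(S_{1,1},\ell_0)$ in which an \emph{interior} curve $\mu$ is maximally stretched while the boundary length is \emph{constant}; one can then glue $W_t$ to a fixed one-holed torus $V$ to get $\GL_1(t)=W_t\cup V$, and obtain $\GL_2$ as the concatenation of the two geodesics $X\to Z$ and $Z\to Y$ where $Z=W_{s/2}\cup D_\beta^nV$ differs from the midpoint of $\GL_1$ by a large Dehn twist in the complement. The geodesic property is then a direct check with Thurston's $\sup$ formula (every competing curve stretches by at most $e^{s/2}$ once $\log(1/\ep)\gg n$), and the separation is read off from the length of a single curve $\alpha\subset V$, with no curve-graph machinery, no shadow theorem, and no solvability problem. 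Your mechanism for separation --- distinct ending laminations $\nu_1\neq\nu_2$ in $\calC(\mathcal{W})$ rather than a Dehn twist --- is attractive and would, if the construction went through, give examples where the two geodesics diverge in subsurface-projection distance to a non-annular subsurface; but as written the third paragraph also quietly invokes \thmref{Shadow} for surfaces with boundary, which is not what the paper proves. If you want to salvage your approach, the first step is to prove a version of \propref{Fixed-Boundary} adapted to your setting, or to redo your construction with the roles reversed so that the curve separating the two dynamics has constant length.
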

  
  In view of these examples, one may ask whether geodesics connecting $X$
  to $Y$ have any common features. There is a mantra that all notions of a
  straight line in \Teich space behave the same way at the level of the
  curve graph. That is, the shadow of any such line to the curve graph is a
  \emph{reparametrized quasi-geodesic}. This has already been shown for
  \Teich geodesics \cite{MM99}, lines of minima \cite{CRS08}, grafting rays
  \cite{CDR12}, certain geodesics in the Weil-Petersson metric
  \cite{BMM11}, and Kleinian surface groups \cite{Min10}. (See also
  \cite{BF11} of an analogous result in Outer Space.) 
  
  In this paper, we show
  
  \begin{theorem} \label{Thm:Shadow}

    The shadow of a Thurston geodesic to the curve graph is a
    reparametrized quasi-geodesic. 

  \end{theorem}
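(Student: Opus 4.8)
The plan is to verify the criterion of Masur and Minsky \cite{MM99} (used similarly for lines of minima and grafting rays in \cite{CRS08, CDR12}): a coarse map $f\from[a,b]\to\cc$ is a reparametrized quasi-geodesic, with constants depending only on the topology of $\s$, provided there are $D,M>0$ such that (i) $\dS(f(t),f(t'))\le D$ whenever $|t-t'|\le1$, and (ii) $\diamS(f([s,u]))\le D$ whenever $[s,u]\subseteq[a,b]$ has $\dS(f(s),f(u))\le M$. Parametrize the Thurston geodesic $\GL$ by Thurston arclength, write $X_t$ for its point at time $t$, and let $f(t)$ be a systole of $X_t$. Since the systole has length bounded by a topological constant and any two curves of bounded hyperbolic length on a hyperbolic surface meet in a bounded number of points (collar lemma), such curves lie in a bounded-diameter subset of $\cc$; so $f$ is defined up to bounded ambiguity, and we call a curve \emph{short} on $X_t$ when its $X_t$-length is below a fixed topological threshold $L_0$, noting that all curves short on a common $X_t$ are uniformly close in $\cc$.

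Condition (i) is the soft part. For $s\le u$ the definition of the Thurston metric gives $\ell_{X_u}(\gamma)\le e^{\,u-s}\,\ell_{X_s}(\gamma)$ for every curve $\gamma$; the asymmetry of the metric is harmless here because we only ever push lengths forward along $\GL$. Hence if $|t-t'|\le1$ with $t\le t'$, then $f(t)$ has $X_{t'}$-length at most $e\,\ell_{X_t}(f(t))\le eL_0$ while $f(t')$ is short on $X_{t'}$, so both are short on $X_{t'}$ up to a bounded factor and $\dS(f(t),f(t'))$ is uniformly bounded; iterating gives $\dS(f(t),f(t'))\lesssim|t-t'|+1$ in general.

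Condition (ii) — no backtracking — is the heart of the argument, and the plan is to reduce it to the statement that a curve cannot be short, then long, then short again along $\GL$: more precisely, that the set of times at which a fixed curve $\gamma$ is short is, up to bounded-length gaps, a single interval, and that a curve short at time $t$ becomes, and stays, long — indeed far in $\cc$ from every curve that is short — once time has advanced by a definite amount. Granting these estimates, (ii) follows: if $\dS(f(s),f(u))\le M$ with $M$ small relative to the threshold, then no interior $f(t)$ can be far in $\cc$ from both $f(s)$ and $f(u)$, for such a curve would be short on $X_t$ while long on $X_s$ and $X_u$, contradicting the connectedness of its short-interval, and the distances of $X_s,X_u$ along $\GL$ would be forced to be large.

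The required length estimates must be extracted from the geometry of stretch maps. Along a single stretch path directed by a complete lamination $\mu$, Thurston's description — the horocyclic foliation transverse to $\mu$ is scaled by $e^{-t}$ while the structure along $\mu$ is fixed — should give, up to uniform multiplicative error, a description of $\ell_{X_t}(\gamma)$ as built from a non-decreasing contribution of the part of $\gamma$ crossing $\mu$ together with a contribution comparable to $e^{-t}$ from its horocyclic part; being a coarse maximum of an increasing and a decreasing quantity, this has a connected ``short-interval.'' The main obstacle I anticipate is gluing these estimates across the concatenation of stretch paths forming $\GL$: stretching along a new lamination can a priori shorten a curve that had grown long, so the conclusion must use that $\GL$ is a genuine Thurston geodesic — that is, the efficiency of Thurston's concatenation and the forced interaction of consecutive stretch laminations — to prevent a curve from shrinking back into the thin part once it has left it. A further point, exactly as in the Teichm\"uller case \cite{MM99}, is to upgrade ``the shadow does not revisit short curves'' to the lower bound that actually makes $f$ a \emph{quasi}-geodesic and not merely non-backtracking; here the needed input is that curves short at widely separated times have large intersection number and hence are far apart in $\cc$, which is the role played in \cite{MM99} by the convexity of extremal length and the balance-time estimates, and for which the stretch-map length description above is the intended replacement.
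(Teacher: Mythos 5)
Your reduction rests on the claim that conditions (i) and (ii) --- coarse continuity together with ``if $\dS(f(s),f(u))\le M$ then $\diamS\big(f([s,u])\big)\le D$'' --- suffice for a path in a Gromov hyperbolic space to be a reparametrized quasi-geodesic. That criterion is false, and it is not the criterion of \cite{MM99}. A horocyclic segment in $\HH$ parametrized by arc length satisfies both conditions: consecutive points are uniformly close, and two points at distance at most $M$ subtend a horocyclic arc whose image has diameter at most $M$; yet the midpoint of a horocyclic segment of length $L$ lies at distance roughly $\log L$ from the geodesic joining its endpoints, so these paths are not uniformly unparametrized quasi-geodesics. What is actually needed is a quantitative progress statement --- a coarse reverse triangle inequality, or, as in \cite{MM99} and in this paper, a coarse Lipschitz retraction from all of \cc onto the shadow (\thmref{Retraction}). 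Your condition (ii) cannot deliver that, and your final paragraph concedes that the lower bound ``is the role played in \cite{MM99} by the convexity of extremal length,'' for which you offer only an ``intended replacement.'' That replacement is the entire content of Sections 4 and 5 of the paper.

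There are two further problems with the proposed route to (ii). First, an arbitrary Thurston geodesic is not given to you as a concatenation of stretch paths directed by explicit laminations, so estimates extracted from the horocyclic foliation of a single stretch path do not apply; the paper's argument uses only the defining property that $\ell_{X_t}(\lambda_{\GL})$ grows exactly like $e^{t}$ while every other length grows at most that fast. Second, the qualitative persistence you take for granted fails in the naive geometric sense: \exaref{Shear} exhibits a curve that fellow-travels the maximally stretched lamination in $X_s$ yet meets it nearly orthogonally at a later time, and the length function along a stretch path can decrease super-exponentially before growing back. This is precisely why the paper introduces the combined topological-and-geometric notion of a sufficiently horizontal curve (\defref{Horizontal}), proves its persistence with exponential growth of horizontal length (\thmref{Horizontal}), and builds the retraction out of balanced times. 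Without an argument of that kind, both halves of your plan --- the no-backtracking statement and the progress lower bound --- remain unproved, and the criterion they are meant to feed into is itself insufficient.
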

  
  Since the curve graph is Gromov hyperbolic \cite{MM99}, quasi-geodesics with 
  common endpoints fellow travel. Hence:
  
  \begin{corollary} \label{Cor:Shadow}

    The shadow to the curve graph of different Thurston geodesics connecting 
    $X$ to $Y$  fellow travel each other. 

  \end{corollary}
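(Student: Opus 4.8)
The plan is to obtain \corref{Shadow} as a formal consequence of \thmref{Shadow} together with the Gromov hyperbolicity of the curve graph, so that no new work is needed beyond bookkeeping of constants. Fix points $X, Y \in \T$ and two Thurston geodesics $\GL_1, \GL_2$ joining them. For $i = 1,2$ let $\beta_i$ denote the shadow of $\GL_i$: at each parameter $t$, $\beta_i(t)$ is a shortest curve on the hyperbolic surface $\GL_i(t)$ (equivalently, up to bounded error in \cc, any curve of bounded hyperbolic length). By \thmref{Shadow}, each $\beta_i$ is a reparametrized quasi-geodesic with constants $(K, C)$ depending only on the topological type of \s; that is, there is a coarsely monotone reparametrization $\rho_i$ after which $\gamma_i := \beta_i \circ \rho_i$ is a genuine $(K, C)$-quasi-geodesic segment in \cc.

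Next I would check that $\gamma_1$ and $\gamma_2$ have coarsely matching endpoints. The initial surface along each $\GL_i$ is $X$ and the terminal surface is $Y$, so the first curve recorded by $\beta_i$ is short on $X$ and the last is short on $Y$. Any two curves that are simultaneously short (below the Margulis, or Bers, constant) on a fixed hyperbolic surface have intersection number bounded in terms of \s, hence are within bounded distance in \cc. Consequently the starting points of $\gamma_1$ and $\gamma_2$ lie within a distance $\delta = \delta(\s)$ of one another, and likewise for their terminal points.

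Finally, since \cc is $\delta_0$-hyperbolic by Masur--Minsky \cite{MM99}, I would invoke the stability of quasi-geodesics (the Morse lemma): two $(K, C)$-quasi-geodesics in a $\delta_0$-hyperbolic space whose endpoints are within distance $\delta$ of each other have images within Hausdorff distance $R = R(K, C, \delta, \delta_0)$. Applying this to $\gamma_1$ and $\gamma_2$, and noting that reparametrization does not change the image, shows that the shadows of $\GL_1$ and $\GL_2$ lie within uniformly bounded Hausdorff distance in \cc; matching up the two monotone reparametrizations then upgrades this to a synchronized fellow-traveling. There is no substantial obstacle here --- all the content resides in \thmref{Shadow} --- and the only point to watch is that the quasi-geodesic constants and the endpoint bound $\delta$ depend on \s alone, so that $R$ is likewise uniform over all pairs of Thurston geodesics between $X$ and $Y$.
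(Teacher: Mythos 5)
Your proposal is correct and is essentially the paper's argument: the authors deduce \corref{Shadow} in one line from \thmref{Shadow}, the coarse agreement of endpoints, and stability of quasi-geodesics in the Gromov hyperbolic graph \cc. Your additional care about reparametrizations and uniformity of constants is sound but adds nothing beyond what the paper intends.
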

  
  This builds on the analogy established in \cite{LRT12} between \Teich
  geodesics and Thurston geodesics. We showed that if the \Teich geodesic
  connecting $X$ and $Y$ stays in the thick part of \Teich space, so does
  any Thurston geodesic connecting $X$ to $Y$ and in fact all these paths
  fellow travel each other. However, this analogy does not extend much
  further; we show that the converse of the above statement is not true:

  \begin{theorem} \label{Thm:Not-Short}

    There is an $\ep_0>0$ such that, for every $\ep>0$, there are points
    $X,Y \in \T$ and a Thurston geodesic connecting $X$ to $Y$ that stays
    in the $\ep_0$--thick part of \Teich space whereas the associated
    \Teich geodesic connecting $X$ to $Y$ does not stay in the $\ep$--thick
    part of \Teich space. 

  \end{theorem}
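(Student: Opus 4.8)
\smallskip
\noindent\textit{Proof sketch (plan).}
The plan is to build the example from a pseudo-Anosov map supported on a proper subsurface. Fix a proper essential non-annular subsurface $W$ of $\s$ (we take $\s$ to have complexity at least two so that such $W$ exists), let $\alpha$ be one of its boundary curves, and let $\phi$ be a pseudo-Anosov homeomorphism supported on $W$ --- equal to the identity outside $W$, fixing $\alpha$ --- with dilatation $K>1$. Recall that $\phi$ acts on $(\T,\dL)$ with stable translation length $\log K$ and that, as follows from Thurston's analysis of stretch paths \cite{Thu86a}, this translation length is realized along a $\phi$--invariant geodesic line $\GL$, the \emph{axis} of $\phi$, obtained by completing the unstable lamination of $\phi$ to a maximal lamination and passing to the associated stretch line. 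Fix a point $X\in\GL$, and for an integer $N$ to be chosen set $Y=\phi^N(X)\in\GL$; the segment of $\GL$ from $X$ to $Y$ is then a Thurston geodesic, and we will show that it stays uniformly thick while the Teichm\"uller geodesic from $X$ to $Y$ goes arbitrarily deep into the thin part as $N\to\infty$. (Informally: the minimal displacement of $\phi$ is attained by a thick geodesic for the Thurston metric, but only by paths plunging into the $\alpha$--thin part for the Teichm\"uller metric.)

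\emph{The Thurston side.} Since $\phi$ is a $\dL$--isometry carrying $\GL$ onto itself by a translation of length $\log K$, the systole function $t\mapsto\operatorname{sys}\big(\GL(t)\big)$ --- the length of the shortest closed geodesic on $\GL(t)$ --- is continuous and $(\log K)$--periodic along the axis, hence bounded below by a constant $\ep_0=\ep_0(\phi,X)>0$. Thus $\GL$ stays in the $\ep_0$--thick part of $\T$, and, crucially, $\ep_0$ depends only on $\phi$ and $X$ and not on $N$.

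\emph{The Teichm\"uller side.} Here $\alpha$ is forced to become short along the Teichm\"uller geodesic because of a large subsurface coefficient at $W$. Since $\phi$ is supported on $W$, its restriction $\phi|_W$ is pseudo-Anosov and so acts loxodromically on the (arc and) curve graph of $W$ \cite{MM99}; applying this to a short marking of $X$ gives $d_W(X,Y)\ge cN$ for some $c>0$ and all large $N$. As $\alpha$ is a boundary component of the non-annular subsurface $W$, Rafi's characterization of the short curves along a Teichm\"uller geodesic then produces a point $Z$ on the geodesic from $X$ to $Y$ at which the extremal length, hence the hyperbolic length, of $\alpha$ is as small as we please, provided $d_W(X,Y)$ --- equivalently $N$ --- is large enough. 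Consequently, given $\ep>0$ we choose $N=N(\ep)$ large enough that $\alpha$ has length $<\ep$ somewhere on the Teichm\"uller geodesic from $X$ to $Y$. The points $X$ and $Y=\phi^N(X)$ then witness the theorem, with $\ep_0$ the fixed thickness constant of the axis of $\phi$.

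\emph{The main obstacle.} The step that needs genuine care is the single input on the Thurston side: one must verify that a pseudo-Anosov supported on $W$ really does admit a $\phi$--invariant Thurston geodesic line --- equivalently, that its stable translation length $\log K$ is attained --- and, for the periodicity argument, that the completion of the unstable lamination to a maximal lamination can be made $\phi$--equivariant (so that the axis is an honest stretch line, or at least a $\phi$--periodic concatenation of stretch paths). This is exactly where Thurston's structure theory of stretch maps \cite{Thu86a} is used. The remaining ingredients are standard: the bound $d_W(X,\phi^NX)\ge cN$ is the loxodromicity of pseudo-Anosov maps on curve graphs \cite{MM99}, the implication ``large subsurface coefficient at $W$ $\Rightarrow$ a boundary curve of $W$ becomes short along the Teichm\"uller geodesic'' is Rafi's theorem, and the periodicity argument on the Thurston side is elementary.
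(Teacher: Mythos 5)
Your Teichm\"uller-side argument is fine and is essentially the paper's: $d_W(X,\phi^N X)\succ N$ by loxodromicity of $\phi|_W$ on $\calC(W)$, and Rafi's theorem \cite{Raf05} then makes $\partial W$ arbitrarily short somewhere on the Teichm\"uller geodesic. But the step you yourself flag as ``the main obstacle'' is a genuine gap, and it is in fact the entire content of the paper's construction. Thurston's theory in \cite{Thu86a} does not provide an invariant Thurston geodesic for a \emph{reducible} mapping class $\phi$ that is pseudo-Anosov on a proper subsurface $W$ and the identity outside. What you need is a geodesic line in \T along which the maximally stretched lamination is confined to $W$ while every curve disjoint from $W$ (including $\partial W$) keeps uniformly bounded length. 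A priori the supremum in Thurston's length-ratio formula for $\Lip(X,\phi^N X)$ could be approached by curves crossing $\partial W$, the lamination $\lambda(X,\phi^N X)$ need not be contained in $W$, and there is no general theorem that the stable translation length of a reducible class is realized along an invariant line. Moreover, completing the unstable lamination of $\phi|_W$ to a maximal lamination of \s forces the added leaves to exit $W$, and the resulting stretch line will in general distort $\partial W$ and the complementary subsurface; so there is no axis on which to run your periodic-systole argument, and no $\phi$--equivariant completion to appeal to.

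The paper resolves exactly this point by hand. \propref{Fixed-Boundary} doubles a stretch path on the once-punctured torus across a reflection to produce a stretch line on a genus-two surface in which the separating curve $\gamma=\partial W$ has \emph{constant} length $\ell_0$ and the complementary one-holed torus $V$ is isometrically unchanged; one then still has to verify (as the paper does) that the glued path $W_t\cup V$ is a genuine Thurston geodesic of the closed surface, i.e.\ that no curve stretches faster than $\mu$. Thickness is inherited from thickness of the punctured-torus geodesic (via \cite{LRT12}), the fixed $\ell_0$, and the fixed thick $V$ --- not from periodicity. To repair your write-up you must either reproduce such an explicit construction or actually prove that your partial pseudo-Anosov admits an invariant Thurston geodesic with stretch locus in $W$ along which the complement stays thick; neither follows from the references you cite.
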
  

  In particular, this means that the set of short curves along a \Teich
  geodesic and a Thurston geodesic are not the same. 
  
  \subsection*{Outline of the proof} 
  
  To prove \thmref{Shadow} one needs a suitable definition for when a curve
  is \emph{sufficiently horizontal} along a Thurston geodesic. This is in
  analogy with both the study of \Teich geodesics and geodesics in Outer
  space after \cite{MM99, BF11}. In the \Teich metric, geodesics are
  described by a quadratic differential, which in turn defines a singular
  flat structure on a Riemann surface. The flat metric is then deformed by
  stretching the horizontal foliation and contracting the vertical
  foliation of the flat surface. If a curve is not completely vertical,
  then its horizontal length grows exponentially fast along the \Teich
  geodesic. Similarly, for Outer space, the geodesics are described as
  folding paths associated to train-tracks. If an immersed curve has a
  sufficiently long \emph{legal segment} at a point along a folding path,
  then the length of the horizontal segment grows exponentially fast along
  the folding path. These two facts respectively play important roles in
  the proofs of the hyperbolicity of curve complexes and free factor
  complexes.

  The notion of horizontal foliation in the setting of \Teich geodesics is
  replaced by the \emph{maximally stretched lamination} in the setting of
  Thurston geodesics (see \subsecref{metric}). However, it is possible for
  a curve $\alpha$ on the surface to fellow travel the maximally stretched
  lamination $\lambda$ for a long time only to have its length go down
  later along the Thurston geodesic. That is, the property of fellow
  traveling $\lambda$ geometrically does not persist (see \exaref{Shear}).

  In \secref{Horizontal}, we define the notion of a curve $\alpha$ being
  \emph{sufficiently horizontal} along a Thurston geodesic to mean that
  $\alpha$ fellow travels $\lambda$ for sufficiently long time both
  topologically and geometrically. We show that if a curve is sufficiently
  horizontal at a point along a Thurston geodesic, then it remains
  sufficiently horizontal throughout the geodesic, with exponential growth
  of the length of its horizontal segment (\thmref{Horizontal}). In
  \secref{Shadow}, we define a projection map from the curve complex to the
  shadow of a Thurston geodesic sending a curve $\alpha$ first to the
  earliest time in the Thurston geodesic where $\alpha$ is sufficiently
  horizontal and then to a curve of bounded length at that point. We show
  in \thmref{Retraction} that this map is a coarse Lipschitz retraction.
  \thmref{Shadow} follows from this fact using a standard argument. In
  \secref{Examples}, we construct the examples of Thurston geodesics that
  illustrate the deviant behaviors of Thurston geodesics from \Teich
  geodesics, as indicated by \thmref{Bad-Geodesic} and \thmref{Not-Short}.
 
  The proof is somewhat technical, because all we know about a Thurston
  geodesic is that the length of the maximally stretched lamination (which
  may not be a filling lamination) is growing exponentially. Using this and
  some delicate hyperbolic geometry arguments, we are able to control the
  geometry of the surface. For the ease of exposition, we have collected
  several technical lemmas in \secref{Hyperbolic}. These statements should
  be intuitively clear to a reader familiar with hyperbolic geometry and
  the proofs can be skipped in the first reading of the paper.
  \secref{Examples} is also less technical and can be read independently
  from the rest of the paper.
 
  \subsection*{Acknowledgement} 

  Our Key \propref{Balanced} is modeled after \cite[Proposition 6.4]{BF11}
  where Bestvina-Feighn show the projection of a folding path to the free
  factor graph is a reparametrized quasi-geodesic which is in turn inspired
  by the arguments of Masur-Minsky \cite{MM99}. We would like to thank
  Universit\'e de Rennes and Erwin Schr\"odinger International Institute
  for Mathematical Physics for their hospitality. We also thank the referee
  for carefully reading the paper and providing us all the useful comments. 
  
\section{Background}
  
  \label{Sec:Background}

  We briefly review some background material needed for this paper. We
  refer to \cite{Thu86a, Pap07, Hub06} and the references therein for
  background on hyperbolic surfaces and the Thurston metric on \Teich
  space. 
  
  \subsection{Notation}
  
  We adopt the following notation to simplify some calculations. Call a
  constant $C$ \emph{universal} if it depends only on the topological type
  of a surface, and not on a hyperbolic metric on the surface. Then given a
  universal constant $C$ and two quantities $a$ and $b$, we write
  \begin{itemize} \item $a \lmul b$ if $a \le Cb$. \item $a \emul b$ if $a
  \lmul b$ and $b \lmul a$. \item $a \ladd b$ if $a \le b +C$. \item $a
  \eadd b$ if $a \ladd b$ and $b \ladd a$. \item $a \prec b$ if $a \le
  Cb+C$. \item $a \asymp b$ if $a \prec b$ and $b \prec a$. \end{itemize}
  We will also write $a = O(1)$ to mean $a \lmul 1$.   

  \subsection{Coarse maps}

  Given two metric spaces $\calX$ and $\calY$, a multivalued map $f \colon
  \calX \to \calY$ is called a \emph{coarse map} if the image of every
  point has uniformly bounded diameter. The map $f$ is \emph{(coarsely)
  Lipschitz} if $d_\calY \big( f(x),f(y) \big) \prec d_\calX(x,y)$ for all
  $x,y \in \calX$, where \[ d_\calY \big( f(x), f(y) \big) = \diam_\calY
  \big( f(x) \cup f(y) \big).\] Given a subset $\calA \subset \calX$, a
  coarse Lipschitz map $f : \calX \to \calA$ is a \emph{coarse retraction}
  if $d_\calX \big(a, f (a) \big) =O(1)$ for all $a \in \calA$.   
  
  \subsection{Curve graph}

  Let \s be a connected oriented surface of genus $g$ with $p$ punctures
  with $3g+p-4 \ge 0$. By a curve on \s we will mean an essential simple
  closed curve up to free homotopy. \emph{Essential} means the curve is not
  homotopic to a point or a puncture of \s. For two curves $\alpha$ and
  $\beta$, let $\I(\alpha,\beta)$ be the minimal intersection number
  between the representatives of $\alpha$ and $\beta$. Two distinct curves
  are disjoint if their intersection number is 0. A \emph{multicurve} on \s
  is a collection of pairwise disjoint curves. A \emph{pair of pants} is
  homeomorphic to a thrice-punctured sphere. A \emph{pants decomposition}
  on \s is a multicurve whose complement in \s is a disjoint union of pairs
  of pants.
  
  We define the \emph{curve graph} \cc of \s as introduced by Harvey
  \cite{Har81}. The vertices of $\cc$ are curves on \s, and two curves span
  an edge if they intersect minimally on \s. For a surface with $3g+p-4 >
  0$, the minimal intersection number is 0; for the once-punctured torus,
  the minimal intersection number is 1; and for the four-times punctured
  sphere, the minimal intersection number is 2. The curve graph of a pair
  of pants is empty since there are no essential curves. By an element or a
  subset of \cc, we will always mean a vertex or a subset of the vertices
  of \cc.   

  Assigning each edge of \cc to have length 1 endows \cc with a metric
  structure. Let $\dS(\param,\param)$ be the induced path metric on $\cc$.
  The following fact will be useful for bounding curve graph distances
  \cite{Sch06}: for any $\alpha, \beta \in \cc$, \begin{equation}
  \label{Eqn:Intersection} \dS(\alpha,\beta) \le \log_2 \I(\alpha,
  \beta)+1. \end{equation}
  
  By \cite{MM99}, for any surface \s, the graph $\cc$ is hyperbolic in the
  sense of Gromov. More recently, it was shown contemporaneously and
  independently by \cite{Aou13, Bow13, HPW13, CRS13} that there is a
  uniform $\delta$ such that $\cc$ is $\delta$--hyperbolic for all \s. 
 
  \subsection{\Teich space}
  
  A \emph{marked} hyperbolic surface is a complete finite-area hyperbolic
  surface equipped with a fixed homeomorphism from \s. Two marked
  hyperbolic surfaces $X$ and $Y$ are considered equivalent if there is an
  isometry from $X$ to $Y$ in the correct homotopy class. The collection of
  equivalence classes of all marked hyperbolic surfaces is called the
  \Teich space \T of \s. This space \T equipped with its natural topology
  is homeomorphic to $\RR^{6g-6+2p}$.
 
  \subsection{Short curves and collars} \label{Subsec:collar}

  Given $X \in \T$ and a simple geodesic $\omega$ on $X$, let
  $\ell_X(\omega)$ be the arc length of $\omega$. Since $X$ is marked by a
  homeomorphism to \s, its set of curves is identified with the set of
  curves on \s. For a curve $\alpha$ on \s, let $\ell_X(\alpha) =
  \ell_X(\alpha^*)$, where $\alpha^*$ is the geodesic representative of
  $\alpha$ on $X$. A curve is called a \emph{systole} of $X$ if its
  hyperbolic length is minimal among all curves. Given a constant $C$, a
  multicurve on $X$ is called $C$--short if the length of every curve in
  the set is bounded above by $C$. The \emph{Bers constant}
  $\ep_B=\ep_B(\s)$ is the smallest constant such that every hyperbolic
  surface $X$ admits an $\ep_B$--short pants decomposition. In most
  situations, we will assume a curve or multicurve is realized by geodesics
  on $X$. 
  
  We state the well-known Collar Lemma with some additional properties (see
  \cite[\S 3.8]{Hub06}).
  
  \begin{lemma}[Collar Lemma] \label{Lem:Collar} 

    Let $X$ be a hyperbolic surface. For any simple closed geodesic
    $\alpha$ on $X$, the regular neighborhood about $\alpha$    \[
    U(\alpha) = \left\{ p \in X \st d_X(p,\alpha) \le  \sinh^{-1}
    \frac{1}{\sinh \big( .5 \ell_X(\alpha) \big)} \right\} \] is an
    embedded annulus. If two simple closed geodesics $\alpha$ and $\beta$
    are disjoint, then $U(\alpha)$ and $U(\beta)$ are disjoint. Moreover,
    given a simple closed geodesic $\alpha$ and  a simple geodesic $\omega$
    (not necessarily closed, but complete), if $\omega$ does not intersect
    $\alpha$ and does not spiral towards $\alpha$, then it is disjoint from
    $U(\alpha)$.

  \end{lemma}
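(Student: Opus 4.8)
The plan is to lift everything to the universal cover \(\widetilde X=\HH^2\) and to exploit that a metric neighborhood \(N_w(L)=\{x:d_{\HH^2}(x,L)\le w\}\) of a geodesic \(L\subset\HH^2\) is convex, being the region between the two curves equidistant to \(L\) at distance \(w\). Fix a lift \(\talpha\) of the simple closed geodesic \(\alpha\), let \(g\in\pi_1(X)\) be the primitive hyperbolic element with axis \(\talpha\) and translation length \(\ell=\ell_X(\alpha)\), and set \(w(\alpha)=\sinh^{-1}\!\bigl(1/\sinh(\ell/2)\bigr)\). Then \(N_{w(\alpha)}(\talpha)\) is \(\langle g\rangle\)--invariant, the quotient \(N_{w(\alpha)}(\talpha)/\langle g\rangle\) is an annulus, and \(U(\alpha)\) is its image under the covering \(\HH^2/\langle g\rangle\to X\). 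This covering is injective on that annulus---so that \(U(\alpha)\) is an embedded annulus---exactly when no two distinct \(\pi_1(X)\)--translates of \(N_{w(\alpha)}(\talpha)\) have overlapping interiors, i.e.\ when \(d_{\HH^2}(\talpha,h\talpha)\ge 2\,w(\alpha)\) for every \(h\) with \(h\talpha\ne\talpha\). Likewise, for disjoint curves \(\alpha,\beta\), disjointness of \(U(\alpha)\) and \(U(\beta)\) reduces to \(d_{\HH^2}(\talpha,\tbeta)\ge w(\alpha)+w(\beta)\) for all lifts \(\talpha,\tbeta\).

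Both bounds are instances of one statement, provable by classical hyperbolic trigonometry: if \(L_1,L_2\subset\HH^2\) are disjoint and non-asymptotic, \(g_i\) is a hyperbolic translation along \(L_i\) of translation length \(\ell_i\), and \(g_1L_2\) is disjoint from \(L_2\) while \(g_2L_1\) is disjoint from \(L_1\), then
\[
  d_{\HH^2}(L_1,L_2)\;\ge\;\sinh^{-1}\!\frac{1}{\sinh(\ell_1/2)}+\sinh^{-1}\!\frac{1}{\sinh(\ell_2/2)} .
\]
Its hypotheses hold in our setting: two distinct lifts of a simple closed geodesic can neither cross (a crossing would project to a self-intersection) nor share an endpoint at infinity (in a discrete group, two hyperbolic elements with one common fixed point have the same axis); and when \(h\talpha\ne\talpha\) each of \(h\talpha\), \(gh\talpha\), \((hgh^{-1})\talpha\) is again a lift of \(\alpha\) distinct from \(\talpha\) (because a maximal cyclic subgroup of \(\pi_1(X)\) is self-normalizing), hence disjoint from \(\talpha\); the two-curve case is analogous. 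For the inequality itself one takes the common perpendicular \(\sigma\) of \(L_1\) and \(L_2\), translates it along \(L_1\) by \(g_1\) and along \(L_2\) by \(g_2\), and extracts the estimate from the resulting Saccheri/Lambert quadrilaterals---equivalently, from the pair-of-pants distance formulas---using \(\cosh w(\alpha)=\coth(\ell/2)\), \(\sinh w(\alpha)=1/\sinh(\ell/2)\), and \(\sinh\bigl(w(\alpha)+w(\beta)\bigr)=\bigl(\cosh(\ell_\alpha/2)+\cosh(\ell_\beta/2)\bigr)\big/\bigl(\sinh(\ell_\alpha/2)\sinh(\ell_\beta/2)\bigr)\). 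This trigonometry, together with the bookkeeping needed to exclude unwanted crossings, is the most delicate point, but it is entirely standard---it is precisely the Collar Lemma as established in \cite{Hub06}.

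For the last assertion, suppose toward a contradiction that the complete simple geodesic \(\omega\)---disjoint from \(\alpha\), so \(\omega\ne\alpha\), and not spiraling onto \(\alpha\)---meets the \emph{embedded} annulus \(U(\alpha)\). Each component of \(\omega\cap U(\alpha)\) is a properly embedded geodesic arc: it is not a closed geodesic (that would force \(\omega=\alpha\)) and not bi-infinite (a geodesic ray contained in the compact annulus \(\overline{U(\alpha)}\) must spiral onto \(\alpha\)). Such an arc cannot join the two boundary circles of \(U(\alpha)\), since these are separated inside \(U(\alpha)\) by the core \(\alpha\), which \(\omega\) does not cross. So some component \(c\) has both endpoints on one boundary circle; being inessential in the annulus, \(c\) together with a sub-arc \(c'\) of that circle bounds an embedded disk \(D\subset U(\alpha)\), and \(D\) is disjoint from \(\alpha\) (otherwise \(\alpha\) would be null-homotopic in \(U(\alpha)\)). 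Applying Gauss--Bonnet to \(D\)---curvature \(-1\); geodesic side \(c\) contributing nothing; side \(c'\subset\partial U(\alpha)\) whose geodesic curvature as seen from \(D\) is \(-\tanh w(\alpha)<0\); two corners with interior angles \(\theta_1,\theta_2\in(0,\pi)\)---gives
\[
  (\pi-\theta_1)+(\pi-\theta_2)\;=\;2\pi+\operatorname{Area}(D)+\tanh w(\alpha)\cdot\operatorname{length}(c') ,
\]
whose left-hand side is strictly below \(2\pi\) while its right-hand side exceeds \(2\pi\): a contradiction. Hence \(\omega\) is disjoint from \(U(\alpha)\). (This last step uses only that \(U(\alpha)\) is an embedded annulus, not the particular value of \(w(\alpha)\).)
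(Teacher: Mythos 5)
The paper offers no proof of this lemma---it simply cites \cite[\S 3.8]{Hub06}---so your write-up is being judged on its own merits. Your treatment of the first two assertions is fine: the reduction to the universal cover, the identification of embeddedness with $d_{\HH}(\talpha,h\talpha)\ge 2w(\alpha)$ for $h\talpha\neq\talpha$, and the delegation of the quadrilateral trigonometry to the standard references is exactly the classical argument.

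The proof of the third assertion, however, has a genuine error: the sign of the geodesic curvature of $c'$ in the Gauss--Bonnet computation is wrong. The boundary circle $\partial U(\alpha)$ is a hypercycle, and a hypercycle curves \emph{towards} its geodesic (in the disk model it is a circular arc sharing ideal endpoints with the core geodesic, whose Euclidean center lies on the far side of that geodesic). Since your disk $D$ lies on the $\alpha$--side of $c'$, the curve $c'$ bends \emph{into} $D$, so its geodesic curvature relative to $D$ is $+\tanh w(\alpha)$, not $-\tanh w(\alpha)$. Gauss--Bonnet then reads
\[
  (\pi-\theta_1)+(\pi-\theta_2)\;=\;2\pi+\operatorname{Area}(D)-\tanh w(\alpha)\cdot\operatorname{length}(c'),
\]
which yields only $\operatorname{Area}(D)<\tanh w(\alpha)\cdot\operatorname{length}(c')$ and no contradiction. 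Indeed there cannot be a contradiction from this argument: a geodesic chord with both endpoints on an equidistant curve, cutting off a lens-shaped disk on the side of the core, is a perfectly realizable configuration in $\HH^2$. Your closing parenthetical---that the step uses only embeddedness of the annulus and not the value of $w(\alpha)$---should have been a warning: whether a complete simple geodesic disjoint from $\alpha$ can penetrate an embedded annular neighborhood is a quantitative question about the width, not a topological one. The correct argument for this part is the same trigonometry as before: for a lift $\tomega$ and the primitive deck transformation $g$ with axis $\talpha$, simplicity of $\omega$, disjointness from $\alpha$, and the no-spiraling hypothesis guarantee that $\tomega$ and $g\tomega$ are disjoint and non-asymptotic, and the Lambert-quadrilateral estimate applied to the common perpendicular from $\tomega$ to $\talpha$ and its $g$--translate then forces $d_{\HH}(\tomega,\talpha)\ge \sinh^{-1}\bigl(1/\sinh(\ell_X(\alpha)/2)\bigr)$.
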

 
  We will refer to $U(\alpha)$ as \emph{the standard collar of $\alpha$.}
  There is a universal upper and lower bound on the arc length of the
  boundary of $U(\alpha)$ provided that $\alpha$ is $\ep_B$--short. 

  Using the convention \begin{equation}\label{Eqn:Log} \log(x) =
  \begin{cases} \ln(x) & \text{if $x \ge e$} \\ 1 & \text{if $x\leq e$},
  \end{cases} \end{equation} we note that, for $0 \le x \le \ep_B$,   \[
  \sinh^{-1} \big(1/ \sinh(.5 x) \big) \eadd \log(1/x). \] 
  
  A consequence of the Collar Lemma is the existence of a universal
  constant $\delta_B$ such that, if a curve $\beta$ intersects an
  $\ep_B$--short curve $\alpha$, then \[ \ell_X(\beta)\geq
  \I(\alpha,\beta) \delta_B. \] Also for any geodesic segment $\omega$, \[
  \ell_X(\omega) \ge (\I(\alpha, \omega)-1) \delta_B. \] We will refer to
  $\delta_B$ as the \textit{dual constant} to the Bers constant $\ep_B$. 

  \subsection{Various notions of twisting}
  
  In this section, we will define several notions of \emph{relative
  twisting} of two objects or structures about a simple closed curve
  $\gamma$. The notation will always be $\twist_\gamma(\param,\param)$. 
  
  First suppose $A$ is a compact annulus and $\gamma$ is the core curve of
  $A$. Given two simple arcs $\eta$ and $\omega$ with endpoints on the
  boundary of $A$, we define \[ \twist_\gamma(\eta,\omega) =
  \I(\eta,\omega), \] where $\I(\eta,\omega)$ is the minimal number of
  interior intersections between isotopy classes of $\eta$ and $\omega$
  fixing the endpoints pointwise.   

  Now suppose $\gamma$ is a curve in \s. The annular cover $\hat A$ of \s
  corresponding to $\langle \gamma \rangle < \pi_1(\s)$ can be compactified
  in an intrinsic way. Let $\hat \gamma$ be the core curve of $\hat A$.
  Given two simple geodesics or curves $\eta$ and $\omega$ in \s, let
  $\hat{\eta}$ and $\hat{\omega}$ be any lifts to $\hat{A}$ that join the
  boundary of $\hat A$ (such lifts exist when $\eta$ and $\omega$ intersect
  $\gamma$). The \emph{relative twisting} of $\eta$ and $\omega$ about
  $\gamma$ is \[ \twist_\gamma(\eta,\omega) = \twist_{\hat \gamma}(\hat
  \eta, \hat \omega). \] This definition is well defined up to an additive
  error of $1$ with different choices of $\hat \eta$ and $\hat \omega$.
 
  Now suppose $X \in \T$ and let $\omega$ be a geodesic arc or curve in
  $X$. We want to measure the number of times $\omega$ twists about
  $\gamma$ in $X$. To do this, represent $\gamma$ by a geodesic and lift
  the hyperbolic metric of $X$ to the annular cover $\hat A$. Let $\hat
  \tau$ be any geodesic perpendicular to $\hat \gamma$ joining the boundary
  of $\hat A$. We define the twist of $\omega$ about $\gamma$ on $X$ to be
  \[ \twist_\gamma(\omega, X) = \I(\hat \omega, \hat \tau), \] where $\hat
  \omega$ is any lift of $\omega$ joining the boundary of $\hat A$. Since
  there may be other choices of $\hat\tau$, this notion is well defined up
  to an additive error of at most one. Note that if
  $\twist_\gamma(\omega,X) = 0$ and $\twist_\gamma (\eta,\omega) = n$, then
  $\twist_\gamma(\eta,X) \eadd n$.
  
  When $\gamma$ is $\ep_B$--short, fix a perpendicular arc $\tau$ to the
  standard collar $U(\gamma)$, then the quantity $\I(\omega, \tau)$ differs
  from $\twist_\gamma(\omega,X)$ by at most one \cite[Lemma 3.1]{Min96a}. 
  
  Given $X, Y \in \T$, the relative twisting of $X$ and $Y$ about $\gamma$
  is \[ \twist_\gamma(X,Y) = \I(\hat \tau_X, \hat \tau_Y),\] where $\hat
  \tau_X$ is an arc perpendicular to $\hat \gamma$ in the metric $X$, and
  $\hat \tau_Y$ is an arc perpendicular to $\hat \gamma$ in the metric $Y$.
  Again, choosing different perpendicular arcs changes this quantity by at
  most one.
  
  \subsection{Subsurface projection and bounded combinatorics}

  Let $\Sigma \subset \s$ be a compact and connected subsurface such that
  each boundary component of $\Sigma$ is an essential simple closed curve.
  We assume $\Sigma$ is not a pair of pants or an annulus. From
  \cite{MM00}, we recall the definition of subsurface projection
  $\pi_\Sigma \from \cc \to \calP \big( \calC(\Sigma) \big)$ from the curve
  graph of $S$ to the space of subsets of the curve graph of $\Sigma$. 
 
  Equip \s with a hyperbolic metric and represent $\Sigma$ as a convex set
  with geodesic boundary. (The projection map does not depend on the choice
  of the hyperbolic metric.) Let $\hat \Sigma$ be the Gromov
  compactification of the cover of \s corresponding to $\pi_1(\Sigma) <
  \pi_1(\s)$. There is a natural homeomorphism from $\hat \Sigma$ to
  $\Sigma$, which allows us to identify $\calC(\hat \Sigma)$ with
  $\calC(\Sigma)$. For any curve $\alpha$ on \s, let $\hat \alpha$ be the
  closure of the lift of $\alpha$ in $\hat \Sigma$. For each component
  $\beta$ of $\hat \alpha$, let $\calN_\beta$ be a regular neighborhood of
  $\beta \cup \partial \hat \Sigma$. The isotopy class of each component
  $\beta'$ of $\partial \calN_\beta$, with isotopy relative to $\partial
  \hat \Sigma$, can be regarded as an element of $\calP \big( \calC(\Sigma)
  \big)$; $\beta'$ is the empty set if $\beta'$ is isotopic into $ \partial
  \hat \Sigma$. We define \[ \pi_\Sigma(\alpha) = \bigcup_{\beta \subset
  \hat \alpha} \bigcup_{\beta' \subset \partial \calN_\beta} \{ \beta' \}.
  \] 

  The projection distance between two elements $\alpha, \beta \in \cc$ in
  $\Sigma$ is \[ d_{\calC(\Sigma)}(\alpha, \beta) = \diam_{\calC(\Sigma)}
  \big( \pi_\Sigma(\alpha) \cup \pi_\Sigma(\beta) \big). \] Given a subset
  $K \subset \cc$, we also define $\pi_\Sigma(K) = \bigcup_{\alpha \in K}
  \pi_\Sigma(\alpha)$, and the projection distance between two subsets of
  \cc in $\Sigma$ are likewise defined. For any $\Sigma \subset \s$, the
  projection map $\pi_\Sigma$ is a coarse Lipschitz map \cite{MM00}. 

  For any $X \in \T$, a pants decomposition $\calP$ on $X$ is called
  \emph{short} if $\sum_{\alpha \subset \calP}\ell_X(\alpha)$ is minimized.
  Note that a short pants decomposition is always $\ep_B$--short, and two
  short pants decompositions have bounded diameter in \cc. 
  
  Let $X_1, X_2 \in \T$. For $i=1,2$, let $\calP_i$ be a short pants
  decompositions on $X_i$. We will say $X_1$ and $X_2$ have
  \emph{$K$--bounded combinatorics} if there exists a constant $K$ such
  that the following two properties hold. \begin{itemize}

    \item For $\Sigma = \s$, or $\Sigma$ a subsurface of \s, \[
    d_{\calC(\Sigma)}(\calP_1,\calP_2)\leq K.\]

    \item For every curve $\gamma$ in \s \[ \twist_\gamma(X_1,X_2) \le K.
    \]
    
  \end{itemize}

  \subsection{Geodesic lamination}

  Let $X$ be a hyperbolic metric on \s. A geodesic lamination $\mu$ is a
  closed subset of  $\s$ which is a union of disjoint simple complete
  geodesics in the metric of $X$. These geodesics are called \emph{leaves}
  of $\mu$, and we will call their union  the \emph{support} of $\mu$. A
  basic example of a geodesic lamination is a multicurve (realized by its
  geodesic representative). 

  Given another hyperbolic metric on $\s$, there is a canonical one-to-one
  correspondence between the two spaces of geodesic laminations. We
  therefore will denote the space of geodesic laminations on  $\s$ by
  $\calG\calL(\s)$ without referencing to a hyperbolic metric. The set
  $\calG\calL(\s)$ endowed with the Hausdorff distance is compact. A
  geodesic lamination is said to be \emph{chain-recurrent} if it is in the
  closure of the set of all multicurves.

  A transverse measure on a geodesic lamination $\mu$ is a Radon measure on
  arcs transverse to the leaves of the lamination. The measure is required
  to be invariant under projections along the leaves of $\mu$. When $\mu$
  is a simple closed geodesic, the transverse measure is just the counting
  measure times a positive real number. It is easy to see that an infinite
  isolated leaf spiraling towards a closed leaf cannot be in the support of
  a transverse measure.
  
  The \emph{stump} of a geodesic lamination $\mu$ is a maximal (with
  respect to inclusion) compactly-supported sub-lamination of $\mu$ which
  admits a transverse measure of full support.  

  \subsection{Thurston metric} \label{Subsec:metric}

  In this section, we will give a brief overview of the Thurston metric,
  sometimes referred to as the Lipschitz metric or Thurston's
  ``asymmetric'' metric in the literature. All facts in this section are
  due to Thurston and contained in \cite{Thu86a}. We also refer to
  \cite{Pap07} for additional reference.
  
  Given $X, Y \in \T$, the distance \emph{from} $X$ to $Y$ in the Thurston
  metric is defined to be \[ \dL(X,Y) = \log \Lip(X,Y), \] where
  $\Lip(X,Y)$ is the infimum of Lipschitz constants over all homeomorphisms
  from $X$ to $Y$ in the correct homotopy class. Since the inverse of a
  Lipschitz map is not necessarily Lipschitz, there is no reason for the
  metric to be symmetric. In fact, $\Lip(X,Y)$ is in general not equal to
  $\Lip(Y,X)$, as shown in the example on page 5 of \cite{Thu86a}.  
 
  Thurston showed that the quantity $\Lip(X,Y)$ can be computed using
  ratios of lengths of curves on $\s$.
  
  \begin{theorem}[\cite{Thu86a}] \label{Thm:LengthRatio}
     For any $X, Y \in \T$, \[ L(X,Y) = \sup_{\alpha}
     \frac{\ell_Y(\alpha)}{\ell_X(\alpha)},\] where $\alpha$ ranges over
     all curves on \s. 
   \end{theorem}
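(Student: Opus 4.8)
The plan is to prove the two inequalities $\sup_{\alpha}\frac{\ell_Y(\alpha)}{\ell_X(\alpha)}\le L(X,Y)$ and $L(X,Y)\le\sup_{\alpha}\frac{\ell_Y(\alpha)}{\ell_X(\alpha)}$, the supremum being over all curves $\alpha$ on \s. The first is the soft one. By Arzel\'a--Ascoli there is an optimal map $f\from X\to Y$ in the correct homotopy class; let $L=L(X,Y)$ be its Lipschitz constant. For a curve $\alpha$, let $\alpha^*$ be its $X$--geodesic representative. Then $f(\alpha^*)$ is a loop on $Y$ in the free homotopy class of $\alpha$ of length at most $L\,\ell_X(\alpha)$, and since the geodesic representative minimizes length in its free homotopy class, $\ell_Y(\alpha)\le L\,\ell_X(\alpha)$; taking the supremum gives $\sup_\alpha\frac{\ell_Y(\alpha)}{\ell_X(\alpha)}\le L$. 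Moreover, since weighted simple closed curves are dense in $\ML(\s)$, the length functions $\ell_X,\ell_Y$ extend continuously to $\ML(\s)$, and the ratio $\ell_Y/\ell_X$ is scale--invariant, this supremum equals $\sup_{\mu\in\ML(\s)}\frac{\ell_Y(\mu)}{\ell_X(\mu)}=:K$.

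The second inequality is the substantial one, and the first step is to note that $K$ is \emph{attained}: the set $\{\mu\in\ML(\s):\ell_X(\mu)=1\}$ is compact (it is a section of $\PML(\s)$) and $\mu\mapsto\ell_Y(\mu)$ is continuous, so there is $\mu_0\in\ML(\s)$ with $\ell_Y(\mu_0)=K\,\ell_X(\mu_0)$. It now suffices to construct a map $g\from X\to Y$ in the correct homotopy class whose Lipschitz constant is at most $K$; with the first inequality this yields $L(X,Y)=K$ and identifies $\operatorname{supp}\mu_0$ as part of the maximally stretched locus $\lambda(X,Y)$. To build $g$, complete $\operatorname{supp}\mu_0$ to a maximal geodesic lamination $\Lambda$ on $X$, whose complementary regions are ideal triangles (and the appropriate cusped analogues near the punctures), and realize $\Lambda$ geodesically on $Y$ as well. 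On each complementary region, on both surfaces, install the horocyclic foliation --- the three families of horocyclic arcs issuing from the ideal vertices, together foliating the region away from a central tripod --- which is the standard device for interpolating among the leaves of $\Lambda$. Define $g$ leaf by leaf: on the leaves of $\Lambda$ send $X$--leaves to the corresponding $Y$--leaves affinely in the parametrization determined by the transverse horocyclic measure, and on each complementary region extend by the affine interpolation along the horocyclic arcs. The content of the construction is the estimate $\|dg\|\le K$: this is exactly where the hypothesis that $\mu_0$ \emph{realizes} the maximum is used, since it is what prevents $Y$ from forcing --- transverse to $\Lambda$, or along leaves of $\Lambda\setminus\operatorname{supp}\mu_0$ --- a stretch exceeding $K$; equivalently, it rules out an obstruction from the horocyclic widths of $Y$ relative to those of $X$. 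Carrying out this Lipschitz bound region by region, and then across the finitely many regions, is the technical heart of Thurston's argument.

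An alternative, more conceptual route to $L(X,Y)\le K$ runs through the structure of an optimal map $f$ near its stretch locus $\lambda_f$. Since $X$ and $Y$ have the same area $2\pi|\chi(\s)|$ by Gauss--Bonnet, $f$ cannot expand area, hence cannot stretch by the factor $L>1$ in two independent directions on a positive--measure set; a local analysis of best--Lipschitz maps then shows that on $\lambda_f$ the derivative stretches by exactly $L$ along a one--dimensional grain, that $\lambda(X,Y)=\bigcap_f\lambda_f$ contains a chain--recurrent geodesic lamination, and that $f$ carries this lamination onto a geodesic lamination of $Y$, multiplying arc length by exactly $L$. Its stump is then a measured lamination $\mu$ with $\ell_Y(\mu)=L\,\ell_X(\mu)$, so $K\ge L$, and approximating $\mu$ by weighted simple closed curves recovers the statement over curves. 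Either way the main obstacle is the same: controlling the fine hyperbolic geometry near the stretch locus of an optimal map (dually, proving the Lipschitz bound for the explicit stretch map). The surrounding measured--lamination and compactness bookkeeping is routine.
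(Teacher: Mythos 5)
This theorem is quoted in the paper from \cite{Thu86a} without proof, so there is no in-paper argument to compare against; your proposal has to be judged against Thurston's original proof. Your easy direction is correct and is exactly the standard one: push the $X$--geodesic representative forward by an optimal map and use that geodesics minimize length in their free homotopy class. The reduction to $\ML(\s)$ and the attainment of the supremum by compactness of the unit-length section are also fine.

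The gap is in the mechanism you propose for the hard inequality. Completing $\operatorname{supp}\mu_0$ to a maximal lamination $\Lambda$ and interpolating via the horocyclic foliations produces a map from $X$ onto a surface of the form $\str(X,\Lambda,t)$ --- a point on a single stretch path --- and there is no reason this surface is $Y$. The target of that construction is determined by $X$, $\Lambda$ and $t$; you do not get to prescribe it. So the step ``define $g$ leaf by leaf \dots{} extend by affine interpolation along the horocyclic arcs'' does not yield a map $X\to Y$, and the issue is not merely the deferred estimate $\|dg\|\le K$ (which for a genuine stretch map is comparatively tame). The substantial content of Thurston's Theorem 8.5 is precisely that $Y$ can be reached from $X$ by a \emph{concatenation} of stretch paths along possibly different maximal completions of $\lambda(X,Y)$, each stretching $\lambda(X,Y)$ maximally, so that the composite map has Lipschitz constant $K$; proving that such a concatenation exists and terminates at $Y$ occupies most of Thurston's paper and is absent from your sketch. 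Your alternative route via the local structure of optimal maps near the stretch locus is a legitimate modern strategy (it is close to the Gu\'eritaud--Kassel treatment), but as written it likewise asserts, rather than proves, the key analytic facts: that the stretch locus contains a chain-recurrent geodesic lamination carried onto a geodesic lamination of $Y$ with arc length multiplied by exactly $L$, and that this lamination has a nonempty stump. Either route can be completed, but in both cases the step you label as ``routine bookkeeping around a technical heart'' is in fact the theorem.
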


  The length function extends continuously to measured laminations, and the
  space of projectivized measured laminations is compact. Hence there is a
  measured lamination that realizes the supremum above. It might not be
  unique, but one can assign to an ordered pair $(X,Y)$ a geodesic
  lamination $\mu(X,Y)$ admitting a transverse measure that contains the
  supports of all the measured laminations realizing the supremum. 
  
  For any sequence $\{ \alpha_i \}$ of curves on \s with $\displaystyle
  \lim_{i \to \infty} \ell_Y(\alpha_i)/ \ell_X(\alpha_i) \to L(X,Y)$, let
  $\alpha_\infty$ be a limiting geodesic lamination of $\{ \alpha_i \}$ in
  the Hausdorff topology. Set 
  \[ \lambda(X,Y) = \bigcup \{ \alpha_\infty \}, \] 
  where the union on the right-hand side ranges over the limits of all such
  sequences. Thurston showed that $\lambda(X,Y)$ is a geodesic lamination,
  called the \emph{maximally stretched lamination} from $X$ to $Y$, which
  contains $\mu(X,Y)$ as its stump. Moreover, there is a
  $\Lip(X,Y)$--Lipschitz homeomorphism from $X$ to $Y$ in the correct
  homotopy class that stretches $\lambda(X,Y)$ by $\Lip(X,Y)$ and whose
  local Lipschitz constant outside $\lambda(X,Y)$ is strictly less than
  $\Lip(X,Y)$. In particular, the infimum is realized in the definition of
  $L(X,Y)$ and $\dL(X,Y)$. The existence of such a map follows from the
  fact that one can connect $X$ to $Y$ by a concatenation of finitely many
  stretch paths (see \subsecref{Shear}), all of which contain
  $\lambda(X,Y)$ in its stretch locus. We will call a homeomorphism  $f
  \colon X \to Y$ \emph{optimal} if $f$ is a $\Lip(X,Y)$--Lipschitz map.
  Note that our sense of ``optimal'' is more in the sense of $L^\infty$
  metric than $L^1$, since we only require the global Lipschitz constant to
  be minimized.
  
  Thurston showed that \T equipped with the Thurston metric is a
  (asymmetric) geodesic metric space. That is, for any $X, Y \in \T$, there
  exists a geodesic from $X$ to $Y$, i.e.\ a parametrized path $\GL \colon
  [0, d] \to \T$ such that $d=\dL(X,Y)$, $\GL(0) = X$, $\GL(d) = Y$, and
  for any $0 \le s < t \le d$, $\dL(\GL(s),\GL(t)) = t-s$. Any geodesic
  from $X$ to $Y$ is characterized by the property that the maximally
  stretched lamination $\lambda(X,Y)$ is stretched maximally at all times.
  Thus, there is only one such geodesic only when $\lambda(X,Y)$ is a
  maximal lamination (the complement of $\lambda(X,Y)$ are ideal
  triangles). In general, the set of geodesics from $X$ to $Y$ can have
  uncountable cardinality: the idea is that one is free to deform any part
  of the surface which is not forced to be maximally stretched. We refer to
  the proof of \thmref{Bad-Geodesic} in \secref{Examples} for an example of
  such deformation. 
  
  Given a geodesic segment $\GL \from [a,b] \to \T$, we will often denote
  by $\lambda_{\GL}$ the maximally stretched lamination from $\GL(a)$ to
  $\GL(b)$. The maximally stretched lamination is well defined for geodesic
  rays or bi-infinite geodesics. Suppose $\GL \from \RR \to \T$ is a
  bi-infinite geodesic. Consider two sequences $\{t_n\}_n, \{s_m\}_m
  \subset \RR$ with 
  \[ \displaystyle \lim_{n \to \infty} t_n \to \infty \quad \text{and}
  \quad \lim_{m\to \infty} s_m \to -\infty.\] Set $X_m = \GL(s_m)$
  and $Y_n =\GL(t_n)$. The sequence $\lambda(X_m,Y_n)$ is increasing by
  inclusion as $m, n \to \infty$, hence $\lambda = \bigcup_{m,n}
  \lambda(X_m,Y_n)$ is defined. The lamination $\lambda$ is independent of
  the sequences $X_m$ and $Y_n$, hence $\lambda=\lambda_{\GL}$ is the
  maximally stretched lamination for $\GL$. Similarly, the maximally
  stretched lamination $\lambda_{\GL}$ is defined for a geodesic ray $\GL
  \from [a,\infty) \to \T$.
  
  Throughout this paper, we will always assume Thurston geodesics are
  parameterized by arc length.
  
  \subsection{Stretch Paths} \label{Subsec:Shear}

  To prove $\T$ is a geodesic metric space, Thurston introduced a special
  family of geodesics called \textit{stretch paths}. Namely, let $\lambda$
  be a maximal geodesic lamination (all complementary components are ideal
  triangles). Then $\lambda$, together with a choice of basis for relative
  homology, defines \emph{shearing coordinates} on \Teich space (see
  \cite{Bon97}). In fact, in this situation, the choice of basis does not
  matter. For any hyperbolic surface $X$, and time $t$, define $\str(X,
  \lambda, t)$ to be the hyperbolic surface where the shearing coordinates
  are $e^t$ times the shearing coordinates at $X$. That is, the path $t
  \mapsto \str(X, \lambda,t)$ is a straight line in the shearing coordinate
  system associated to $\lambda$. Thurston showed  \cite{Thu86a} that this
  path is a geodesic in $\T$. 
  
  In the case where $\lambda$ is a finite union of geodesics, the shearing
  coordinates are easy to understand. An ideal triangle has an inscribed
  circle tangent to each edge at a point which we refer to as an
  \emph{anchor point}. Then the shearing coordinate associated to two
  adjacent ideal triangles is the distance between the anchor points coming
  from the two triangles (see \figref{Shear}). To obtain the surface
  $\str(X, \lambda,t)$, one has to \emph{slide} every pair of adjacent
  triangles against each other such that the distance between the
  associated pairs of anchor points is increased by a factor of $e^t$. 

  \subsection{An example} \label{Subsec:Example}
 
  We illustrate some possible behaviors along a stretch path in the
  following basic example. 
 
  Fix a small $0<\epsilon\ll 1.$ Let $A_0$ be an annulus which is glued out
  of two ideal triangles as follows. One pair of the sides is glued with a
  shift of $2$, and another pair is glued with a shift of $2+2\epsilon$, as
  in \figref{Shear}. That is, if $p,p'$ are the anchor points associated to
  one triangle and $q,q'$ are anchor points associated to the other
  triangle, and the sides containing $p$ and $q$  are glued, and same for
  $p'$ and $q'$, then the segments $[p,q]$ and $[p',q']$ have lengths $2$
  and $2+2\epsilon$ respectively. Note that by adding enough ideal
  triangles to this construction and gluing them appropriately, one can
  obtain a hyperbolic surface  of arbitrarily large complexity. For example
  adding an ideal triangle to the top and to the bottom of $A_0$ with zero
  shift and then identifying the top edges with zero shift and the bottom
  edges with zero shift gives rise to a hyperbolic surface $X_0$ which is
  topologically a sphere with four points removed. Also, the sides of the
  four ideal triangles define a maximal geodesic lamination $\lambda$ on
  $X_0$. When two triangles are glued with zero shift, the associated
  shearing coordinate remains unchanged under a stretch map. Hence, we
  concentrate on how the geometry of $A_0$ changes only. 

  \begin{figure}[htp!] 
  \setlength{\unitlength}{0.01\linewidth}
  \begin{picture}(100, 37)
  \put(9,-2){ 
  \includegraphics{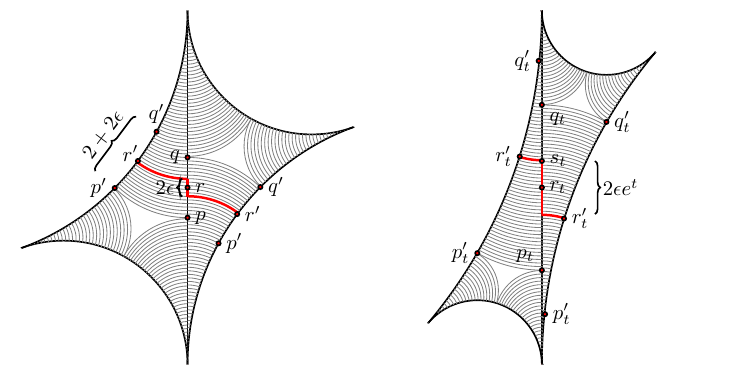} 
  }
  \end{picture}
  \caption{The annuli $A_0$ and $A_t$ are obtained by gluing two ideal
    triangles. The \emph{shifts} for $A_t$ are $e^t$--times those of
  $A_0$. }
  \label{Fig:Shear}
  \end{figure}
  
  Let $r$ and $r'$ be the midpoints of $[p,q]$ and $[p',q']$. There is an
  isometry of $A_0$ that switches the two triangles and fixes $r$ and $r'$.
  Hence, if $\gamma$ is the core curve of the annulus $A_0$, the geodesic
  representative of $\gamma$, which is unique and is fixed by this
  isometry, passes through points $r$ and $r'$.

   Define $X_t=\str(X_0,\lambda,t)$.  We give an estimate for the hyperbolic 
  length  of $\gamma$ at $X_t$ for $t\in \RR_+$. 
  \begin{claim*} 
    For $t\in \RR_+$, we have $\ell_{X_t}(\gamma)\emul \epsilon \, e^t+e^{-e^t}$.
  \end{claim*}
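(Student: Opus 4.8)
The plan is to compute $\ell_{X_t}(\gamma)$ by working inside the annulus $A_t = \str(A_0, \lambda, t)$ and using the symmetry observed above: the core geodesic of $A_t$ passes through the midpoints $r_t, r'_t$ of the two sliding segments $[p_t, q_t]$ and $[p'_t, q'_t]$, whose lengths are $2 e^t$ and $(2+2\epsilon) e^t$ respectively. First I would set up an explicit hyperbolic model: realize $A_t$ as the quotient of a region in $\HH^2$ by the cyclic group generated by the hyperbolic isometry whose axis is the core geodesic. The translation length of this isometry is exactly $\ell_{X_t}(\gamma)$, so the whole computation reduces to identifying this translation length in terms of the two shears $a = 2e^t$ and $b = (2+2\epsilon)e^t$.

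The key step is a trigonometric identity for the core length of an annulus glued from two ideal triangles with prescribed shears. I would place one ideal triangle with vertices at standard positions (say $0, 1, \infty$ in the upper half-plane) and track where the anchor points land; sliding by $a$ on one edge and by $b$ on the other determines the gluing isometry of the opposite triangle, and composing the two edge-identifications gives the holonomy around $\gamma$. The upshot should be a clean formula of the shape $\cosh\big(\tfrac{1}{2}\ell_{X_t}(\gamma)\big) \emul$ some expression in $e^{a}, e^{b}, a, b$ — concretely I expect the dominant behavior to be governed by $|b - a| = 2\epsilon e^t$ (the \emph{asymmetry} of the two shears) together with an exponentially small correction $e^{-a} = e^{-2e^t}$ coming from how close the configuration is to a genuine once-punctured annulus (a cusp), which would force length $0$ if the two shears were infinite and equal. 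This is the heart of the argument and the step most likely to require care: one must extract the \emph{leading} term from a hyperbolic-trigonometric expression in two large, nearly-equal parameters, and show the two competing contributions are $\epsilon e^t$ and $e^{-e^t}$ with universal multiplicative constants.

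Granting the formula, the claim follows by elementary estimates. For $t > 0$ we have $\epsilon e^t$ ranging over $(\epsilon, \infty)$ and $e^{-e^t}$ over $(0, e^{-1})$, so the sum $\epsilon e^t + e^{-e^t}$ is always bounded below by a universal constant times each term; one then checks that $\cosh\big(\tfrac12 \ell_{X_t}(\gamma)\big) - 1 \emul \big(\epsilon e^t + e^{-e^t}\big)^2$ when the right side is small (using $\cosh u - 1 \emul u^2$ for bounded $u$) and that both sides blow up comparably when $\epsilon e^t$ is large (using $\cosh u \emul e^{u}$ and $\ell \emul \log$ of the length-ratio type bounds). Combining the two regimes gives $\ell_{X_t}(\gamma) \emul \epsilon e^t + e^{-e^t}$, as desired.

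An alternative, more synthetic route avoids explicit coordinates: decompose the core curve's geodesic into the two segments $[r_t, r'_t]$ crossing the triangles, and estimate each segment's length and the turning angles using the Collar Lemma (\lemref{Collar}) and standard right-angled hexagon/pentagon formulas. The exponentially small term $e^{-e^t}$ would then appear as the width of the thin part the geodesic is forced through when both shears are large, while the linear term $\epsilon e^t$ measures the failure of the two midpoints to be antipodal. I would present whichever of the two is shorter; I anticipate the explicit holonomy computation is cleaner since the two-triangle annulus has so few parameters, but in either approach the genuine obstacle is the same: correctly balancing the two asymptotically distinct contributions to the core length.
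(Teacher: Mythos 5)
Your proposal is correct in outline but takes a genuinely different route from the paper. The paper never computes the holonomy: it uses the order-two isometry of $A_t$ swapping the two triangles to see that the core geodesic passes through the midpoints $r_t,r'_t$ of the two glued segments, so $\ell_{X_t}(\gamma)=2d(r_t,r'_t)$, and then estimates $d(r_t,r'_t)$ by a comparison triangle whose third vertex $s_t\in[p_t,q_t]$ lies on the same horocycle as $r'_t$. The triangle inequality gives the upper bound $d(r_t,s_t)+d(s_t,r'_t)$, the obtuse angle at $s_t$ gives the matching lower bound $\max\{d(r_t,s_t),d(s_t,r'_t)\}$, and the two legs are identified as exactly $\epsilon e^t$ (half the difference of the shears) and $\emul e^{-(1+\epsilon)e^t}$ (a horocyclic width, left as an omitted computation); the exponent $(1+\epsilon)e^t$ is then replaced by $e^t$ via precisely the observation you make at the end, that the linear term dominates once $\epsilon e^t\gtrsim 1$. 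Your primary route, an explicit trace/translation-length formula for the composition of the two edge gluings, would also work and is the more systematic shear-coordinate computation; your reconciliation of the two regimes via $\cosh u-1\emul u^2$ is correct and consistent with the paper's answer, since $(x+y)^2\emul x^2+y^2$ for $x,y\ge 0$ absorbs the discrepancy between your predicted correction $e^{-2e^t}$ at the level of $\cosh(\ell/2)-1$ and the paper's $e^{-(1+\epsilon)e^t}$ at the level of $\ell$. What the paper's argument buys is that it realizes the two contributions as actual geodesic segments, which it immediately reuses: the same triangle $[r_t,r'_t,s_t]$ drives the subsequent estimate of $\twist_\gamma(\lambda,X_t)$. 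Both routes leave one computation to the reader (yours: the trace identity; the paper's: the horocyclic width), so to complete your proof you would still need to carry out the matrix product and verify the leading constants.
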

  \begin{proof}
  Let $A_t$ be the annulus obtained by gluing two triangles when the shifts
  are $2 e^t$ and $(2+2\ep)e^t$ and let $p_t, q_t, r_t$ and $r'_t$ be
  points that are defined similar to $p, q, r,$ and $r'$ in $A_0$. The
  geodesic representative of the core curve of $A_t$, which we still denote
  by $\gamma$, passes through the points $r_t$ and $r'_t$. Denote the
  length of the segment $[r_t, r'_t]$ by $d(r_t, r_t')$. Then 
  \[ \ell_{X_t}(\gamma)= 2 d(r_t,r'_t).\] 
  To estimate $d(r_t,r'_t)$ we work in one of the ideal triangles.
  Let $s_t\in [p_t,q_t]$ be the point which is on 
  the  same horocycle as $r'_t$.  
  Consider the triangle $[r_t,r'_t]\cup[r'_t,s_t]\cup[s_t,r_t]$. 
  By the triangle inequality we have 
  \[
  d(r_t,r'_t)\leq d(r_t,s_t)+d(s_t,r'_t) \leq 2 \max \big\{ d(r_t,s_t),
  d(s_t,r'_t) \big\}.
  \]
  On the other hand, the angle between the segments $[r'_t,s_t]$ and
  $[s_t,r_t]$ is at least $\frac \pi 2$, which implies that the side
  $[r_t,r'_t]$ is the largest of the triangle. Hence, we also have 
  \[
  d(r_t,r'_t)\geq  \max \big\{d(r_t,s_t),d(s_t,r'_t) \big\}.
  \] 
  That is, up to a multiplicative error of at most $4$, the length
  $\ell_t(\gamma)$ is $d(r_t,s_t)+d(s_t,r'_t)$. The distance $d(s_t,r'_t)$
  is asymptotically (as $t\to +\infty$) equal to the length of the horocycle between $s_t$ and
  $r'_t$. It is straightforward to see that since $d(p_t,s_t)=e^t(1+\epsilon)$, the length of the 
  horocycle is  $e^{-e^t(1+\epsilon)}$. Also  $d(r_t,s_t)=\epsilon e^t$, and we have 
  \[
  \ell_{X_t}(\gamma)\emul \epsilon e^t+e^{-e^t(1+\epsilon)}.
  \]
  The second term in the sum can be replaced with $e^{-e^t}$ without
  increasing the multiplicative error by much. This is true because the
  first term  $\epsilon e^t$ in the sum is bigger than the second term when
  $\epsilon e^t$ is bigger than $1$. This proves the claim.
  \end{proof}
 
  We can now approximate the minimum of $\ell_{X_t}(\gamma)$ for $t\in \RR_+$. At $t=0$ and
  $t=\log \frac{1}{\epsilon}$ the length of $\gamma$ is basically $1$. If
  $\epsilon$ is  small enough, there is $t_0>0$ such that $\epsilon
  e^{t_0}=e^{-e^{t_0}}$. Then we have 
  \[
  \ell_{X_t}(\gamma)\emul \begin{cases}
  e^{-e^t}, & \quad  t<t_0,\\ 
   \epsilon e^t, & \quad t>t_0.
  \end{cases}
  \] 
  This means in particular that the length of $\gamma$ decreases
  super-exponentially fast, reaches its minimum, and grows back up
  exponentially fast. We will not compute the exact value of $t_0$, but if
  we take log twice we see that $t_0\eadd \log \log\frac 1 \epsilon$, with
  additive error at most $\log 2$. Then $\ell_{t_0}(\gamma)\emul \epsilon
  \log\frac{1}{\epsilon}$ and this is, up to a multiplicative error, the
  minimum of $\ell_{X_t}(\gamma)$. 
 
  The reason the curve $\gamma$ gets short and then long again is because
  it is more efficient to twist around $\gamma$ when $\gamma$ is short. To
  see this, we estimate the relative twisting of  $X_0$ and $X_t$ around
  $\gamma$ at $t=\log{\frac{1}{\epsilon}}$, that is when the length of
  $\gamma$ grew back to approximately $1$.  The lamination $\lambda$ is
  nearly perpendicular to $\gamma$ at $X_0$ and does not twist around it.
  Hence, we need to compute how many times it twists around $\gamma$ in
  $X_t$. 
  
   For a fixed $t>0$, choose lifts $\tilde\gamma$ and  $\tilde\lambda$ to
   $\HH$  of the geodesic representative of $\gamma$ and of the leaf of
   $\lambda$ containing $[p_t,q_t]$, such that $\tilde\gamma$ and
   $\tilde\lambda$ intersect. Let $\ell$ be the length of the orthogonal
   projection of $\tilde\lambda$ to $\tilde\gamma$. Then (see \cite[\S
   3]{Min96a})
   $$\twist_\gamma(\lambda,X_t)\eadd\frac{\ell}{\ell_{X_t}(\gamma)}.$$ To
   find $\ell$, we note that $\cosh{\ell/2}=\frac{1}{\sin \alpha}$, where
   $\alpha$  is the angle between  $\tilde\gamma$ and $\tilde\lambda$. In
   the triangle $[r_t,r'_t]\cup[r'_t,s_t]\cup[s_t,r_t]$, $\alpha$ is the
   angle between segments $[s_t,r_t]$ and $[r_t,r'_t]$. Let $\beta$ be the
   angle between $[r'_t,s_t]$ and $[s_t,r_t]$. Since $\beta$ is
   asymptotically $\pi/2$, by the hyperbolic sine rule, we have  
  \[ 
  \cosh{\ell/2}\emul \frac{\sinh{d(r'_t,r_t)}}{\sinh{d(r'_t,s_t)}}.
  \]
  Assuming  $t=\log\frac 1 \epsilon$,   we have $\sinh{d(r'_t,r_t)}\emul 1$ 
  and $\sinh{d(r'_t,s_t)}\emul e^{-\frac 1 \epsilon}$
  which implies $\ell\eadd \frac 2 \epsilon$. Hence
  \[
  \twist_{\gamma} (\lambda,X_{\log \frac  1 \epsilon})\emul \frac 1 \epsilon.
  \]
  
  To summarize, the surface $X_{\log \frac  1 \epsilon}$ is close to
  $D_\gamma^n(X_0)$, where $D_\gamma$ is a Dehn twist around $\gamma$ and
  $n \emul \frac 1 \ep$.  The stretch path $\str(X_0, \lambda, t)$
  from $X_0$ to $X_{\log \frac  1 \epsilon}$ changes only an annular neighborhood of
  $\gamma$, first decreasing the length of $\gamma$ super-exponentially
  fast to $\ep \log \frac 1 \ep$ and then increasing it exponential fast.
  In fact, further analysis shows that essentially all the twisting is done
  near the time $t_0$ when the length of $\gamma$ is minimum. 
  
  \subsection{Shadow map} \label{Sec:ShadowMap}
  
  For any $X \in \T$, the set of systoles on $X$ has uniformly bounded
  diameter in \cc. We will call the coarse map $\pi \colon \T \to \cc$
  sending $X$ to the set of systoles on $X$ the \emph{shadow map}. The
  following lemma shows that the shadow map is Lipschitz. 
  
  \begin{lemma}
     
    The shadow map $\pi \colon \T \to \cc$ satisfies for all $X,Y\in \T$
     \[ \dS \big( \pi(X), \pi(Y)
    \big) \prec \dL(X,Y). \]

  \end{lemma}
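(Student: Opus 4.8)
The plan is to show that the map $\pi$ sending $X$ to its set of systoles is coarsely Lipschitz as a map $(\T,\dL)\to(\cc,\dS)$, by comparing systoles across surfaces connected by a controlled Lipschitz map and then integrating along a geodesic. First I would reduce to the case where $\dL(X,Y)$ is bounded by a fixed constant, say $\dL(X,Y)\le 1$: indeed, if $\GL\from[0,d]\to\T$ is a Thurston geodesic from $X$ to $Y$ parametrized by arc length (so $d=\dL(X,Y)$), I partition $[0,d]$ into $\lceil d\rceil$ intervals of length at most $1$ and apply the bounded case to each consecutive pair $\GL(i),\GL(i+1)$, summing the resulting bounds on $\dS$ along the chain via the triangle inequality. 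So it suffices to prove $\dS(\pi(X),\pi(Y))=O(1)$ whenever $\dL(X,Y)\le 1$.

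For the bounded case, let $f\from X\to Y$ be an optimal map, so $f$ is $e^{\dL(X,Y)}\le e$--Lipschitz. Let $\alpha$ be a systole of $X$; then $\ell_X(\alpha)\le\ep_B$, and $\ell_Y(\alpha)\le e\,\ell_X(\alpha)\le e\,\ep_B$, so $\alpha$ has bounded length on $Y$. Now let $\beta$ be a systole of $Y$. The key point is that on $Y$, both $\alpha$ and $\beta$ have length bounded by the universal constant $e\,\ep_B$, hence by the Collar Lemma (\lemref{Collar}) their intersection number is universally bounded: if $\I(\alpha,\beta)$ were large then, since $\beta$ crosses $\alpha$, we would get $\ell_Y(\beta)\ge \I(\alpha,\beta)\,\delta_B$ (using the dual Bers constant as in the excerpt), forcing $\ell_Y(\beta)$ to be large, a contradiction. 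So $\I(\alpha,\beta)=O(1)$, and then by \eqnref{Intersection}, $\dS(\alpha,\beta)\le\log_2\I(\alpha,\beta)+1=O(1)$. Since $\pi(X)$ and $\pi(Y)$ each have uniformly bounded diameter in \cc (stated in the excerpt), we conclude $\dS(\pi(X),\pi(Y))=O(1)$, completing the bounded case.

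Putting the two steps together: for general $X,Y$, writing $N=\lceil\dL(X,Y)\rceil$ and $X_i=\GL(i)$ for $i=0,\dots,N-1$, $X_N=Y$, we have $\dL(X_i,X_{i+1})\le 1$ for each $i$ (the last step has length $\le 1$ as well since $\GL$ is a geodesic), so
\[
  \dS\big(\pi(X),\pi(Y)\big)\le\sum_{i=0}^{N-1}\dS\big(\pi(X_i),\pi(X_{i+1})\big)\prec N\prec\dL(X,Y)+1,
\]
which is exactly $\dS(\pi(X),\pi(Y))\prec\dL(X,Y)$ in the paper's notation. I expect the only real subtlety to be the asymmetry of the Thurston metric: one must make sure the Lipschitz map points in the direction $X\to Y$ consistent with how $\dL(X,Y)$ is defined, and that the chain of intermediate surfaces is traversed in the forward direction along the geodesic so that each consecutive distance $\dL(X_i,X_{i+1})$ is genuinely controlled — but since Thurston geodesics are parametrized by arc length and the distance is additive along them, this causes no difficulty.
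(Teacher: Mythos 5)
Your proof is correct, but it is organized differently from the paper's, which is a one-step computation. The paper takes a systole $\alpha$ of $X$ and a systole $\beta$ of $Y$, pushes $\alpha$ forward by the optimal $K$--Lipschitz map ($K=e^{\dL(X,Y)}$) to get $\ell_Y(\alpha)\le K\ep_B$, applies the dual Bers constant to get $\I(\alpha,\beta)\le K\ep_B/\delta_B$, and then invokes $\dS(\alpha,\beta)\le\log_2\I(\alpha,\beta)+1$ to conclude $\dS(\alpha,\beta)\prec\log K=\dL(X,Y)$ directly. The point you did not exploit is that the logarithm in the intersection-number bound already converts the \emph{multiplicative} Lipschitz distortion into an \emph{additive} curve-graph bound, so no subdivision is needed. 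Your route --- prove a uniform bound when $\dL(X,Y)\le 1$, then chain unit steps along a Thurston geodesic --- is valid (the local step is exactly the paper's argument with $K\le e$, and additivity of $\dL$ along a forward-parametrized geodesic handles the asymmetry as you note), but it costs you an appeal to the existence of Thurston geodesics, which the direct argument does not need, and it yields the weaker-looking linear bound $\prec\dL(X,Y)+1$ rather than the sharper $\log_2 K+O(1)$ with an explicit constant in front of $\dL$. One small wrinkle in your bounded case: you apply the collar estimate to $\alpha$, which is only $e\ep_B$--short on $Y$ rather than $\ep_B$--short, so strictly you should either use the collar of the genuine systole $\beta$ (giving $\ell_Y(\alpha)\ge\I(\alpha,\beta)\delta_B$ as in the paper) or replace $\delta_B$ by the dual constant for $e\ep_B$; either fix is immediate.
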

  
  \begin{proof}
     
    Let $X, Y \in \T$ and let $K= L(X,Y)$. Let $\alpha$ be a systole on $X$
    and let $\beta$ be a systole on $Y$. Recall that $\epsilon_B$ is the
    Bers constant and $\delta_B$ its dual constant defined in
    \subsecref{collar}. We have $\ell_X(\alpha) \le \ep_B$ and
    $\ell_Y(\beta) \le \ep_B$. Now 
    \[ 
      \I(\alpha,\beta) \le \frac{\ell_Y(\alpha)}{\delta_B} \le \frac{K
      \ell_X(\alpha)}{\delta_B} \le K \frac{\ep_B}{\delta_B}.
    \]
    Therefore  \eqnref{Intersection} implies $\dS(\alpha,\beta) \prec \log
    K = \dL(X,Y)$. \qedhere 

  \end{proof}
  
  For simplicity, we will often write $\dS(X,Y) := \dS\big(\pi(X), \pi(Y)\big)$.  

\section{Hyperbolic geometry}

  \label{Sec:Hyperbolic}

  In this section, we establish some basic properties of the hyperbolic
  plane $\HH$ and hyperbolic surfaces. Many of these results are known in
  spirit, but to our knowledge the exact statements do not directly follow
  from what is written in the literature. 

  Recall that $\HH$ is Gromov hyperbolic, that is, there is a constant
  $\delta_\HH$ such that all triangles in $\HH$ are $\delta_\HH$ slim:
  every edge of a triangle is contained in a $\delta_\HH$--neighborhood of
  the union of the other two edges. 
 
  \subsection{Geodesic arcs on hyperbolic surfaces}
  
  Let $\alpha$ be a simple closed geodesic on a hyperbolic surface $X$ and
  let $U(\alpha)$ be the standard collar of $\alpha$. When $\omega$ is a
  geodesic segment contained in $U(\alpha)$ with endpoints $p$ and $p'$, we
  denote the distance between $p$ and $p'$ in $U(\alpha)$ by
  $d_{U(\alpha)}(\omega)$. The following lemma can be read as saying that
  all the twisting around a  curve $\alpha$ takes place in $U(\alpha)$. 
   
  \begin{lemma} \label{Lem:Arc-In-Pants}

    Let $P$ be a pair of pants in a hyperbolic surface $X$ with geodesic
    boundary lengths  less than $\ep_B$. For each connected component
    $\alpha \subset \partial P$, there is an arc $\tau_\alpha$ in
    $U(\alpha)$ perpendicular to $\alpha$ such that the following holds.
    Any finite sub-arc  $\omega$ of a simple complete geodesic $\lambda$
    that is contained in $P$ can be subdivided into three pieces
      \[
      \omega = \omega_\alpha \cup \omega_0 \cup \omega_\beta
      \]
    such that 
    \begin{enumerate}[(a)]
      
      \item The interior of $\omega_0$ is disjoint from every $U(\gamma)$,
      for $\gamma \subset \partial P$, and $\ell_X(\omega_0) = O(1)$. 
               
      \item The  segment $\omega_\alpha$, $\alpha \subset \partial P$, is
      contained in  $U(\alpha)$ and intersects any curve in $U(\alpha)$
      that is equidistant to $\alpha$ at most once. That is, as one travels
      along $\omega_\alpha$, the distance to $\alpha$ changes
      monotonically. Furthermore, 
      \[
        \ell_X(\omega_\alpha) \eadd \I( \tau_\alpha, \omega) \ell(\alpha) +
        d_{U(\alpha)}(\omega_\alpha),
      \]
      
      \item  The same holds for $\omega_\beta$ ($\alpha$ and $\beta$ may be 
      the same curve). 
      
    \end{enumerate}

  \end{lemma}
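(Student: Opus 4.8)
The plan is to analyze how the geodesic $\lambda$ traverses the pair of pants $P$ by lifting to the universal cover and exploiting the fact that $P$ is thin (all boundary lengths $< \ep_B$, hence the collars $U(\gamma)$ are deep). First I would fix, for each boundary component $\alpha \subset \partial P$, a perpendicular arc $\tau_\alpha$ crossing $U(\alpha)$; this is the arc against which twisting is measured, and by the discussion after \lemref{Collar} its endpoints on $\partial U(\alpha)$ have controlled geometry. The key structural input is that $P$ minus the open collars $U(\gamma)$, $\gamma \subset \partial P$, is a compact ``core'' region whose diameter is $O(1)$: this is because a pair of pants with short boundary has a thick part of bounded size (the seams and the region near the unique orthogeodesics between the boundary curves all have bounded length). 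Any geodesic arc $\omega \subset P$ that is not entirely inside one collar must therefore pass through this bounded core, and because it is a \emph{simple} geodesic it can enter and leave each collar in a controlled way.

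The main work is the subdivision. Given $\omega$, I would first intersect it with the core region $P \setminus \bigcup_\gamma U(\gamma)$. A simple complete geodesic crosses this bounded-diameter region in at most $O(1)$ subarcs of total length $O(1)$; I would absorb all of these, together with any excursions into collars $U(\gamma)$ for $\gamma$ other than the two chosen boundary curves $\alpha,\beta$, into the middle piece $\omega_0$, using simplicity and the fact that an excursion into a collar $U(\gamma)$ that both enters and exits through $\partial U(\gamma)$ while staying simple can wrap only boundedly many times before being forced back out (for a complete simple geodesic contained in $P$, the only way to accumulate unbounded length in a collar is to spiral toward the core curve or to run off to the boundary, i.e.\ to be an ``end'' of $\omega$). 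What remains of $\omega$ after removing $\omega_0$ is (at most) two end-segments, one in each of two collars; these are $\omega_\alpha$ and $\omega_\beta$. For each, monotonicity of the distance to $\alpha$ follows from a standard fact about geodesics in a hyperbolic annulus: a geodesic segment in $U(\alpha)$ meets each equidistant curve at most once unless it has already turned around, but a subarc of a simple geodesic contained in $P$ that lives in the collar cannot turn around (turning around would force a self-intersection or an exit through $\partial U(\alpha)$, contradicting that this is the terminal piece). So after possibly shrinking $\omega_\alpha$ at the expense of $\omega_0$ (only by a bounded amount, since one full pass through the collar minus a neighborhood of $\alpha$ has length comparable to $\log(1/\ell(\alpha))$, which we can split off), we may assume the distance to $\alpha$ is monotone along $\omega_\alpha$.

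The final length estimate
\[
\ell_X(\omega_\alpha) \eadd \I(\tau_\alpha,\omega)\,\ell(\alpha) + d_{U(\alpha)}(\omega_\alpha)
\]
I would prove by passing to the annular cover $\hat U(\alpha)$ and lifting $\omega_\alpha$ to a geodesic segment $\hat\omega_\alpha$ with endpoints $p,p'$ on the two boundary circles. Uniformize $\hat U(\alpha)$ as a Euclidean cylinder of circumference $\ell(\alpha)$ (up to bounded distortion this is legitimate since $\alpha$ is $\ep_B$-short and the collar has bounded modulus relative to its core length), so that the geodesic $\hat\omega_\alpha$ becomes, up to bounded error, a straight segment; its ``horizontal'' displacement is $\I(\tau_\alpha,\omega)\cdot\ell(\alpha)$ by definition of the twisting number (the number of times $\omega$ crosses $\tau_\alpha$ equals, up to an additive $O(1)$, the winding of $\hat\omega_\alpha$ around the cylinder), and its ``vertical'' displacement is $d_{U(\alpha)}(\omega_\alpha)$ by definition of that quantity. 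A straight segment in a flat cylinder has length $\eadd$ (horizontal displacement) $+$ (vertical displacement) when we are willing to lose an additive constant — more precisely the true length is $\sqrt{h^2+v^2}$, which is within a factor $\sqrt2$ of $h+v$, and then one checks the hyperbolic-versus-flat comparison only costs another additive $O(1)$ because the relevant region is within bounded distance of the core curve or else the length is already $\gmul \log(1/\ell(\alpha))$ and the comparison is a standard collar computation. Part (c) is verbatim the same argument applied to $\beta$.

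The step I expect to be the main obstacle is the rigorous bookkeeping in the subdivision: controlling exactly how a \emph{simple} complete geodesic can interact with all three collars simultaneously — in particular ruling out that $\omega$ re-enters a collar it has already left, or that it has unbounded length in some $U(\gamma)$ with $\gamma \notin \{\alpha,\beta\}$ — and making sure the ``bounded core'' claim is quantitative and uniform over all hyperbolic structures on $P$ with $\ep_B$-short boundary. Once simplicity is used correctly to localize $\omega$ into a bounded middle piece plus two monotone collar end-pieces, the length estimate itself is a routine hyperbolic-trigonometry computation of the kind carried out in Subsection~\ref{Sec:Shear}, and it should be safe to relegate to \secref{Hyperbolic} or to cite \cite{Min96a, Min96b, Raf05} for the twist-versus-length comparison in the collar.
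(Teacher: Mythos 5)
Your decomposition of $\omega$ into two collar end-pieces plus a bounded middle is the same as the paper's, but the two quantitative tools you substitute for the paper's central argument both have genuine gaps. The paper's key step is a thin-triangles argument in the universal cover: a lift $\tomega$ stays within $2\delta_\HH$ of $\talpha\cup\teta\cup\tbeta$, where $\eta$ is the orthogeodesic between $\alpha$ and $\beta$, and the arcs $\tau_\alpha,\tau_\beta$ are \emph{defined} to be $\eta\cap U(\alpha)$ and $\eta\cap U(\beta)$ (with independence of this choice checked at the end). Your replacement for the middle piece --- bounded diameter of the core $P\setminus\bigcup_\gamma U(\gamma)$ --- does not by itself bound $\ell_X(\omega_0)$: the core is not simply connected, and a geodesic arc confined to a region of diameter $O(1)$ can a priori be long by winding; ruling out winding is exactly what the fellow-traveling of $\tomega_0$ with the bounded-length portion of $\teta$ accomplishes. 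Relatedly, you cannot ``absorb excursions into other collars into $\omega_0$'': conclusion (a) requires the interior of $\omega_0$ to be disjoint from \emph{every} $U(\gamma)$. The paper avoids this by invoking the last clause of \lemref{Collar} for the complete simple geodesic $\lambda$, which shows $\omega$ can meet at most two collars in the first place.

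The second gap is the flat-cylinder computation for part (b). The collar $U(\alpha)$ is not uniformly bi-Lipschitz to a Euclidean cylinder of circumference $\ell(\alpha)$: the equidistant curve at depth $\rho$ has length $\ell(\alpha)\cosh\rho$, so the distortion is of order $1/\ell(\alpha)$, which is unbounded as $\alpha$ gets short. More seriously, the conclusion of (b) is an \emph{additive} comparison $\eadd$ with a universal constant, and $\sqrt{h^2+v^2}$ differs from $h+v$ by an unbounded additive amount (e.g.\ $(2-\sqrt{2})R$ when $h=v=R$), so even a corrected flat model only yields $\emul$, not $\eadd$. The true mechanism, which the paper again extracts from thin triangles, is that $\tomega_\alpha$ fellow-travels an ``L'': a sub-arc of $\tilde\tau_\alpha$ of length $\eadd d_{U(\alpha)}(\omega_\alpha)$ followed by a sub-arc of $\talpha$ of length $\eadd \I(\tau_\alpha,\omega)\,\ell(\alpha)$, and the exponential divergence of hyperbolic geodesics makes the total length the genuine sum of the two contributions up to a universal additive error. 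Your outline identifies the right decomposition and the right difficulties, but both substitute tools would need to be reduced back to the fellow-traveling argument to obtain the lemma as stated.
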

  
  \begin{proof}
      
    Note that if $\omega$ intersects $U(\alpha)$, then by \lemref{Collar},
    $\lambda$ either intersects $\alpha$ or spirals towards $\alpha$. This
    implies that $\omega$ intersects at most two standard collars, say
    $U(\alpha)$ and $U(\beta)$. We allow the possibility that $\alpha =
    \beta$. Let $\omega_\alpha = \omega \cap U(\alpha)$, $\omega_\beta =
    \omega \cap U(\beta)$, and $\omega_0$ be the remaining middle segment.
    In $P$, there is a unique geodesic segment $\eta$ perpendicular to
    $\alpha$ and $\beta$. Set $\tau_\alpha= \eta\cap U(\alpha)$ and
    $\tau_\beta= \eta\cap U(\beta)$. If $\alpha=\beta$, then $\eta$ is the
    unique simple segment intersecting $U(\alpha)$ twice and perpendicular
    to $\alpha$, and $\tau_\alpha$ and $\tau_\beta$ are the two components
    of $\eta$ in $U(\alpha)$. In the universal cover, a lift $\tomega$ of
    $\omega$ is in a $2\delta_\HH$--neighborhood of the union of a lift
    $\talpha$ of $\alpha$, a lift $\tbeta$ of $\beta$ and a lift $\teta$ of
    $\eta$.
 
    In fact, a point in $\tomega$ is either in a $\delta_B$--neighborhood
    of $\talpha \cup \tbeta$, where $\delta_B$ is the dual constant to
    $\ep_B$, or is uniformly close to $\teta$. This fact has two
    consequences. First, because $U(\alpha)$ and $U(\beta)$ have
    thicknesses bigger than $\delta_B$, the lift $\tomega'$ of $\omega'$ is
    contained in a uniform bounded neighborhood of $\teta$ and hence, the
    length of $\omega'$ is comparable to the length of $\eta$ outside of
    $U(\alpha)$ and $U(\beta)$, which is uniformly bounded.    
    
    Secondly, $\tomega_\alpha$ is in a $\delta_\HH$--neighborhood of
    $\tilde \tau_\alpha$ and $\talpha$. The portion that is in the
    neighborhood of $\tilde \tau_\alpha$ has a length that is, up to an
    additive error, equal to $d_{U(\alpha)}(\omega_\alpha)$. The portion
    that is in the neighborhood of $\talpha$ has a length that is (up to an
    additive error) equal to
    \[
      \I(\tau_\alpha, \omega_\alpha) \ell(\alpha).
    \]
    The formula in part $(b)$ follows from adding these two estimates. 
    
    The only remaining point is that, in the above, the choice of
    $\tau_\alpha$ depends on $\beta$ is. However, we observe that the
    choice of $\tau_\alpha$ is not important and for any other segment
    $\tau_\alpha'$ perpendicular to $\alpha$ that spans the width of
    $U(\alpha)$, we have
    \[
     \I(\tau_\alpha, \omega_\alpha) \eadd \I(\tau_\alpha', \omega_\alpha).
    \]
    This finishes the proof. 
  \qedhere  

  \end{proof}

   Let $X \in \T$ and $P$ be a pair of pants in $X$ with boundary lengths
   at most $\ep_B$.  Roughly speaking, the following technical lemma states
   that if  a subsegment  $\omega$ of $\lambda$ intersects a closed 
   (non geodesic) curve $\gamma$ enough times and the consecutive intersection 
   points are far enough apart, then $\omega$ cannot be contained in $P$. 
    
  \begin{lemma} \label{Lem:Arc-In-Pants2}

    There exist $D_0 > 0$ and $K_0 > 0 $ with the following property. Let
    $\gamma$ be a simple closed curve ($\gamma$ may not be a geodesic) that
    intersects $\partial P$ and let $\omega$ be a simple geodesic in $P$
    where the consecutive intersections of $\omega$ with $\gamma$ are at
    least $D_0$ apart in $\omega$. Let $\bomega$ be a sub-arc of $\omega$
    with endpoints on $\gamma$, and let $\omega_1$ and $\omega_2$ be the
    connected components of $\omega \setminus \bomega$. Then at least one
    of $\bomega$, $\omega_1$, or $\omega_2$ has length bounded by $K_0 \,
    (\ell_X(\gamma)+1)$.

  \end{lemma}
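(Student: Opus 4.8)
The plan is to reduce everything to the structural description of geodesic arcs in a thin pair of pants provided by \lemref{Arc-In-Pants}. Write $\omega = \omega_\alpha \cup \omega_0 \cup \omega_\beta$ as in that lemma, where $\omega_0$ has length $O(1)$ and each of $\omega_\alpha$, $\omega_\beta$ lies in a standard collar and is monotone in its distance to the core. The key geometric observation is that a \emph{non-geodesic} simple closed curve $\gamma$ intersecting $\partial P$, once isotoped to have minimal intersection with the collars, meets any collar $U(\delta)$ (for $\delta\subset\partial P$) in a controlled way: each component of $\gamma\cap U(\delta)$ is an arc crossing the collar, and the monotone portion $\omega_\alpha$ can intersect such a family of crossing arcs at most a bounded number of times unless it actually exits and re-enters the collar — but $\omega_\alpha$ is connected and monotone, so it meets $\gamma$ in $O(1)$ points inside each collar. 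Hence $\omega$ can meet $\gamma$ many times only near the middle segment $\omega_0$, which has length $O(1)$; but consecutive intersections of $\omega$ with $\gamma$ are $\geq D_0$ apart, so for $D_0$ large enough $\omega_0$ contains at most one intersection point with $\gamma$.

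From here I would argue as follows. Because $\omega$ meets $\gamma$ at most $O(1)$ times in each collar and at most once in $\omega_0$, the total number of intersection points of $\omega$ with $\gamma$ is bounded by a universal constant $N_0$. Now consider the sub-arc $\bomega$ of $\omega$ with endpoints on $\gamma$ and the two complementary pieces $\omega_1,\omega_2$. Each of $\bomega,\omega_1,\omega_2$ is a geodesic sub-arc of $\omega$ whose \emph{interior} crosses $\gamma$ at most $N_0$ times, hence crosses $\partial P$ at most $2N_0 + O(1)$ times (each time $\omega$ enters or leaves a collar it must either cross $\gamma$ or its length there is bounded). A geodesic sub-arc in $P$ that crosses $\partial P$ only boundedly many times and crosses $\gamma$ only boundedly many times has its length controlled: the collar portions contribute at most (number of crossings of the perpendicular $\tau$) times $\ell(\alpha)$ plus the collar width, and the number of times it winds around a core $\delta$ is comparable to its number of crossings with $\gamma$ — because $\gamma$ itself crosses $\partial P$ and so a long monotone winding sub-arc of $\omega$ inside $U(\delta)$ would have to cross $\gamma$ many times, contradicting the bound $N_0$. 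Thus each such sub-arc has length $\prec \ell_X(\gamma) + 1$, and in particular one can take $K_0$ to be a universal constant. Actually one does not even need to save a sub-arc: all three pieces will satisfy the bound, but the statement only asks for one, so this is more than enough.

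The main obstacle I anticipate is the bookkeeping around the winding of $\omega$ inside a collar $U(\delta)$ against the non-geodesic curve $\gamma$. The clean fact one wants is: \emph{a monotone (distance-to-core) sub-arc of $\omega$ inside $U(\delta)$ that winds $k$ times around $\delta$ must cross $\gamma$ at least roughly $k - O(1)$ times}, since $\gamma$ crosses the collar and each full wind of $\omega$ sweeps once around and so meets each crossing strand of $\gamma$ once. Pinning this down requires passing to the annular cover $\hat A_\delta$, noting that $\gamma$ lifts to arcs joining the two boundary components (because $\gamma$ crosses $\delta$), and comparing intersection numbers there with $\twist_\delta$; this is exactly the kind of estimate recorded in \lemref{Arc-In-Pants} and in the twisting discussion of Section \ref{Sec:Background}, so it should go through, but getting the constants $D_0$ and $K_0$ to be genuinely universal (and in particular not depending on $\ell_X(\gamma)$) needs care. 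Once that winding estimate is in hand, the combination with the $O(1)$ bound on $\ell_X(\omega_0)$ and the separation hypothesis $D_0$ finishes the proof with only routine additivity of lengths along $\omega$.
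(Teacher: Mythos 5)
There is a genuine gap. The central quantitative claim of your second paragraph --- that the total number of intersections of $\omega$ with $\gamma$ is bounded by a universal constant $N_0$ --- is false, and it is in fact contradicted by your own ``clean fact'' at the end (a monotone sub-arc winding $k$ times around a core must cross $\gamma$ roughly $k$ times whenever $\gamma$ crosses that collar). The separation hypothesis only gives $\I(\omega,\gamma)\le \ell_X(\omega)/D_0+1$, and $\omega$ can be arbitrarily long; concretely, if a single strand of $\gamma$ spirals $N$ times while crossing the collar of a very short boundary component $\alpha$, a radial piece $\omega_\alpha$ meets it about $N$ times with gaps of order $\log(1/\ell_X(\alpha))/N$, which exceed any fixed $D_0$ once $\ell_X(\alpha)$ is small, so no universal $N_0$ exists. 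Your closing remark that ``all three pieces will satisfy the bound'' is also false and shows the approach misses why the statement is a trichotomy: if $\gamma$ enters only one of the two collars met by $\omega$, the piece of $\omega$ winding inside the other collar never meets $\gamma$ and can be arbitrarily long, so at most one piece at each end can be controlled.

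The paper's argument is localized at each endpoint $p_i$ of $\bomega$ and contains a bootstrap that your plan lacks. The point $p_i$ splits the collar $U(\alpha_i)$ containing it into two sub-annuli; by the monotonicity in \lemref{Arc-In-Pants} one of them contains the collar portion of $\omega_i$ and the other the collar portion $\bomega_i$ of $\bomega$, and since $\gamma$ passes through $p_i$ it must cross one of the two sub-annuli from boundary to boundary. For the arc $\eta$ (either $\omega_i$ or $\bomega_i$) lying in that crossed sub-annulus $V$, both terms of $\ell_X(\eta)\eadd \I(\eta,\tau_\alpha)\,\ell_X(\alpha)+d_V(\eta)$ are bounded by the crossing sub-arc $\sigma$ of $\gamma$, namely $d_V(\eta)\ladd\ell_X(\sigma)$ and $\I(\eta,\tau_\alpha)\ladd \I(\eta,\gamma)+\ell_X(\sigma)/\ell_X(\alpha)$; the separation hypothesis enters exactly once as $\I(\eta,\gamma)\le\ell_X(\eta)/D_0$, yielding the self-improving inequality $\bigl(1-\ep_B/D_0\bigr)\ell_X(\eta)\ladd 2\,\ell_X(\gamma)$, which closes because $D_0>\ep_B$. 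This endpoint dichotomy (either $\bomega_i$ or $\omega_i$ is controlled, for each $i$) is what produces the ``at least one of the three'' conclusion; a global intersection count cannot replace it.
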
 

  \begin{proof}
 
    For $i= 1,2$, let $p_i$ be the endpoints of $\bomega$ that is also an
    endpoint of $\omega_i$. Let $\alpha_i$ be the boundary curve of $P$
    with $p_i \in U(\alpha_i)$, where $U(\alpha_i)$ is the standard collar
    of $\alpha_i$. If $p_i$ does not belong in any
    of the collar neighborhoods of a curve in $\partial P$, then we choose
    $\alpha_i$ arbitrarily. For simplicity, denote $U(\alpha_i)$ by $U_i$
    and let $\bomega_i$ be the component of $\bomega \cap U_i$ with
    endpoint $p_i$ ($\bomega_i$ may be empty). It is enough to show that,
    for $i=1,2$, we have either 
    \begin{equation} \label{Eqn:Dichotomy} 
      \ell_X(\bomega_i) \prec  \ell_X(\gamma) \quad \text{or}\quad 
      \ell_X(\omega_i) \prec   \ell_X(\gamma).
    \end{equation}
    This is because if $\omega_1$ and $\omega_2$ are both very long, then
    \eqnref{Dichotomy} implies that $\bomega_1$ and $\bomega_2$ both have
    length bounded by $\ell_X(\gamma)$ up to a small error. Since the
    middle part of $\bomega$ has bounded length (\lemref{Arc-In-Pants}),
    this implies the desired upper bound for the length of $\bomega$. 
    
    We now prove \eqnref{Dichotomy}. The point $p_i$ subdivides 
    $U_i$ into to two sets $V_i$ and  $W_i$ where $V_i$ and $W_i$ are regular 
    annuli (their boundaries are equidistance curves to $\alpha$) with disjoint
    interiors and $p_i$ is on the common boundary of $V_i$ and $W_i$. By
    part (c) of \lemref{Arc-In-Pants}, one of these annuli contains
    $\omega_i$ and the other contains $\bomega_i$. Also, since $\gamma$
    passes through $p_i$, it intersects both boundaries of either $V_i$ or
    $W_i$. 
    
    Let $V$ be either  $V_i$ or $W_i$ such that $\gamma$ intersect both
    boundaries of $V$ and let $\eta$ be either $\omega_i$ or $\bomega_i$
    that is contained in $V$. We want to show
    \[
      \ell_X(\eta) \prec  \ell_X(\gamma),
    \]
    which is equivalent to \eqnref{Dichotomy}.
    
    From \lemref{Arc-In-Pants} we have
    \begin{equation}\label{Eqn:OmegaLength}
      \ell_X(\eta)\eadd
      \I(\eta,\tau_\beta)\ell_X(\alpha)+d_V(\eta).
    \end{equation}
    Let $\sigma$ be the geodesic representative of the sub-arc of $\gamma$
    connecting the boundaries of $V$. Then
    \begin{equation}\label{Eqn:Sigma}
      d(\eta ,V)\ladd \ell_X(\sigma)\leq \ell_X(\gamma). 
    \end{equation}
    The intersection numbers between arcs in an annulus satisfy the
    triangle inequality up to a small additive error. Hence, 
    \begin{equation}\label{Eqn:OmegaTau}
      \I(\eta,\tau_\alpha)\ladd
      \I(\eta,\sigma)+\I(\sigma,\tau_\alpha)\leq
      \I(\eta,\gamma)+\I(\sigma,\tau_\alpha). 
    \end{equation}
    
    Let $D_0  > \ep_B$ be any constant. By assumption, consecutive
    intersections of $\omega$ with $\gamma$ are at least $D_0$ apart in
    $\omega$. We have $\I(\eta,\gamma) \leq \frac {1}{D_0} \ell_X(\eta)$.
    Also, $\I(\sigma, \tau_\alpha) \le
    \frac{\ell_X(\sigma)}{\ell_X(\alpha)}+1$. These facts and
    \eqnref{OmegaTau} imply 
    \[
      \I(\eta,\tau_\alpha) \ladd \frac {1}{D_0} \ell_X(\eta) +
      \frac{\ell_X(\sigma)}{\ell_X(\alpha)}.
    \]
    Combining this with \eqnref{OmegaLength} and \eqnref{Sigma} we have
    \begin{align*}
      \ell_X(\eta) & \ladd
      \left( \frac  {1}{D_0} \ell_X(\eta) + \frac{\ell_X(\sigma)}{\ell_X(\alpha)}\right) 
      \ell_X(\alpha) + \ell_X(\gamma)  \\
      & \leq 
      \frac{\ep_B}{D_0}\ell_X(\eta)+2\ell_X(\gamma). 
    \end{align*}
    That is,     
    \[
      \left( 1 - \frac {\ep_B}{D_0} \right)  \ell_X(\eta ) \ladd 2
      \ell_X(\gamma). 
    \]
    The constant $1-\frac{\ep_B}{D_0}$ is positive, since $D_0 > \ep_B$. So
    taking $K_0$ sufficiently larger than $2 \left(1-\frac{\ep_B}{D_0}
    \right)^{-1}$ finishes the proof.  \qedhere

  \end{proof} 
 
  \begin{lemma} \label{Lem:Homotopic}

    Let $X\in \T$ and let $\alpha$ be a simple closed geodesic in the
    hyperbolic metric of $X$.  Let $\omega$ be a simple geodesic arc in
    $X$.  If  $$\I(\omega,\alpha)\geq 3,$$ then  any  curve $\gamma$ in the
    homotopy class of $\alpha$ that is disjoint from $\alpha$ intersects
    $\omega$ at least once.

  \end{lemma}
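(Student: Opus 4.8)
I want to show: if $\alpha$ is a simple closed geodesic on $X$, $\omega$ a simple geodesic arc with $\I(\omega,\alpha)\ge 3$, and $\gamma$ is a curve freely homotopic to $\alpha$ with $\gamma$ disjoint from $\alpha$, then $\gamma$ must intersect $\omega$. The plan is to argue by contradiction: suppose $\gamma\cap\omega=\emptyset$.

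Let me sketch the proof plan.

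**Proof sketch.** Suppose for contradiction that $\gamma$ is disjoint from both $\alpha$ and $\omega$. Since $\gamma$ is freely homotopic to $\alpha$ and disjoint from it, the curves $\alpha$ and $\gamma$ cobound an embedded annulus $A$ in $X$ (they are the two boundary components of an embedded annulus, because two disjoint homotopic simple closed curves on a surface always cobound an annulus). Now consider the arc $\omega$. Its endpoints lie off $\gamma$ and off $\alpha$. Each time $\omega$ crosses $\alpha$, since $\omega$ cannot cross $\gamma$, the arc of $\omega$ entering the annulus $A$ through $\alpha$ must exit $A$ again through $\alpha$ (it cannot exit through $\gamma$). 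So the intersections of $\omega$ with $\alpha$ come in pairs connected by sub-arcs of $\omega$ lying inside $A$, together with possibly the two end-pieces of $\omega$; but an arc with both endpoints on one boundary circle of an annulus, lying in the annulus, is either boundary-parallel (and can be homotoped off $\alpha$) or is inessential. The plan is to use this to contradict $\I(\omega,\alpha)\ge 3$: essentially, after pushing $\omega$ off $A$, every geometric intersection with $\alpha$ can be removed except possibly one or two coming from the ends, giving $\I(\omega,\alpha)\le 2$. More carefully: an essential simple arc in an annulus with both feet on the same boundary component does not exist (it would bound a disk or be boundary-parallel), so each innermost such sub-arc of $\omega\cap A$ can be isotoped across $A$ to eliminate two intersection points with $\alpha$ without creating new ones; iterating, $\omega$ is isotoped rel endpoints to an arc meeting $\alpha$ in at most the contributions from the two ends, i.e.\ at most $2$ points. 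This contradicts $\I(\omega,\alpha)=\I(\omega^*,\alpha)\ge 3$, since the geodesic representative realizes the minimal intersection number.

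**Main obstacle.** The delicate point is the bookkeeping of the end-pieces of $\omega$ and making sure that the isotopies removing intersections are valid in the presence of the arc's fixed endpoints, and that the count genuinely lands at $\le 2$ rather than something larger. One needs to be careful that $\gamma$ need not be a geodesic, so the annulus $A$ bounded by $\alpha$ and $\gamma$ is not necessarily ``nice'' — but topologically it is still an embedded annulus, which is all that is needed. I would also want to handle the case where $\omega$ spirals toward $\alpha$ or has an endpoint on $\alpha$; but since $\I(\omega,\alpha)$ is assumed finite and at least $3$, $\omega$ genuinely crosses $\alpha$ transversally, and the argument above applies. The threshold $3$ (rather than $1$) is exactly what pays for the two possible end-contributions of the arc.

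I would write this up in roughly half a page, emphasizing the annulus-cobounding step and the innermost-arc reduction, and treating the end-piece count explicitly.
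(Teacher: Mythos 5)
Your proposal is correct and takes essentially the same route as the paper: both rest on the embedded annulus $A$ cobounded by $\alpha$ and $\gamma$ plus the no-bigon property of geodesics, since a component of $\omega\cap A$ with both endpoints on $\alpha$ cuts off a bigon between the geodesics $\omega$ and $\alpha$, and the hypothesis $\I(\omega,\alpha)\geq 3$ exactly pays for the at most two end-pieces of the arc. The paper phrases this directly (at the middle intersection point $p_2$, one of the two adjacent segments enters the interior of $A$ and must therefore exit through $\gamma$) rather than via your innermost-disk reduction, but the content is identical.
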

  \begin{remark}
     Note that the statement is sharp in the sense that  if $\gamma$
      is not disjoint from $\alpha$ or if $\omega$ intersects $\alpha$ 
      less than three times, $\gamma$ can be disjoint from $\omega$.

  \end{remark}
  \begin{proof}

    Curves $\alpha$ and $\gamma$ as above bound an annulus $A$ in $X$.
    Suppose $\omega$ intersects $\alpha$ at points $p_1, p_2$ and $p_3$,
    the points being ordered by their appearances along $\omega$. For
    $i=1,2$, let $[p_i,p_{i+1}]$ be the segment of $\omega$ between $p_i$
    and $p_{i+1}$. Since $\alpha$ and $\omega$ are geodesics, their
    segments cannot form bigons. Therefore one of the segments $[p_1,p_2]$
    and $[p_2,p_3]$ intersects the interior, and therefore both boundary
    components of $A$. \qedhere

  \end{proof}

  \subsection{}

  We now prove several useful facts about geodesics in the hyperbolic plane.   
  
  \begin{proposition} \label{Prop:Sandwich}
    
    Let $\phi \colon \HH \to \HH$ be a hyperbolic isometry with axis
    $\gamma$ and translation length $\ell(\phi) \le \ep$, for some $\ep>0$. 
    Suppose $\alpha$ and $\beta$ are two geodesic lines such that the
    following two conditions hold: 
    \begin{itemize}

      \item[(1)] $\alpha$ and $\beta$ intersect $\gamma$ at the points $a$ and
      $b$, respectively, with $\dH(a,b) \le \ep$.

      \item[(2)] Let $c$ be the point on $\beta$ which is closest to $\alpha$
      (the intersection point between $\alpha$ and $\beta$ if they
      intersect). Assume $\dH(c,b) \ge 2 \ep$. 

    \end{itemize}
  
    Fix an endpoint $\beta_+$ of $\beta$ and let $\alpha_+$ be the endpoint
    of $\alpha$ that is on the same side of $\gamma$ as $\beta_+$. Then,
    there exists $k\in \ZZ$ such that $\alpha_+$ is between $\beta_+$ and
    $\phi^k(\beta_+)$ and $|k| \ell(\phi) \le 4\ep+3$. 
    
  \end{proposition}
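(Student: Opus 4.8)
We need to show that when we apply powers of $\phi$ to the endpoint $\beta_+$, we eventually step past $\alpha_+$, and this happens within a controlled number of steps. Let me think about the geometry.

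Setup: $\gamma$ is the axis of $\phi$, translation length $\le \epsilon$. We have two geodesics $\alpha$, $\beta$ crossing $\gamma$ at $a$, $b$ respectively, with $d_\HH(a,b) \le \epsilon$. The point $c$ on $\beta$ closest to $\alpha$ satisfies $d_\HH(c,b) \ge 2\epsilon$.

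Key observation: The powers $\phi^k(b)$ march along $\gamma$ in steps of size $\ell(\phi)$, so the endpoints $\phi^k(\beta_+)$ on the circle at infinity are "the same geodesic $\beta$ translated along $\gamma$." I want to compare where $\alpha_+$ sits relative to these.

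Here's the approach. Work on the circle at infinity $\partial \HH^2$, which $\gamma$ divides into two arcs. On the side containing $\beta_+$ (and hence $\alpha_+$), I want to produce $k$ with $\phi^k(\beta_+)$ on the far side of $\alpha_+$ from $\beta_+$.

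Step 1: Estimate how far $\alpha_+$ is from $\beta_+$ "measured along $\gamma$." Since $\alpha$ and $\beta$ both hit $\gamma$ near each other (within $\epsilon$), but $\alpha$ and $\beta$ are a definite distance apart at $c$ (at least $2\epsilon$), the angles that $\alpha$ and $\beta$ make with $\gamma$ at $a$ and $b$ cannot be too close to each other — otherwise $\alpha$ and $\beta$ would stay close. More precisely: if $\alpha$ passed through $b$ at the same angle as $\beta$, $\alpha$ would literally be $\beta$; and nearby angles / nearby crossing points keep the geodesics close for a long stretch. The condition $d_\HH(c,b) \ge 2\epsilon$ forces $\alpha$ to "peel away" from $\beta$, meaning the foot of $\alpha$ at infinity, $\alpha_+$, is separated from $\beta_+$.

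Step 2 (the quantitative heart): Translate "separation at infinity" into a count of $\phi$-steps. The natural tool is: for a point $x$ on $\gamma$, the geodesic through $x$ perpendicular to $\gamma$ has its endpoint at infinity, and as $x$ moves along $\gamma$ by hyperbolic distance $t$, that endpoint moves. Rather than work with angles directly, I would bound things as follows. The geodesic $\beta$ and its translates $\phi^k(\beta)$ are disjoint (they're translates of one geodesic crossing the axis — actually they might cross, but their crossing points with $\gamma$ are spaced by $\ell(\phi)$). I claim: once $\phi^k(b)$ has moved past $a$ along $\gamma$ by a distance exceeding roughly $d_\HH(c,b) + d_\HH(a,b) + O(1)$, the geodesic $\phi^k(\beta)$ must have its endpoint $\phi^k(\beta_+)$ on the far side of $\alpha_+$. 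The reason: if $\phi^k(\beta_+)$ were still on the near side, then $\phi^k(\beta)$ would have to cross $\alpha$, but $\phi^k(\beta)$ crosses $\gamma$ at a point that is far from $a$ (by $\ge$ that distance), and a geodesic crossing $\gamma$ far from $a$ can only reach $\alpha$ (which is "anchored" near $a$ with its far end at $\alpha_+$) if it does so near infinity — contradicting that $\alpha$ peels away from $\beta$ by $\ge 2\epsilon$ at $c$. Making this rigorous uses the thin-triangles property of $\HH^2$ (slimness constant $\delta_\HH$): the triangle with vertices $a$, $c$, and a suitable ideal vertex forces the relevant sides to be close, pinning down the geometry.

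Step 3 (bookkeeping): Since each application of $\phi$ moves the crossing point along $\gamma$ by exactly $\ell(\phi)$, and we need to move it a distance at most (something like) $4\epsilon + O(1)$ — precisely $4\epsilon + 3$ after optimizing the constants — we get $|k|\,\ell(\phi) \le 4\epsilon + 3$. The sign of $k$ is chosen according to which direction along $\gamma$ moves $b$ toward $a$ and beyond; the "betweenness" of $\alpha_+$ follows because $\beta_+$ is on the near side (step $0$) and $\phi^k(\beta_+)$ is on the far side (step $k$), and the translates move monotonically along the boundary circle. I expect the main obstacle to be step 2: converting the metric hypotheses ($d_\HH(a,b) \le \epsilon$, $d_\HH(c,b) \ge 2\epsilon$, $\ell(\phi) \le \epsilon$) into the precise bound $4\epsilon + 3$ rather than a sloppy $O(\epsilon)+O(1)$. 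This will require a careful hyperbolic-trigonometry estimate — comparing the geodesic $\alpha$ to the perpendicular from $\gamma$ at the appropriate point, and using that in a right(ish) hyperbolic triangle the relevant distances and the position of the ideal vertex are linked by $\cosh$ and $\sinh$ relations — but the logarithmic/exponential decay of horocyclic distances means the "$+3$" absorbs all the lower-order terms once $\epsilon$ is not enormous, and a separate trivial check handles large $\epsilon$.
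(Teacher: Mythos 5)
Your overall strategy --- translate $\beta$ by powers of $\phi$ until its endpoint at infinity passes $\alpha_+$, and show that a bounded displacement along $\gamma$ suffices --- is the same as the paper's. But the quantitative heart of your argument, which you yourself flag as the main obstacle, is both missing and, as stated, wrong. In Step~2 you claim that the endpoint $\phi^k(\beta_+)$ is guaranteed to have passed $\alpha_+$ once $\phi^k(b)$ has moved past $a$ by roughly $\dH(c,b)+\dH(a,b)+O(1)$. The hypothesis only gives a \emph{lower} bound $\dH(c,b)\ge 2\ep$; there is no upper bound, so $\dH(c,b)$ can be arbitrarily large and your threshold does not yield $|k|\,\ell(\phi)\le 4\ep+3$. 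Step~3 then simply asserts that a displacement of $4\ep+O(1)$ is enough, which does not follow from Step~2; the gap between these two statements is exactly the content of the proposition.

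The paper closes this gap with an explicit hyperbolic-trigonometry computation that is uniform in $\dH(c,b)$. Taking $k$ with $3\ep+3\le k\,\ell(\phi)\le 4\ep+3$ and $b'=\phi^k(b)$, the law of cosines shows the rays $\overrightarrow{a\alpha_+}$ and $\overrightarrow{b'\phi^k(\beta_+)}$ are disjoint provided
\[
\cosh \dH(a,b') \;\ge\; \frac{1+\cos A\cos B}{\sin A\sin B},
\]
where $A,B$ are the angles of the triangle $abc$ at $a$ and $b$. The right-hand side is controlled by the hyperbolic sine rule, $\sin A/\sin B=\sinh \dH(c,b)/\sinh \dH(c,a)$, together with $\dH(c,a)\ge \dH(c,b)-\ep\ge\ep$ and the monotonicity of $x\mapsto \sinh(x+y)/\sinh(x)$; this gives a bound of $8e^{2\ep}$ \emph{independent of} $\dH(c,b)$, which is beaten by $\cosh(2\ep+3)$. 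That uniformity in $\dH(c,b)$ is precisely what your sketch lacks, and a thin-triangles ($\delta_\HH$) argument with an ideal vertex, as you propose, would at best reproduce your additive $\dH(c,b)$ dependence rather than eliminate it. Also note that the case where $\alpha$ and $\beta$ are disjoint (so $c$ is not an intersection point) needs to be treated separately, as the paper does.
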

  
  \begin{figure}[ht]
  \setlength{\unitlength}{0.01\linewidth}
  \begin{picture}(100, 40)
  \put(25,-3){
  \includegraphics{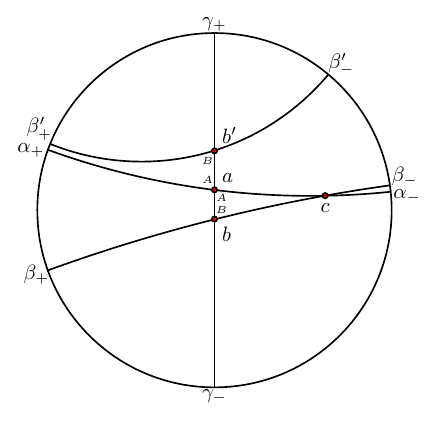}
  }
  \end{picture}
  \caption{}
  \label{Fig:Sandwich}
  \end{figure} 
 
  \begin{proof}

    We assume $\beta_+$ is the endpoint of the ray $\overrightarrow{cb}$
    (see \figref{Sandwich}). The case when $\beta_+$ is the endpoint of
    $\overrightarrow{bc}$ is similar. 
    
    By exchanging $\phi$ with $\phi^{-1}$ if necessary we may assume
    $\alpha_+$ is between $\beta_+$ and the attracting fixed point
    $\gamma_+$ of $\phi$. Let $k$ be a positive integer with 
    \[
      3 \ep + 3 \le k\ell(\phi) \le 4\ep+3.
    \]
    Such a $k$ exists since $\ell(\phi) \leq \ep$. Let $\beta' =
    \phi^k(\beta)$, $b' = \phi^k(b)$ and $\beta_+' = \phi^k(\beta_+)$. 
    
    Since $k \ell(\phi) \geq \ep$, we know $a$ is between $b$ and $b'$.
    To show $\alpha_+$ is between $\beta_+$ and $\beta'_+$ we need to show
    that the rays $\overrightarrow{a\alpha_+}$ and
    $\overrightarrow{b'\beta'_+}$ do not intersect. Note that
    \[
      \dH(b',a) = \dH(b',b) -\dH(a,b) \geq (3\ep + 3) - \ep = 2 \ep +
      3 > \dH(a,b).
    \]
    Let $A = \angle \alpha_-a \gamma_- = \angle \alpha_+ a \gamma_+$ and $B
    = \angle \beta_-b\gamma_+ = \angle \beta_+' b' \gamma_-$. If $\alpha$
    and $\beta$ are disjoint then $\overrightarrow{a\alpha_+}$ and
    $\overrightarrow{b'\beta'_+}$ are also disjoint and we are done. Hence,
    we can assume $\alpha$ and $\beta$ intersect at $c$. 
    
    Using the law of cosines, $\overrightarrow{a\alpha_+}$ and
    $\overrightarrow{b'\beta'_+}$ do not intersect if  
    \begin{equation} \label{Eqn:Cosine}
      \sin A \sin B \cosh{\dH(a,b')}- \cos A \cos B \ge \cos 0=1. 
    \end{equation}
    Since $\dH(a,b') \ge 2\ep+3$, 
    \[
      \cosh{\dH(a,b')} \ge \cosh \big( 2 \ep +3 \big)
      >\frac {e^{2 \ep + 3}}2.
    \]
    Hence, by \eqnref{Cosine}, it suffices to show that
    \begin{equation} \label{Eqn:LargeK}
      \frac {e^{2 \ep + 3}}2 \geq  \frac{1+\cos A \cos B } {\sin A \sin B}.
    \end{equation}
    
    Before starting the calculations, we make an elementary observation.
    For any $y>0$, the function \[ f(x) = \frac{\sinh(x+y)}{\sinh x}\] is 
    decreasing and $f(x) \le 2e^y$ for all $x \ge y$.  This is because 
    $f'(x) = \frac{-\sinh(y)}{\sinh^2(x)}<0$.  Hence, for all $x \ge y$,
    \[
      f(x) \le f(y) = \frac{\sinh 2y}{\sinh y} = 2 \cosh y \le 2e^y.
    \]

    We argue in 3 cases. Suppose $A > \pi/2$. Since $\dH(c,a) \ge \dH(c,b)
    - \dH(a,b) \ge \ep$, we have
    \begin{align*}
      \frac{1+\cos A \cos B }{\sin A \sin B}
      &=
      \frac{1-\cos (\pi-A) \cos B}{\sin A \sin B}
      \le
      \frac{\sin^2 \big( \max\{A,B\} \big)}
      {\sin^2 \big( \min\{A,B\} \big)} \\
      &=
      \frac{\sin^2 A}{\sin^2B} 
      =
      \frac{\sinh^2 \dH(c,b)}{\sinh^2 \dH(c,a)} 
      \le 
      \frac{\sinh^2 \big(\dH(c,a)+\ep \big)}{\sinh^2 \dH(c,a) } \le 4e^{2\ep} 
    \end{align*}
    Similarly, if $B \ge \pi/2$ instead,  then 
    \[
      \frac{1+\cos A \cos B }{\sin A \sin B} \le \frac{\sinh^2 \big(
      \dH(c,b)+\ell(\phi) \big)}{\sinh^2 \big( \dH(c,b) \big)} \le
      4e^{2\ep}.
    \] 

    In the case that both $A$ and $B$ are at most $\pi/2$, let $w$ be the
    point on the segment $\overline{ab}$ which is the foot of the
    perpendicular from $c$ to $\overline{ab}$. We have 
    \begin{align*}
      \frac{1+\cos A \cos B }{\sin A \sin B} 
      &\le 
      \frac{2}{\sin A \sin B} 
      =  
      2 \frac{\sinh \dH(c,a)}{\sinh \dH(c,w)} \frac{\sinh
      \dH(c,b)}{\sinh \dH(c,w)} \\ 
      & \le 
      2 \frac{\sinh^2 \big( \dH(c,w) + \ell(\phi) \big)}{\sinh^2 \dH(c,w)} 
      \le 8e^{2\ep}.
    \end{align*}
    But $8e^{2\ep} < \frac {e^{2 \ep + 3}}2$ and we are done. \qedhere
    
  \end{proof}
  
  We need some definitions for the next two lemmas. Let $\gamma$ be a
  geodesic in $\HH$ with endpoints $\gamma_+$ and $\gamma_-$. Fix a
  $\delta$--neighborhood $U$ of $\gamma$ and let $p$ be any point on the
  boundary of $U$. The geodesic through $p$ with endpoint $\gamma_+$ and
  the geodesic through $p$ with endpoint $\gamma_-$ together subdivide
  $\HH$ into four quadrants. The quadrant disjoint from the interior of $U$
  will be called the \emph{upper quadrant} at $p$, and the quadrant
  diametrically opposite will be called the \emph{lower quadrant} at $p$. 
  
  \begin{lemma}\label{Lem:Quadrant}
    
    For every $\delta_0>0$ and $M>0$, there is $d_0>0$ such that the
    following holds (see left side of \figref{Quadrant}). Fix a geodesic
    $\gamma$ in $\HH$ and let $U$  be the $\delta$-neighborhood of $\gamma$
    with $\delta\geq \delta_0$. Let $p$ and $q$ be points on the same
    boundary component of $U$. Suppose $\dH(p,q)\geq M$. Then any point in
    the upper quadrant $\calQ_p$ at $p$ is  at least $d_0$ away from any
    point in the lower quadrant $\calQ_q$ at $q$.
    
  \end{lemma}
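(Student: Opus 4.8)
The plan is to transfer everything to the upper half–plane $\HH$ and reduce to an explicit computation. Normalize by an isometry so that $\gamma$ is the imaginary axis with ideal endpoints $\gamma_-=0$, $\gamma_+=\infty$, and $U$ is the Euclidean wedge $\{\,re^{i\theta}:\theta_0\le\theta\le\pi-\theta_0\,\}$ with $\cos\theta_0=\tanh\delta$; since $\delta\ge\delta_0$ we have $\cos^{2}\theta_0\ge\tanh^{2}\delta_0=:c_0\in(0,1)$. As $p,q$ lie on the same boundary component of $U$, a reflection (in $\gamma$) lets us put both on the ray $\theta=\theta_0$, and a scaling lets us take $p=e^{i\theta_0}$, so $q=\rho\,e^{i\theta_0}$ with $\rho\ne 1$ (because $\dH(p,q)\ge M>0$). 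Finally the isometry $z\mapsto 1/\bar z$ preserves $\gamma$ and $U$, fixes $p$, and replaces $\rho$ by $\rho^{-1}$, so we may assume $\rho>1$.

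Next, identify the two quadrants. Write $L_p^{\pm}$ for the complete geodesics through $p$ and $\gamma_{\pm}$, so $L_p^{+}=\{\operatorname{Re}z=\cos\theta_0\}$ and $L_p^{-}$ is the semicircle from $0$ to $1/\cos\theta_0$ through $p$; likewise $L_q^{\pm}$ for $q$, with $L_q^{-}$ the semicircle from $0$ to $b:=\rho/\cos\theta_0$. A direct check — the other three components of $\HH\setminus(L_p^{+}\cup L_p^{-})$ all meet $\{\theta_0<\theta<\pi-\theta_0\}$ — shows that the component disjoint from $\operatorname{int}U$ is $\calQ_p=\{\operatorname{Re}z>\cos\theta_0\}\cap\{z\text{ inside }L_p^{-}\}$, the ideal triangle with vertices $p,\cos\theta_0,1/\cos\theta_0$, which lies in $\{\theta<\theta_0\}$. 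The diametrically opposite component at $q$ is $\calQ_q=\{\operatorname{Re}z<\rho\cos\theta_0\}\cap\{z\text{ outside }L_q^{-}\}$, the ideal triangle with vertices $q,\gamma_-,\gamma_+$, and $\gamma\subset\calQ_q$. Since $\rho>1$, the Euclidean disc bounded by $L_p^{-}$ lies inside the one bounded by $L_q^{-}$, so $\calQ_p$ and $\calQ_q$ lie on opposite sides of the geodesic $L_q^{-}$; as $L_q^{-}$ contains an edge of $\calQ_q$, every path between them crosses $L_q^{-}$ and we get $\dH(\calQ_p,\calQ_q)\ge\dH(\calQ_p,L_q^{-})$.

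To evaluate $\dH(\calQ_p,L_q^{-})$, push forward by the Möbius transformation $w=z/(b-z)$, which sends $L_q^{-}$ to the positive imaginary axis and the half-plane inside $L_q^{-}$ to the open first quadrant, and sends $p,\cos\theta_0,1/\cos\theta_0$ to a point $w_p$ of argument $\psi_p\in(0,\pi/2)$ and to two positive reals. The two edges of $w(\calQ_p)$ are sub-arcs of the geodesics with ideal endpoints $\{-1,w(\cos\theta_0)\}$ and $\{0,w(1/\cos\theta_0)\}$; along each of these the argument from the origin is monotone, so $w(\calQ_p)$ lies in the sector $\{0<\arg w\le\psi_p\}$. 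Because $\dH(w,i\RR_{+})=\operatorname{arccosh}(1/\sin\arg w)$ is decreasing in $\arg w$ on $(0,\pi/2]$, this yields $\dH(\calQ_p,L_q^{-})=\dH(p,L_q^{-})$.

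It remains to bound $\dH(p,L_q^{-})$. With $M'=\dH(p,q)\ge M$, the distance-to-a-geodesic formula gives $\cosh^{2}\dH(p,L_q^{-})=1+\dfrac{2(\cosh M'-1)\cos^{2}\theta_0}{\rho}$, while $\cosh M'=1+\dfrac{(\rho-1)^{2}}{2\rho\sin^{2}\theta_0}$ forces $\rho+\rho^{-1}=2\bigl(1+(\cosh M'-1)\sin^{2}\theta_0\bigr)$, hence $\rho\le 2\bigl(1+(\cosh M'-1)\sin^{2}\theta_0\bigr)$. Substituting this in and using $\cos^{2}\theta_0\ge c_0$, $\sin^{2}\theta_0\le1$, $\cosh M'\ge\cosh M$, together with the monotonicity of the resulting expression in $\cosh M'-1$ and in $\cos^{2}\theta_0$, we obtain $\cosh^{2}\dH(p,L_q^{-})\ge 1+\dfrac{(\cosh M-1)\,c_0}{1+(\cosh M-1)(1-c_0)}>1$, a bound depending only on $\delta_0$ and $M$; take $d_0$ to be the $\operatorname{arccosh}$ of its square root. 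Then $\dH(\calQ_p,\calQ_q)\ge d_0$, as required. The delicate points are the bookkeeping that pins down which of the four components is the upper (respectively lower) quadrant and the claim that $p$ realizes $\dH(\calQ_p,L_q^{-})$: one cannot argue this by convexity, since the sector $\{\arg w\le\psi_p\}$ is \emph{not} convex, so the monotonicity of $\arg$ along the edge–geodesics is essential; the remaining estimates are routine once the constraint linking $\rho$ to $M'$ and $\delta$ is tracked carefully.
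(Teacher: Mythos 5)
Your proof is correct, and it takes a genuinely different route from the paper's. The paper argues softly: it reduces to bounding $\dH(p,\xi)$ for $\xi$ the nearest boundary geodesic of $\calQ_q$, observes this quantity is monotone in $\dH(p,q)$, fixes $\dH(p,q)=M$, and then combines a continuity-and-compactness argument for $\delta$ in a bounded range with a separate asymptotic argument (a geodesic asymptotic to the perpendicular $\nu$ eventually separates $p_s$ from $\xi_s$) for large $\delta$; no explicit constant comes out. You instead normalize in the upper half-plane and compute everything in closed form: you identify the quadrants explicitly, observe that the single complete geodesic $L_q^-$ separates $\calQ_p$ from $\calQ_q$ (the two tangent-at-$0$ disks nest because $\rho>1$), reduce $\dH(\calQ_p,L_q^-)$ to $\dH(p,L_q^-)$ via the sector/argument-monotonicity trick, and then bound $\dH(p,L_q^-)$ below using the constraint linking $\rho$, $M'$ and $\theta_0$. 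I checked the key identities: $\cos\theta_0=\tanh\delta$, $\cosh M'=1+(\rho-1)^2/(2\rho\sin^2\theta_0)$, and $\cosh^2\dH(p,L_q^-)-1=\cos^2\theta_0(\rho-1)^2/(\rho^2\sin^2\theta_0)=2(\cosh M'-1)\cos^2\theta_0/\rho$, as well as the monotonicity of $uc/(1+u(1-c))$ in both $u$ and $c$; all are correct, so your explicit $d_0$ is valid. Your approach buys an effective constant and, incidentally, supplies a justification (the sector containment) for the reduction from ``any point of $\calQ_p$'' to the point $p$ that the paper leaves implicit. The one place I would tighten the write-up: monotonicity of $\arg$ along the two edge-geodesics shows the \emph{boundary} of $w(\calQ_p)$ lies in the sector $\{0<\arg w\le\psi_p\}$; to conclude the same for the region itself, note that along each ray $\arg w=\phi$ from the origin the conditions ``outside the circle through $-1,a_1$'' and ``inside the circle through $0,a_2$'' are compatible only for $\phi\le\psi_p$, since the two circles meet in $\HH$ only at $w_p$. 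This is a one-line addition and does not affect the validity of the argument.
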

    
  \begin{proof}

    \begin{figure}[htp!]
    \setlength{\unitlength}{0.01\linewidth}
    \begin{picture}(100, 44)
    \put(1,-2){
    \includegraphics{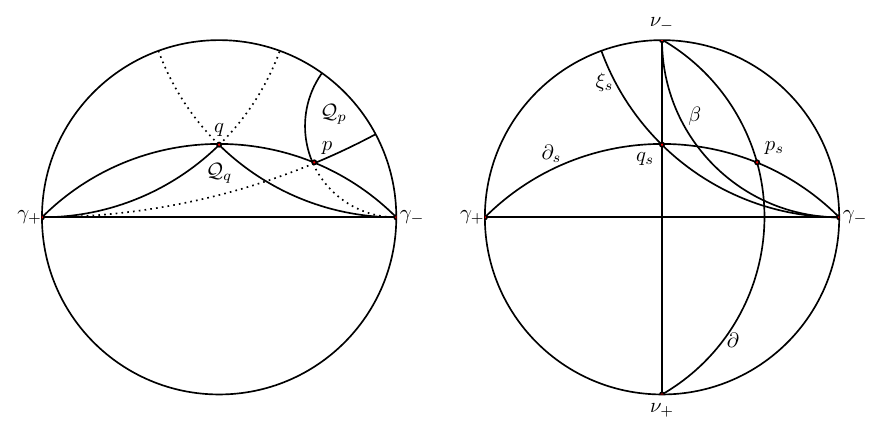} 
    }
    \end{picture}
    \caption{There is a lower bound on the distance between any point in
    $\calQ_p$ and any point in $\calQ_q$.}
    \label{Fig:Quadrant}
    \end{figure} 

    It suffices to find a lower bound for $\dH(p,\xi)$, where $\xi$ is the
    closest boundary component of $\calQ_q$ to $p$. Assume $\xi$ has the
    endpoint $\gamma_-$. Note that $\dH(p,\xi)$ increases with $\dH(p,q)$,
    since any geodesic from $\gamma_-$ that intersects the boundary of $U$
    between $p$ and $q$ separates $p$ from $\xi$.  
    
    Now fix $\dH(p,q) = M$ and see the right side of \figref{Quadrant} for
    the rest of the proof. Fix a perpendicular geodesic $\nu$ to $\gamma$
    and let $\partial$ be a boundary component of the $M$--neighborhood of
    $\nu$. Fix an endpoint $\nu_-$ of $\nu$ and let $\partial_s$ be a
    boundary curve of the $s$--neighborhood  of $\gamma$ contained in the
    complement $\HH \setminus \gamma$ determined by $\nu_-$. Let $q_s$ be
    the intersection of $\nu$ and $\partial_s$ and let $p_s$ be
    intersection of $\partial$ and $\partial_s$. Let $\xi_s$ be the
    geodesic through $q_s$ with endpoint $\gamma_-$ and let $\beta$ be the
    geodesic connecting $\gamma_-$ and $\nu_-$. Since $\beta$ is asymptotic
    to $\nu$, $\dH(p_s,\beta)$ is increasing as a function of $s$, and for
    $s$ big enough, $\beta$ separates $p_s$ from $\xi_s$. Therefore, there
    exist $d_1>0$ and $s_1 \ge \delta_0$ such that $\dH(p_s,\xi_s)>
    \dH(p_s,\beta)>d_1>0$ for all $s\geq s_1$. Finally, since
    $\dH(p_s,\xi_s)$ is continuous as a function of $s$ and is only zero
    when $s=0$, it is bounded away from zero on the segment
    $[\delta_0,s_1]$. This finishes the proof. \qedhere

  \end{proof} 
   
  \begin{lemma} \label{Lem:Diverge}
     
    Given $\delta_0>0$ and $M>0$, the constant $d_0$ of \lemref{Quadrant}
    also satisfies the following property. Let $\gamma$ be a geodesic in
    $\HH$. Let $U$ be the $\delta$--neighborhood of $\gamma$ with
    $\delta\geq \delta_0$. Let $\omega_1$ be a geodesic intersecting
    $\gamma$ and let $\omega_2$ be the image of $\omega_1$ under an
    isometry fixing $\gamma$. Let $p_i$ be the intersection of $\omega_i$
    with a fixed boundary component of $U$. Let $\omega_1^+$ be the
    component of $\omega_1$ contained in the upper quadrant at $p_1$.
    Suppose $d_\HH(p_1, p_2) \geq M$. Then $d_\HH(\omega_1^+, \omega_2)
    \geq d_0$.
              
  \end{lemma}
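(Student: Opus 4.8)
The plan is to reduce Lemma~\ref{Lem:Diverge} to Lemma~\ref{Lem:Quadrant} by verifying that the hypotheses of the latter are met with the same constants $\delta_0$ and $M$. First I would set up the geometry: since $\omega_2$ is the image of $\omega_1$ under an isometry $\psi$ fixing $\gamma$ (setwise), $\psi$ is either a translation along $\gamma$ or a reflection in $\gamma$ or a composition thereof; in all cases $\psi$ preserves the boundary component of $U$ on which $p_1, p_2$ lie (or swaps the two, but then we reduce to the case it preserves them by possibly post-composing with the reflection in $\gamma$, which does not move the quadrants we care about). Thus $p_2 = \psi(p_1)$ and $p_1, p_2$ are two points on the same boundary component of $U$ with $\dH(p_1,p_2) \ge M$, exactly the configuration in Lemma~\ref{Lem:Quadrant}.

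Next I would locate the relevant quadrants. By definition, $\omega_1^+$ is the portion of $\omega_1$ in the upper quadrant $\calQ_{p_1}$ at $p_1$. The key observation is that $\omega_2 = \psi(\omega_1)$ has its ``opposite'' portion $\omega_2^- := \psi(\omega_1^+)$ contained in the lower quadrant $\calQ_{p_2}$ at $p_2$: since $\psi$ carries $\gamma$ to $\gamma$ and $U$ to $U$, it carries the upper quadrant at $p_1$ to the upper quadrant at $p_2$, hence $\psi(\omega_1^+) \subset \calQ_{p_2}^{\text{upper}}$. Here I need to be careful about which subarc of $\omega_2$ lies in the lower quadrant at $p_2$; the cleaner route is to note that $\omega_1$ enters the upper quadrant at $p_1$ on one side and the lower quadrant on the other, and to track where each piece goes under $\psi$. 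The entire arc $\omega_2$ lies in the closure of $\calQ_{p_2}^{\text{upper}} \cup \calQ_{p_2}^{\text{lower}}$ union the two bounding geodesics; the part of $\omega_2$ on the far side of $p_2$ from $U$'s interior — call it the piece facing away from $p_1$ — lies in $\calQ_{p_2}$ (the lower quadrant at $p_2$ in the notation of Lemma~\ref{Lem:Quadrant}, which was defined as the one diametrically opposite the upper). Applying Lemma~\ref{Lem:Quadrant} with these $\delta_0$, $M$ then gives $\dH(x,y) \ge d_0$ for every $x \in \calQ_{p_1}^{\text{upper}} \supset \omega_1^+$ and every $y \in \calQ_{p_2} \supset$ (that far piece of $\omega_2$), and since $\omega_1^+$ lies entirely in $\calQ_{p_1}^{\text{upper}}$ and $\omega_2$ meets the closed lower quadrant, we conclude $\dH(\omega_1^+,\omega_2) \ge d_0$.

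The main obstacle, and the step requiring the most care, is the bookkeeping about quadrants — specifically, confirming that \emph{all} of $\omega_2$ (not just a distinguished subarc) stays within the union of the upper and lower quadrants at $p_2$ together with their bounding geodesics, so that no stray part of $\omega_2$ sneaks close to $\omega_1^+$. This follows because $\omega_2$ is a single geodesic through $p_2$: it is split by $p_2$ into two rays, one contained in the closed upper quadrant at $p_2$ and one in the closed lower quadrant (a geodesic through a point on $\partial U$ cannot enter the two ``side'' quadrants, since those are cut off by the geodesics through $p_2$ asymptotic to $\gamma_\pm$, and our geodesic, being $\omega_1$ transported, is transverse to $\gamma$ and does not share an endpoint with $\gamma$). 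Once this is pinned down, the distance bound is immediate from Lemma~\ref{Lem:Quadrant} and no new estimate is needed; in particular the constant $d_0$ is literally the one produced there, which is why the statement asserts the \emph{same} $d_0$ works.
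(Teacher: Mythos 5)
There is a genuine gap, and it is exactly at the step you flag as ``the main obstacle.'' You correctly observe that $\omega_2$ splits at $p_2$ into a ray in the closed upper quadrant at $p_2$ and a ray in the closed lower quadrant at $p_2$, and \lemref{Quadrant} does control the distance from $\omega_1^+\subset\calQ_{p_1}^{\text{upper}}$ to the \emph{lower} ray. But your concluding sentence --- ``since $\omega_2$ meets the closed lower quadrant, we conclude $\dH(\omega_1^+,\omega_2)\ge d_0$'' --- is a non sequitur: $\dH(\omega_1^+,\omega_2)$ is an infimum over \emph{all} of $\omega_2$, so you must also bound the distance from $\omega_1^+$ to the ray $\omega_2^+$ lying in the \emph{upper} quadrant at $p_2$. \lemref{Quadrant} says nothing about upper-versus-upper: the two upper quadrants at distinct points of $\partial U$ always overlap (any point far enough from $\gamma$ on that side lies in both), so their set-theoretic distance is zero and no quadrant containment alone can give the bound. (Your setup is also internally inconsistent here: you first place $\psi(\omega_1^+)$ in the lower quadrant at $p_2$ and two lines later in the upper quadrant; since $\psi$ preserves $U$ and the ``away from $U$'' side, $\psi(\omega_1^+)=\omega_2^+$ lies in the upper quadrant at $p_2$, which is precisely the uncontrolled piece.)

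This is where the paper uses the hypothesis that $\omega_2$ is the image of $\omega_1$ under an isometry fixing $\gamma$ --- a hypothesis your argument never actually invokes beyond identifying $p_2=\psi(p_1)$. The paper's proof reduces to bounding $\dH(\omega_1^+,\omega_2^+)$ and then argues as follows: the function $(x,y)\mapsto\dH(x,y)$ on $\omega_1\times\omega_2\cong\RR^2$ is convex with minimum at the pair $(r_1,r_2)$ realizing $\dH(\omega_1,\omega_2)$, and the symmetry forces either both $r_i$ to lie in the lower quadrants (in which case the distance is monotone increasing along the upper rays starting from $p_1,p_2$, and $\dH(p_i,\omega_{3-i}^+)\ge d_0$ from \lemref{Quadrant} seeds the bound) or one $r_i$ in the lower and one in the upper quadrant (in which case \lemref{Quadrant} bounds the global minimum $\dH(r_1,r_2)\ge d_0$ directly). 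To repair your proposal you would need to add this symmetry-plus-convexity step, or some substitute for it; without it the claim does not follow from \lemref{Quadrant} alone.
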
 
          
  \begin{proof} 
    
    Let $\omega_2^+$ be the component of $\omega_2$ contained in the upper
    quadrant at $p_2$. By \lemref{Quadrant}, it is enough to show
    $\dH(\omega_1^+,\omega_2^+) \ge d_0$. Let $r_i$ be the point on
    $\omega_i$ such that $\dH(r_1,r_2)$ realizes the distance between
    $\omega_1$ and $\omega_2$. Because of the symmetry of $\omega_1$ and
    $\omega_2$ relative to a rotation fixing $\gamma$, either both $r_i$'s
    are contained in $U$, and hence each $r_i$ is contained in the lower
    quadrant at $p_i$, or one is in the lower quadrant and the other is in
    the upper quadrant. 
    
    Identify the space of pairs of $(x,y)$, where $x\in \omega_1$ and 
    $y \in \omega_2$, with $\RR^2$. Then the function $\RR^2 \to \RR$ 
    sending $(x,y)$ to $d_\HH(x,y)$ is a convex function 
    realizing its minimum at $(r_1,r_2) \in \RR^2$. In the first
    case, since $\dH(p_i,\omega_{3-i}^+) \ge d_0$ by \lemref{Quadrant} and
    the distance between $\omega_1^+$ and $\omega_2^+$ increases from $p_1$
    and $p_2$ on, and we can conclude $\dH(\omega_1^+,\omega_2^+) \ge d_0$. 
    In the second case, invoking \lemref{Quadrant} again implies $\dH(r_1, r_2)
    \geq d_0$, hence $\dH(\omega_1^+,\omega_2^+) \ge d_0$ by minimality of
    $\dH(r_1,r_2)$. \qedhere
   
  \end{proof}  

\section{A Notion of Being Sufficiently Horizontal}
  
  \label{Sec:Horizontal}

  Let $I$ be a closed connected subset of $\RR$, let $\GL \from I \to \T$
  be a Thurston geodesic and let $\lambda_{\GL}$ be its maximally stretched
  lamination. The main purpose of this section is to develop a notion of a
  closed curve $\alpha$ being sufficiently horizontal along \GL, such that
  if $\alpha$ is horizontal then it remains horizontal and its horizontal
  length grows exponentially along \GL. 

  \begin{definition} \label{Def:Horizontal}
  
    Given a curve $\alpha$, we will say $\alpha$ is
    \emph{$(n,L)$--horizontal} at $t \in I$ if there exists an
    $\ep_B$--short curve $\gamma$ on $X_t=\GL(t)$ and a leaf $\lambda$ in
    $\lambda_\GL$ such that the following statements hold (see
    \figref{Horizontal}): 

    \begin{itemize}
       
      \item[(H1)] In the universal cover $\tx_t \cong \HH$, there exists
      a collection of lifts $\{\tgamma_1, \ldots \tgamma_n\}$ of $\gamma$
      and a lift $\tlambda$ of $\lambda$ intersecting each $\tgamma_i$ at a
      point $p_i$ (the $p_i$'s are indexed by the order of their
      appearances along $\tlambda$) such that $\dH(p_i, p_{i+1}) \ge L$ for
      all $i=1, \ldots, n-1$.

      \item[(H2)] There exists a lift $\talpha$ of $\alpha$ such that
      $\talpha$ intersects $\tgamma_i$ at a point $q_i$ with $\dH(p_i,
      q_i) \le \ep_B$ for each $i$.

    \end{itemize}
    
    We will call $\gamma$ an \emph{anchor curve} for $\alpha$ and $\talpha$
    an $(n,L)$--\emph{horizontal lift} of $\alpha$.

  \end{definition}
  
  \begin{figure}[htp!] 
  \setlength{\unitlength}{0.01\linewidth}
  \begin{picture}(100, 36)
  \put(27.5,-2){
  \includegraphics{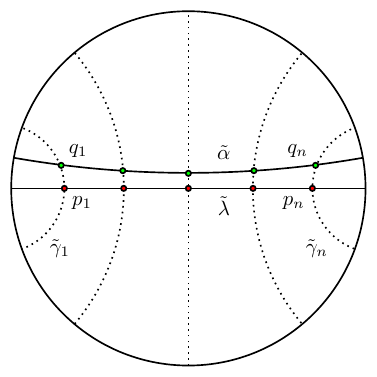}
  }
  \end{picture}
  \caption{The curve $\alpha$ is $(n,L)$--horizontal.}
  \label{Fig:Horizontal}
  \end{figure} 
  
  Set $\GL(t)=X_t$. The main result of this section is the following. 

  \begin{theorem} \label{Thm:Horizontal}
     
    There are constants $n_0$, $L_0$, and $s_0$ such that the following
    holds. Suppose a curve $\alpha$ is $(n_s,L_s)$--horizontal at $s \in I$
    with $n_s\geq n_0$ and $L_s \geq L_0$. Then
    \begin{enumerate} 
    
      \item[(I)] For any $t \ge s+s_0$, $\alpha$ is $(n_t,L_t)$--horizontal
      at $t$, with \[ n_t \gmul n_s, \quad\text{and}\quad  L_t \geq  L_s.
      \] 
      
      \item[(II)] Furthermore, for any $A$, if $\dS \big( X_s,X_t \big) \ge
      A$ then 
      \[ 
        \log \frac{n_t}{n_s} \succ A \quad\text{and}\quad L_t n_t \gmul
        e^{t-s} L_s n_s. 
      \]
    \end{enumerate}

  \end{theorem}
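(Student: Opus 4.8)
The plan is to translate ``$(n,L)$--horizontal'' into the statement that $\alpha$ fellow-travels the leaf $\lambda$ over a long geodesic segment, transport that along $\GL$ using the fact that $\lambda\subset\lambda_\GL$ is stretched by the \emph{exact} factor $e^{t-s}$, and then re-extract a horizontal configuration at time $t$ with the hyperbolic-geometry tools of \secref{Hyperbolic}; part (II) will drop out of the quantitative bookkeeping in that last step together with the Lipschitz shadow map. To begin, unpack the hypothesis at time $s$: by (H2) we have $\dH(q_i,\tlambda)\le\ep_B$ for each $i$, and since the distance from a point of the geodesic $\talpha$ to the geodesic $\tlambda$ is convex along arc length, the whole sub-arc $\tomega^\alpha$ of $\talpha$ from $q_1$ to $q_{n_s}$ stays within $\ep_B$ of $\tlambda$; by (H1) it has length $\gadd(n_s-1)L_s$, and it runs beside the sub-arc $\tomega$ of $\tlambda$ from $p_1$ to $p_{n_s}$, which descends to a geodesic segment $\omega$ of $\lambda$ on $X_s$ meeting the $\ep_B$--short curve $\gamma$ at least $n_s$ times with consecutive intersections $\ge L_s$ apart. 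So, up to universal constants, being $(n_s,L_s)$--horizontal means exactly that $\alpha$ fellow-travels $\lambda$ within $\ep_B$ along such a segment $\omega$.

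Next I would push this forward. Since $\lambda$ is maximally stretched, Thurston's analysis of optimal maps yields an $e^{t-s}$--Lipschitz map $f\from X_s\to X_t$ multiplying arc length along $\lambda_\GL$ by exactly $e^{t-s}$, so $f(\omega)$ is a geodesic segment $\omega_t$ of $\lambda$ on $X_t$ with $\ell_{X_t}(\omega_t)= e^{t-s}\ell_{X_s}(\omega)\ge e^{t-s}(n_s-1)L_s$. The image $f(\tomega^\alpha)$ lies within $e^{t-s}\ep_B$ of $\tomega_t\subset\tlambda_t$, and (using equivariance of $f$ on $\partial\HH$ and thinness of triangles in $\HH$) the geodesic $\talpha_t$ representing $\alpha$ on $X_t$ stays within $e^{t-s}\ep_B+O(1)$ of $\tlambda_t$ between two points whose distance along $\tlambda_t$ is $\gmul e^{t-s}(n_s-1)L_s$. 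Two geodesics of $\HH$ that remain within distance $D$ over a segment of length $\ell$ cross at an angle $\theta$ with $\sinh(\ell/2)\sin\theta\lmul\sinh D$; applied with $D=e^{t-s}\ep_B+O(1)$ this forces $\theta$ minuscule, so in fact $\talpha_t$ stays within $\ep_B$ of $\tlambda_t$ over a sub-segment $\omega_t'\subset\omega_t$ of length still $\gmul e^{t-s}(n_s-1)L_s$, provided $L_0$ is large (the upgrade costs only an additive $O(e^{t-s}\ep_B)$). This is the mechanism forcing exponential growth of the horizontal length.

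Finally, at time $t$ I would re-extract a horizontal configuration along $\omega_t'$. Walking along $\omega_t'$ relative to a Bers pants decomposition of $X_t$, \lemref{Arc-In-Pants} cuts each passage through a pair of pants into collar pieces plus a bounded middle, and \lemref{Arc-In-Pants2} bounds how much length the leaf can spend crossing a short curve with bunched-up intersections; together these force $\omega_t'$ to meet some $\ep_B$--short curve $\gamma'$ of $X_t$ at points that, after deleting the two ends, subsample to $p^t_1,\dots,p^t_{n_t}$ with consecutive gaps in a universal window $[L_s,CL_s]$, so that $n_tL_s\emul\ell_{X_t}(\omega_t')\gmul e^{t-s}(n_s-1)L_s$. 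Since $\talpha_t$ is within $\ep_B$ of $\tlambda_t$ there, it meets the matching lifts $\tgamma^t_i$ at points $q^t_i$ within $\ep_B$ of $p^t_i$; setting $L_t:=L_s$ gives (H1),(H2) at time $t$ with $L_t=L_s\ge L_s$ and $n_t\gmul e^{t-s}n_s\gmul n_s$, which is (I) (choose $n_0,L_0$ large for these estimates and $s_0$ large so $e^{t-s}n_0L_0$ swallows the additive errors). For (II): $L_tn_t=L_sn_t\gmul\ell_{X_t}(\omega_t')\gmul e^{t-s}(n_s-1)L_s\gmul e^{t-s}n_sL_s$, and the Lipschitz shadow map gives $A\le\dS(X_s,X_t)\prec\dL(X_s,X_t)=t-s$; with $n_t/n_s\gmul e^{t-s}$ this yields $\log(n_t/n_s)\gadd t-s$, i.e.\ $\log(n_t/n_s)\succ A$.

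I expect the last step to be the main obstacle: extracting from a long leaf segment a single short anchor curve $\gamma'$ of $X_t$ together with \emph{uniformly spaced} transverse crossings is delicate, because a priori the stretched leaf might spend most of its length twisting around short curves of $X_t$, where consecutive crossings are bunched at spacing comparable to the short curve's length and hence violate the ``$\ge L_t$ apart'' clause of (H1). Separating genuine transverse crossings from twisting contributions — exactly the role of \lemref{Arc-In-Pants} and \lemref{Arc-In-Pants2}, with \propref{Sandwich} for the endpoint bookkeeping — is the technical heart of the argument. A secondary subtlety is transporting the fellow-traveling property forward through the merely Lipschitz map $f$, which must use that $f$ is a near-isometry on and near $\lambda_\GL$.
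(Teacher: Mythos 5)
Your outline follows the paper's architecture (unpack fellow-traveling at time $s$, push forward by an optimal map, control the geodesic representative at time $t$, re-extract a horizontal configuration), but two steps that you compress into a sentence each are exactly where the content lies, and as stated neither goes through. First, the upgrade from ``$f(\tomega^\alpha)$ lies within $e^{t-s}\ep_B$ of $\tlambda$'' to ``the \emph{geodesic} $\talpha_t$ lies within $e^{t-s}\ep_B+O(1)$ of $\tlambda$'' is not justified by ``equivariance of $f$ on $\partial\HH$ and thinness of triangles'': $\tf(\talpha)$ is only an $e^{t-s}$--quasi-geodesic with non-uniform constants, so the geodesic with the same endpoints at infinity need not fellow-travel it with an $O(1)$ (or even $O(e^{t-s})$) error, and a path with prescribed ideal endpoints can stray arbitrarily far from the geodesic joining them. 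The paper's fix is to apply \propref{Sandwich} \emph{at time $s$}: the ideal endpoints of $\talpha$ are trapped between those of $\tlambda$ and of $\phi^k(\tlambda)$, $\psi^k(\tlambda)$ (where $\phi,\psi$ are the deck transformations along the extreme lifts $\tgamma_l,\tgamma_r$); since $\tf$ fixes $\tlambda$ and preserves the circular order at infinity, the endpoints of $\tf(\talpha)$ stay trapped between $\tlambda_\pm$ and $\tf(\phi^k\tlambda)_\pm$, and only then does the divergence estimate $e^{-L_0e^{t-s}}\cdot e^{t-s}\ep_B\le\ep_B$ pin $\talpha_t$ to $\tlambda$ along the middle. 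You cite \propref{Sandwich} only for ``endpoint bookkeeping'' in the re-extraction step, which is not where it is needed.

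Second, your re-extraction claim --- that $\omega_t'$ meets a single $\ep_B$--short curve of $X_t$ with consecutive gaps in a window $[L_s,CL_s]$, hence $n_t\emul\ell_{X_t}(\omega_t')/L_s\gmul e^{t-s}n_s$ --- is false in general, and part (II) cannot be derived from it. A leaf segment of length $\emul L_se^{t-s}$ can avoid every short curve of $X_t$ entirely (by twisting inside a collar or diving deep into one: \lemref{Arc-In-Pants} allows length up to $\I(\tau_\alpha,\omega)\ell(\alpha)+\log(1/\ell(\alpha))$ there), and when it does cross a short curve the crossings may be bunched at spacing $\emul\ell_{X_t}(\alpha)\ll L_0$, which do not count toward (H1). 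What \clmref{Intersection} actually yields is one admissible crossing per block of three consecutive $\tf(p_i)$'s, i.e.\ only $n_t\gmul n_s$ with gaps $\gmul L_se^{t-s}$ --- that is part (I). The improvement $\log(n_t/n_s)\succ A$ in part (II) does not come from $A\prec t-s$ together with a (false) unconditional bound $n_t\gmul e^{t-s}n_s$; it requires the hypothesis $\dS(X_s,X_t)\ge A$ in an essential way, via the intersection number $m\gmul 2^{A/C}$ of the old anchor curve $\gamma$ with every curve of a short pants decomposition of $X_t$, the subdivision of $[\tf(p_l),\tf(p_r)]$ into $mn_s$ pieces each shown to cross a pants curve (using the $m$--fold crossing of $f(\gamma)$ in the \lemref{Arc-In-Pants2}--type estimate), and a pigeonhole over the pants curves. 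Without that mechanism the second half of your argument proves neither the $n_t$ bound in (II) nor, consequently, the stated lower bound on $\log(n_t/n_s)$.
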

  
  \begin{definition}[Sufficiently Horizontal] \label{Def:SuffHorizontal}
  
    Let $(n_0,L_0)$ be the constants given by \thmref{Horizontal}. A curve
    $\alpha$ will be said to be \emph{sufficiently horizontal} at $t \in I$
    if it is $(n,L)$--horizontal for some $n \ge n_0$ and $L \ge L_0$.

  \end{definition}

  \begin{example} \label{Exa:Shear}

  \defref{Horizontal} is a bit technical and warrants some justification.
  To maintain sufficiently horizontal along a Thurston geodesic \GL, we
  require the curve $\alpha$ to fellow-travel $\lambda$ both
  \emph{geometrically} and \emph{topologically} for a long time. The
  following example illustrates why these requirements are necessary.
  Namely, we will show that the weaker version of geometric
  fellow-traveling does not always persist along a Thurston geodesic. 
  
  Referring to the example in \subsecref{Example}, for any \ep, there exists
  a Thurston geodesic $\GL(t)=X_t$ and a curve $\gamma$ such that a leaf
  $\lambda$ of $\lambda_\GL$ intersects $\gamma$ and, for $t>0$,
  $\ell_{X_t}(\gamma) \emul \ep \, e^t + 2e^{-e^t}$. Consider the following
  two points along $\GL(t)$: 
  \[X=\GL \big( \log \log(1/\ep) \big) \quad \text{and} \quad Y=\GL(\log
  1/\ep).\]
  On $Y$, let $\alpha$ be the shortest curve that intersects $\gamma$ with
  $\twist_\gamma(\alpha,Y) = 0$. It was shown in \subsecref{Example} that
  \[
    \twist_\gamma(\lambda,Y) \eadd \frac{1}{\ep}.
  \] 
  This implies $\twist_\gamma(\alpha,\lambda) \eadd 1/\ep$, so $\alpha$
  intersects $\lambda$ at an angle close to $\pi/2$ in $Y$. Furthermore,
  since $\ell_Y(\gamma) \emul 1$, we have $\ell_Y(\alpha) = O(1)$.  That
  is, every large enough segment of the any lift of $\alpha$ to the
  universal cover $\ty$ intersects a lift of $\lambda$ at near right angle.
  Therefore, in $\ty$, no lift of $\alpha$ will fellow travel any lift of
  $\lambda$. 
  
  On the other hand, it was also shown in \subsecref{Example}  that  
  \[
   \ell_X(\gamma) \emul \ep \log (1/\ep).
   \] 
  For a given $L$, we can choose \ep small enough such that the collar
  neighborhood of $\gamma$ in $X$ has width at least $L$. Since $\alpha$
  and $\lambda$ both pass through this collar, in the universal cover
  $\tx$, there exists a lift $\talpha$ and a lift $\tlambda$ that are
  $O(1)$--close for $L$--length. In other words, $\alpha$ and $\lambda$
  fellow-travel in $\tx$ but they do not in $\ty$. 

  \end{example}

  \begin{remark}
  
    For a given $t \in I$, it is possible that there are no sufficiently
    horizontal curves at $t$. For instance, when the stump of $\lambda_\GL$
    is a curve that does not intersect any $\ep_B$--short curve. But this
    is the only problem, since if the stump of $\lambda_\GL$ intersects an
    $\ep_B$--short curve $\gamma$, then any sequence of curves converging
    to the stump will eventually be sufficiently horizontal, with anchor
    curve $\gamma$. In \secref{Shadow}, we will show that one can always
    find a sufficiently horizontal curve after moving a bounded distance in
    \cc (\propref{HorExist}).
  
  \end{remark}

  The next proposition will show that the condition (H2) of \defref{Horizontal}
  can be obtained by just assuming $\talpha$ stays $\ep_B$--close to the
  segment $[p_1,p_n]$ in $\tlambda$. A priori, even if $\talpha$ is
  $\ep_B$--close to $[p_1,p_n]$, the distance between $p_i$ and $q_i$ may
  still be large if $\tgamma_i$ is nearly parallel to $\tlambda$ or
  $\talpha$. 
    
  \begin{proposition} \label{Prop:Suf-Horizontal} 

    There are constants $n_0$ and $L_0$ such that, for any hyperbolic
    surface $X$ and constants $n \ge n_0$ and $L \ge L_0$, the following
    statement holds. Suppose $\gamma$ is an $\ep_B$--short curve in $X$,
    $\lambda$ is a complete simple geodesic in $X$, and $n$ lifts
    $\{\tgamma_i\}$ of $\tgamma$ and a lift $\tlambda$ are chosen to
    satisfy (H1). If $\alpha$ is a curve in $X$ that has a lift $\talpha$
    which stays, up to a bounded multiplicative error,  $\ep_B$--close to
    the segment $[p_1,p_n]$ in $\tgamma$, then there exist indices $l$ and
    $r$ with $r-l \gadd n$ such that $\talpha$ intersects $\tgamma_i$ at a
    point $q_i$ and $\dH(p_i, q_i) \le \ep_B$, for all $i=l, \ldots, r$.
    
  \end{proposition}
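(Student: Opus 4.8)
The plan is to work entirely in the universal cover $\tx\cong\HH^2$ and to produce the crossing points $q_i$ by hand. Write $w_i=\sinh^{-1}\big(1/\sinh(\ell_X(\gamma)/2)\big)$ for the half-width of the standard collar of $\gamma$; since $\gamma$ is $\ep_B$--short, $w_i\ge w_{\min}$ for a universal $w_{\min}>0$. Let $\tilde U_i=N_{w_i}(\tgamma_i)$ be the corresponding collar lifts. These are convex, and pairwise disjoint because the collar of $\gamma$ is embedded; in particular $\dH(\tgamma_i,\tgamma_j)\ge w_i+w_j$ for $i\ne j$, so $p_j\notin\tilde U_i$ whenever $j\ne i$. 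By convexity, $\tlambda\cap\tilde U_i$ is a single subarc $J_i$ of $\tlambda$ symmetric about $p_i$, of length $\ell_i=2\arcsinh\big(\sinh(w_i)/\sin\theta_i\big)$, where $\theta_i\in(0,\pi/2]$ is the crossing angle of $\tlambda$ with $\tgamma_i$. Since $J_i$ cannot reach $p_{i-1}$ or $p_{i+1}$, we get $\ell_i/2<\dH(p_{i-1},p_i)$ and $\ell_i/2<\dH(p_i,p_{i+1})$; hence for $2\le i\le n-1$ the clearance $R_i:=\min\big(\dH(p_1,p_i),\dH(p_i,p_n)\big)$ satisfies $R_i>\ell_i/2$, and in particular $J_i\subset[p_1,p_n]$.

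Next I would check that $\talpha$ crosses every $\tgamma_i$ with $2\le i\le n-1$. Over $J_i\subset[p_1,p_n]$ the geodesic $\talpha$ stays within $C_0\ep_B$ of $\tlambda$, where $C_0$ is the multiplicative error constant; we may assume $\ep_B$ small enough that $C_0\ep_B<w_{\min}$ (shrinking $\ep_B$ only strengthens the statement). The two endpoints of $J_i$ lie on the two distinct boundary components of $\tilde U_i$, hence on opposite sides of $\tgamma_i$, and $\talpha$ lies within $w_i$ of each of them, so $\talpha$ is on the same side of $\tgamma_i$ there as the respective endpoint and therefore crosses $\tgamma_i$ at some $q_i$. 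Quantitatively one can argue at infinity: since $\talpha$ shadows $\tlambda$ to depth at least $R_i$ on either side of $p_i$, the endpoints of $\talpha$ lie within visual angle $\prec\ep_B\,e^{-R_i}$ of those of $\tlambda$ as seen from $p_i$, whereas the endpoints of $\tgamma_i$ lie at visual angle $\theta_i\ge 2\sinh(w_i)\,e^{-\ell_i/2}\ge 2\sinh(w_{\min})\,e^{-R_i}$, which dominates once $\ep_B$ is small, so the endpoints of $\tgamma_i$ separate the two endpoints of $\talpha$.

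It then remains to locate $q_i$. If the crossing angle satisfies $\theta_i\ge\theta^*$ for a suitable universal threshold $\theta^*>0$, then $q_i\in\tgamma_i$ lies within $C_0\ep_B$ of $\tlambda$ and hence within $\arcsinh\big(\sinh(C_0\ep_B)/\sin\theta^*\big)\prec\ep_B$ of $p_i$, which is what we want. The remaining, and main, difficulty is the \emph{nearly parallel} case $\theta_i<\theta^*$: here $q_i$ could a priori be as far as $\approx\ell_i/2$ from $p_i$, since $\talpha$, shadowing $\tlambda$, runs essentially parallel to $\tgamma_i$ through the long thin slab $\tilde U_i$. The plan is to show that such indices are confined to a universally bounded neighborhood of $1$ and of $n$. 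If $i$ were a ``middle'' index with $\theta_i<\theta^*$, then $\talpha$ would be trapped inside $\tilde U_i$ along an arc of length $\succ\log(1/\theta_i)$ sitting well inside $[p_1,p_n]$. Feeding the geodesic $\tgamma_i$, its collar $\tilde U_i$, and the translate $\phi_i(\tlambda)$ of $\tlambda$ by the primitive hyperbolic element $\phi_i$ with axis $\tgamma_i$ (translation length $\ell_X(\gamma)\le\ep_B$) into the divergence estimates of \lemref{Quadrant} and \lemref{Diverge}, together with \propref{Sandwich}, one finds that $\talpha$ is forced either to leave the slab on the wrong side or to diverge from $\tlambda$ before reaching either end of $[p_1,p_n]$, contradicting that $\talpha$ shadows $\tlambda$ across all of $[p_1,p_n]$. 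Hence outside a bounded neighborhood of the two ends every $\theta_i\ge\theta^*$, and $q_i$ lies within $\prec\ep_B$ of $p_i$ by the previous case.

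Finally, take $l$ and $r$ to be the least and greatest of these good indices; by the above $l=O(1)$ and $n-r=O(1)$, so $r-l\gadd n$. The subcollection $\{\tgamma_l,\dots,\tgamma_r\}$ together with $\tlambda$ still satisfies (H1), since passing to a subcollection only increases the gaps $\dH(p_i,p_{i+1})$, and by construction $\talpha$ meets each $\tgamma_i$ with $l\le i\le r$ at a point $q_i$ with $\dH(p_i,q_i)\le\ep_B$. The one genuinely delicate point is the nearly-parallel case of the third paragraph --- ruling out a middle index whose $\tgamma_i$ is almost tangent to $\tlambda$ --- which is precisely the situation the hyperbolic-geometry lemmas of \secref{Hyperbolic} were set up to handle.
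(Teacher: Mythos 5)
Your first two paragraphs track the paper's own setup quite closely (disjoint collar lifts, separation forcing $\talpha$ to cross each middle $\tgamma_i$, and the easy large-angle case), but the step you yourself flag as the crux --- the nearly parallel case --- is where the proof breaks. You propose to show that an index with $\theta_i<\theta^*$ can only occur within a bounded distance of $1$ or $n$, and the justification is only ``feeding \dots into the lemmas \dots one finds that.'' The claim is in fact false, not merely unproven: the gaps $\dH(p_i,p_{i+1})$ are bounded \emph{below} by $L$ but not above, so the only constraint on $\theta_i$ at a middle index is $\log(1/\sin\theta_i)\lmul\min\big(\dH(p_{i-1},p_i),\dH(p_i,p_{i+1})\big)$, which permits arbitrarily small angles anywhere along $[p_1,p_n]$ (for instance, when $\twist_\gamma(\lambda,X)$ is large, \emph{every} crossing of $\lambda$ with $\gamma$ is nearly tangential). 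In that configuration no contradiction arises: $\talpha$ simply enters and exits the slab alongside $\tlambda$ on the same sides; the only question is where along $\tgamma_i$ it crosses, which is exactly what remains to be bounded. Since your final paragraph needs every index in $[l,r]$ to be good, a single bad middle index also destroys the consecutive range you extract.

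The paper's proof handles all angles uniformly by the translation argument you allude to but never run. Assume $\dH(p_i,q_i)>\ep_B$. The deck transformation $\phi$ with axis $\tgamma_i$ has translation length $\ell_X(\gamma)\le\ep_B$, so (replacing $\phi$ by $\phi^{-1}$ if needed) $\phi(p_i)$ lies strictly between $p_i$ and $q_i$. Since $\tlambda$ and $\phi(\tlambda)$ are disjoint, on one boundary component $\partial U$ of the collar lift the point $\phi(p')$, with $p'=\tlambda\cap\partial U$, separates $p'$ from $q'=\talpha\cap\partial U$; convexity of distance along the equidistant curve gives $\dH(p',q')\ge\dH\big(p',\phi(p')\big)=\ell_X\big(\partial U(\gamma)\big)\emul 1$, and \lemref{Quadrant} then places $q'$ at definite distance from $\tlambda$, contradicting the exponentially tight shadowing at $p'$ once $L_0$ is large. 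Note this yields $\dH(p_i,q_i)\le\ep_B$ even when $\theta_i$ is tiny, whereas the direct estimate implicit in your setup ($\dH(p_i,q_i)\approx d_i/\sin\theta_i$ with $d_i\lmul\ep_B e^{-R_i}$ and $1/\sin\theta_i\lmul e^{R_i}$) would only give $\lmul\ep_B$. Finally, you cannot ``assume $\ep_B$ small enough'': it is the Bers constant, fixed by the topology of $\s$, and it appears in both the hypothesis and the conclusion of the statement.
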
 
    
  \begin{proof}

    Recall the standard collar $U(\gamma)$ is a regular neighborhood of
    $\gamma$ in $X$ that is an embedded annulus with boundary length $\emul
    1$. Let $\delta$ be the distance between $\gamma$ and the boundary of
    $U(\gamma)$.  We have (see \secref{Hyperbolic})
    \[ 
      \delta \eadd \log ( 1/ \ell_X(\gamma)). 
    \] 
     Let $L_0$ satisfy inequality 
    \begin{equation*}\label{Eqn:L0}
      \epsilon_Be^{-L_0} <\delta.
    \end{equation*}
    
    The distance between $\talpha$ and $\tlambda$ is a convex function that
    essentially either increases or decreases exponentially fast. Hence, we
    can choose a segment $\balpha$ of $\alpha$ and index $i_0$ such that
    $\balpha$ is within $\epsilon_Be^{-L}$--Hausdorff distance of
    $\blambda=[p_{i_0},p_{n-i_0}]$.  The index $i_0$ can be chosen
    independent of $n$ or $L$ because the distance between $p_i$ and
    $p_{i+1}$ is at least $L$. Let $n_0 \ge 2 i_0 +2$.
    
    Let $U_i$ be the $\delta$--neighborhood of $\tgamma_i$. By the choice
    of $\delta$, $U_i$ and $U_j$ are disjoint  for $i\neq j$.  Since $\ep_B
    e^{-L} <\delta$, endpoints of $\balpha$ are contained in $U_{i_0}$ and
    $U_{n-i_0}$. Also, for $i_0 < i < n-i_0$, $U_i$ separates $U_{i_0}$ and
    $U_{n-i_0}$ in $\HH^2$. Hence $\balpha$ intersects every $\tgamma_i$.
    Consider such an $i$ and, for simplicity, set $\tgamma = \tgamma_i$,
    $U=U_i$, $p=p_i$ and $q=q_i$.  To prove the Proposition, we need to
    show that  $\dH(p,q)\leq \ep_B$. We refer to \figref{Fellow} in the
    following.
  
    \begin{figure}[htp!]
    \setlength{\unitlength}{0.01\linewidth}
    \begin{picture}(100,38)
    \put(25,-2){ 
    \includegraphics{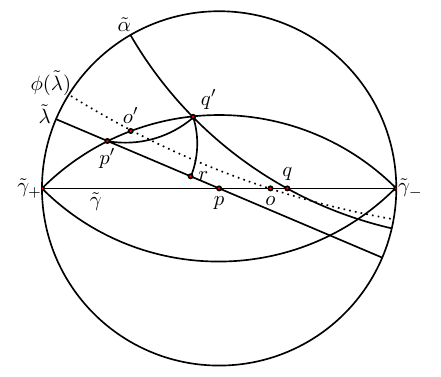} 
    }
    \end{picture}  
    \caption{If $d_\HH(p,q) \geq \ep_B$ then $q$ is far from $\tlambda$ which is
    a contradiction. }
    \label{Fig:Fellow}
    \end{figure} 

    Assume, for contradiction, that  $\dH(p,q) > \ep_B$. Let $\phi$ be a
    hyperbolic isometry with axis $\tgamma$ and translation length
    $\ell_X(\gamma)$.  Then, since $\gamma$ is $\epsilon_B$--short, up to
    replacing $\phi$ with $\phi^{-1}$, the point $o=\phi(p)$ is strictly
    between $p$ and $q$.

    Since $\tlambda$ and $\phi(\tlambda)$ are disjoint, for some boundary
    component of $U$, that we will denote by $\partial U$, the following
    holds. For  $p' = \tlambda \cap \partial U$ and $q' = \talpha \cap \partial U$,
    the point $o'=\phi(p')$ of intersection of $\partial U$ and $\phi(\tlambda)$ is 
    between $p'$ and $q'$.  The curve $\partial U$ is equidistant to $\tgamma$
    and the distance function $\dH$ is convex along this curve, therefore
    \begin{equation} \label{Eqn:p'q'}
      \dH(q',p')\geq \dH(o',p') = \ell_X(\partial U) \emul 1.
    \end{equation}
    Let $r$ be the closest point on $\tlambda$ to $q'$. The point $r$ is
    contained in one of the quadrants at $p'$, hence $\dH(q',r)\gmul 1$ by
    \lemref{Quadrant}. But for sufficiently large $L_0$, this will
    contradict that $q'$ is $\epsilon_Be^{-L_0}$ close to $\tlambda$. Hence
    $\dH(p,q)\leq \epsilon_B$ and we are done. \qedhere

    \end{proof}
     
  \begin{proof}[Proof of \thmref{Horizontal}]

    Let $n_s \ge n_0$ and $L_s \ge L_0$, $n_0$, $L_0$ to be determined
    later. Let $\alpha$ be an $(n_s,L_s)$--horizontal curve at $X_s$. As in
    the definition, we have an anchor curve $\gamma$, a lift $\talpha$ of
    $\alpha$, a lift $\tlambda$ of a leaf of $\lambda_{\GL}$, and
    $n_s$--lifts $\{ \tgamma_i \}$ of $\gamma$, such that $\dH(p_i, q_i)
    \le \ep_B$ and $\dH(p_i, p_{i+1}) \ge L_s$, where $p_i$ is the
    intersection of $\tgamma_i$ with $\tlambda$, and $q_i$ the intersection
    of $\tgamma_i$ with $\talpha$. 
   
    Throughout the proof we will add several conditions on $n_0$, $s_0$ and
    $L_0$.  Let $n_0$ and $L_0$ be at least as big as the corresponding
    constants obtained  in \propref{Suf-Horizontal}.

    Let $c$ be the point on $\tlambda$ to which $\talpha$ is closest.
    To be able to apply \propref{Sandwich} to the curve $\gamma_i$, 
    we need that $c$ has a distance of at least $2\ep_B$ from $p_i$. 
    Assuming $L_0 > 4 \ep_B$, we have $c$ is $2\ep_B$--close to at most one $p_i$.
    That is, we can choose indices $l$ and $r$, with $(l,r) = (1, n_s-1)$
    or $(l,r) =(2, n_s)$, such that $c$ has a distance at least $2\ep_B$
    from both $p_l$ and $p_r$. 
  
    \begin{figure}[ht] 
    \setlength{\unitlength}{0.01\linewidth}
    \begin{picture}(100, 39)
    \put(-1,-2){ 
    \includegraphics{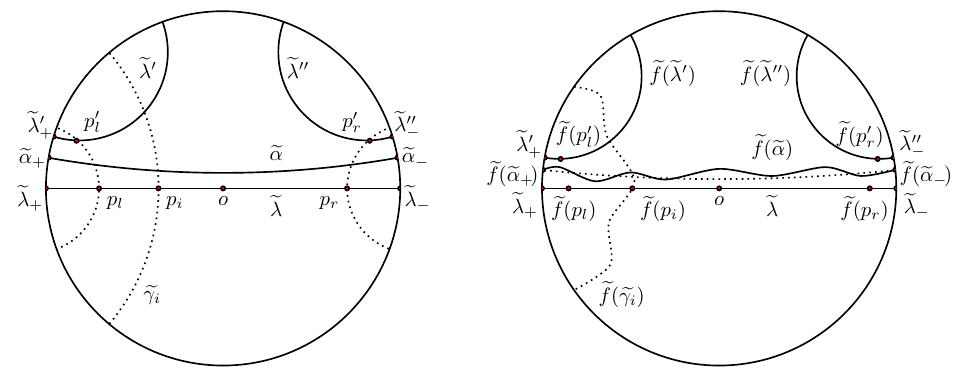} 
    }
    \end{picture}
    \caption{The endpoints of $\tf(\talpha)$ are sandwiched between the end
    points of geodesics $\tf(\tlambda)$,  $\tf(\tlambda')$ and  $\tf(\tlambda'')$.  }
    \label{Fig:Contraction}
    \end{figure} 
  
    See \figref{Contraction} for the following. Applying a M\"obius
    transformation if necessary, we can assume the center of the disk  $o$
    is the midpoint between $p_l$ and $p_r$. Let $\tlambda_+$ and
    $\tlambda_-$ be respectively the endpoints of $\tlambda$ determined by
    the rays $\overrightarrow{op_l}$ and $\overrightarrow{op_r}$. Let
    $\talpha_+$ be the endpoint of $\talpha$ closest to $\tlambda_+$. Let
    $\phi$ be the hyperbolic isometry with axis $\tgamma_l$ and translation
    length $\ell(\phi) = \ell_s(\gamma)$. Let $k$ be the constant of
    \propref{Sandwich} and let 
    \[
    \tlambda' = \phi^k(\tlambda), \qquad \tlambda_+' = \phi^k(\tlambda_+),
    \qquad\text{and}\qquad p_l' = \phi^k(p_l).
    \] 
    We have that $\talpha_+$ is sandwiched between $\tlambda_+$ and 
    $\tlambda_+'$ and $\dH(p_l,p_l') \lmul \ep_B$. Similarly, by considering the 
    hyperbolic isometry $\psi$ with axis $\tgamma_r$, we can sandwich $\talpha_-$
    between $\tlambda_-$ and $\tlambda_-''$ with $\dH(p_r,p_r') \lmul
    \ep_B$, where 
    \[
    \tlambda'' = \psi^k(\tlambda),\qquad  \tlambda_-'' =\psi^k(\tlambda_-), 
    \qquad\text{and}\qquad p_r' = \psi^k(p_r).
    \]
    Let $s_0\geq 0$, $t \ge s+s_0$ and $f \from X_s \to X_t$ be an optimal
    map, i.e.~an $e^{t-s}$--Lipschitz map. Since $\lambda_{\GL}$ 
    is in the stretch locus of $f$, there is a lift $\tf \from \tx_{s} \to \tx_t$ of $f$ 
    such that $\tf(\tlambda)=\tlambda$ and $\tf(\tlambda_{\pm}) =
    \tlambda_{\pm}$.      

    \begin{claim} \label{Clm:alpha}
      
      The geodesic representative $\alpha'$ of $\tf(\talpha)$ stays 
      $O(\ep_B)$--close to $\tlambda$ from $\tf(p_{l+1})$ and $\tf(p_{r-1})$.      

    \end{claim}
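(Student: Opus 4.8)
The plan is to transport the sandwich relations furnished by \propref{Sandwich} across the optimal map and then recover the fellow--traveling of $\alpha'$ with $\tlambda$ from the positions of the ideal endpoints of $\alpha'$ alone. The point is that the optimal map $f\from X_s\to X_t$, although only $e^{t-s}$--Lipschitz and capable of badly distorting the \emph{path} $\tf(\talpha)$, does three useful things: it extends to an orientation--preserving homeomorphism of $\partial\HH^2$; it is equivariant for the $\pi_1(\s)$--actions on $\tx_s$ and $\tx_t$ (both carry the marking of $\s$); and, because $\tlambda$ is a leaf of the stretch locus, $\tf$ restricts on $\tlambda$ to the $e^{t-s}$--homothety with $\tf(\tlambda)=\tlambda$ and $\tf(\tlambda_\pm)=\tlambda_\pm$. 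Writing $\bar p_i=\tf(p_i)\in\tlambda$, it follows that the $\bar p_i$ occur in order along $\tlambda$ and that $\dH(\bar p_i,\bar p_{i+1})=e^{t-s}\dH(p_i,p_{i+1})\ge e^{t-s}L_s\ge L_0$.

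First I would push forward the two sandwich statements. Let $\phi$ and $\psi$ be the elements of $\pi_1(\s)$ carried by $\gamma$ whose axes in $\tx_s$ are $\tgamma_l$ and $\tgamma_r$; the same elements act on $\tx_t$, with axes the $X_t$--geodesic lifts of $\gamma$. Applying \propref{Sandwich} at the indices $l$ and $r$ (which is legitimate because the point $c$ nearest to $\talpha$ on $\tlambda$ was chosen at distance at least $2\ep_B$ from both $p_l$ and $p_r$, and $\ell_{X_s}(\gamma)\le\ep_B$), we get that $\talpha_+$ lies on the arc of $\partial\HH^2$ running from $\tlambda_+$ to $\phi^k(\tlambda_+)$ towards the attracting fixed point of $\phi$, that $\talpha_-$ lies on the arc from $\tlambda_-$ to $\psi^k(\tlambda_-)$ towards the attracting fixed point of $\psi$, and that $|k|\,\ell_{X_s}(\gamma)\ladd\ep_B$. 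Feeding these into the boundary homeomorphism of $\tf$ and using equivariance ($\tf(\phi^k\tlambda_+)=\phi^k\tf(\tlambda_+)=\phi^k\tlambda_+$, now with $\phi$ acting on $\tx_t$, and likewise for $\psi$) shows that $\tf(\talpha_+)$ lies on the arc $A_+$ from $\tlambda_+$ to $\phi^k(\tlambda_+)$ and $\tf(\talpha_-)$ on the arc $A_-$ from $\tlambda_-$ to $\psi^k(\tlambda_-)$ in $\partial\tx_t$, on the sides inherited from the $\tx_s$ picture. Hence $\alpha'$, the geodesic with ideal endpoints $\tf(\talpha_\pm)$, has one ideal endpoint on $A_+$ and one on $A_-$.

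The geometric heart is to bound the visual angle that $A_+$ subtends at each $\bar p_j$ with $l<j<r$. The arc $A_+$ is contained in the arc of $\partial\HH^2$ between $\tlambda_+$ and the attracting fixed point of $\phi$, the latter being an endpoint of the $X_t$--axis of $\phi$; since any geodesic through a point has one endpoint on each side of any other geodesic through that point, this fixed point lies on the $\tlambda_+$--side of the perpendicular to $\tlambda$ erected at the point $q^\ast$ where the $X_t$--axis of $\phi$ meets $\tlambda$. Using that the lifts $\tgamma_i$ are pairwise disjoint and nested about $\tlambda$ — a configuration preserved by $\tf$ at infinity — one checks that $q^\ast$ lies on the $\tlambda_+$--side of $\bar p_{l+1}$ and at distance at least a fixed fraction of $L_0$ from $\bar p_{l+1}$ along $\tlambda$. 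Consequently everything in $A_+$ is seen from $\bar p_j$ within angle $O(e^{-L_0})$ of the direction of $\tlambda_+$, and symmetrically $A_-$ is seen within angle $O(e^{-L_0})$ of the direction of $\tlambda_-$. For $L_0$ large this means the two ideal endpoints of $\alpha'$ are, in the visual metric based at $\bar p_j$, within $O(e^{-L_0})$ of the two ends of $\tlambda$, so the geodesic $\alpha'$ passes within $O(1)$ of $\bar p_j$; this last implication is a one--line estimate in $\HH^2$. Thus $\dH(\bar p_j,\alpha')=O(1)$ for every $l<j<r$.

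Finally, $x\mapsto\dH(x,\tlambda)$ is convex along the geodesic $\alpha'$, and the previous paragraph bounds it by $O(1)$ at the feet of the perpendiculars from $\bar p_{l+1}$ and from $\bar p_{r-1}$ to $\alpha'$; hence it is $O(1)$, and in particular $O(\ep_B)$, on the whole subsegment of $\alpha'$ between those two feet — which is exactly the assertion of the Claim. The step I expect to be the main obstacle is the verification, in the third paragraph, that the $X_t$--axis of $\phi$ crosses $\tlambda$ on the correct side of $\bar p_{l+1}$ and sufficiently far from it: since $\tf$ is only Lipschitz, $\tf(\tgamma_l)$ is not a geodesic, so one cannot simply track $\tf(p_l)$, and one must instead work with the genuine axis (which has the same ideal endpoints), exploit the disjointness and nesting of the $\tgamma_i$'s, and perhaps re-choose which index is fed into \propref{Sandwich}. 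Once this is in place, the remaining estimates are routine hyperbolic trigonometry, and the constants $n_0,L_0$ needed here are absorbed into those already demanded by \propref{Suf-Horizontal}.
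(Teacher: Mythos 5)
Your first two paragraphs and your final convexity step match the paper's proof: apply \propref{Sandwich} at the indices $l$ and $r$ (chosen so that $c$ is $2\ep_B$--far from $p_l$ and $p_r$), transport the sandwiching of $\talpha_\pm$ to $\tx_t$, and then pin $\alpha'$ to $\tlambda$ at $\tf(p_{l+1})$ and $\tf(p_{r-1})$ by exponential convergence of geodesics with nearby ideal endpoints. The divergence is in the middle step, and the obstacle you yourself flag there is a genuine gap, not a routine verification. Your bound on the visual angle of $A_+$ from $\bar p_j$ hinges on locating the point $q^\ast$ where the $X_t$--axis of $\phi$ meets $\tlambda$, and on showing it lies far on the $\tlambda_+$--side of $\bar p_{l+1}$. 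But nothing in the hypotheses controls this: $\gamma$ may have length as large as $e^{t-s}\ep_B$ in $X_t$ (or may become very short), the path $\tf(\tgamma_l)$ is not a quasi-geodesic, and a $\phi_t$--invariant curve of length $e^{t-s}\ep_B$ can wander a distance on the order of $(t-s)+\log\bigl(\ell_{X_s}(\gamma)/\ell_{X_t}(\gamma)\bigr)$ from the axis of $\phi_t$, so $q^\ast$ is not a priori anywhere near $\tf(p_l)$. The nesting of the $\tgamma_i$'s only orders the crossing points of the $X_t$--axes along $\tlambda$; it does not locate them relative to the points $\tf(p_j)$.

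The paper sidesteps the axis of $\phi_t$ entirely, and this is the idea your argument is missing: $\tlambda'=\phi^k(\tlambda)$ is itself a lift of the leaf $\lambda$ of the maximally stretched lamination, so $\tf(\tlambda')=\phi_t^k(\tlambda)$ is an honest \emph{geodesic} in $\tx_t$, and its position is controlled not through the deck transformation but through the Lipschitz bound applied to the marked point: $d_\HH\bigl(\tf(p_l),\tf(p_l')\bigr)\le e^{t-s}\,d_\HH(p_l,p_l')\lmul e^{t-s}\ep_B$, while $d_\HH\bigl(\tf(p_l),\tf(p_{l+1})\bigr)\ge L_0e^{t-s}$. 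The endpoint $\tf(\talpha_+)$ is sandwiched between $\tlambda_+$ and $\tf(\tlambda_+')$, so the standard divergence estimate gives $d_\HH\bigl(\alpha',\tf(p_{l+1})\bigr)\lmul e^{-L_0e^{t-s}}\cdot e^{t-s}\ep_B\le\ep_B$ directly; note that the exponent must carry the factor $e^{t-s}$ to beat the $e^{t-s}$ growth of the sector's width, which a bare $O(e^{-L_0})$ angle bound would not do uniformly in $t-s$. If you replace your third paragraph with this use of $\tf(\tlambda')$ and $\tf(\tlambda'')$, the rest of your argument goes through.
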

   
    \begin{proof}

      Composing with a M\"obius transformation if necessary, we may assume
      $\tf(o)=o$.   Note that $\tf(\tlambda')$ is a geodesic and
      $\tf(\talpha_+)$ is sandwiched between $\tlambda_+$ and
      $\tf(\tlambda_+')$. Similarly, $\tf(\talpha_-)$ is sandwiched between 
      $\tlambda_-$ and $\tf(\tlambda_-')$. Consider the sector $V_+$ 
      between the rays $\overrightarrow{o\, \tlambda_+}$ and 
      $\overrightarrow{o \, \tf(p_l')}$ and the sector $V_-$
      between the rays $\overrightarrow{o\, \tlambda_-}$ and 
      $\overrightarrow{o \, \tf(p_r')}$. The geodesic $\alpha'$ connecting 
      $\tf(\talpha_+)$ and $\tf(\talpha_-)$ stays in a bounded neighborhood of the 
      union $V_+$ and $V_-$. 
            
      Note that 
      \[
       d_\HH (\tf(p_l), \tf(p_{l+1}) \geq L_0 e^{t-s}
       \qquad\text{and}\qquad
       d_\HH (\tf(p_l) , \tf(p_l')) \leq e^{t-s} \ep_B. 
      \]
      Also, the distance between intersecting geodesics increases exponentially 
      fast. Hence
      \begin{align*}
      d_\HH \left(\alpha', \tf(p_{l+1})\right) 
      &\ladd e^{-d_\HH \left(\tf(p_l), \tf(p_{l+1}) \right) } d_\HH \left(\tf(p_l) , \tf(p_l')\right)\\
      & \leq e^{-L_0 \, e^{t-s}} e^{t-s} \ep_B \leq \ep_B. 
      \end{align*}
      The last inequality holds as long as $L_0 \geq 1$. 
      Similarly, $d_\HH \left(\alpha', \tf(p_{r-1})\right) \leq \ep_B$.
      \qedhere
     
    \end{proof}
   
    We next show that the projection to $X_t$ of any long enough piece of the 
    segment of $\tlambda$ between $\tf(p_l)$ and $\tf(p_r)$ intersects a lift of an
    $\epsilon_B$--short curve. 
   
    \begin{claim} \label{Clm:Intersection}
   
      For any $l \le i \le r-3$, let $\tomega$ be the arc connecting 
      $\tf(p_i)$ and $\tf(p_{i+3})$, and let $\omega$ be the projection of $\tomega$ 
      to $X_t$. Then $\omega$ intersects an $\ep_B$--short curve. 
  
    \end{claim}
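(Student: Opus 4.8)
The plan is to argue by contradiction. Suppose $\omega$ is disjoint from every $\ep_B$--short curve of $X_t$. Fix a short (Bers) pants decomposition $\calP$ of $X_t$: each of its curves has length at most $\ep_B$, so $\omega$ is disjoint from $\calP$ and, being connected, lies inside a single pair of pants $P\subset X_t$ whose boundary curves are $\ep_B$--short. The rest of the argument will contradict this confinement.

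Two size estimates make $\omega$ ``too big for $P$''. First, since $\tomega$ is the sub-arc of $\tlambda$ between $\tf(p_i)$ and $\tf(p_{i+3})$ and $\tf$ stretches the maximally stretched lamination $\lambda_{\GL}$ by exactly $e^{t-s}$, we get $\ell_{X_t}(\omega)=\sum_{j=i}^{i+2}e^{t-s}\dH(p_j,p_{j+1})\ge 3e^{t-s}L_s\ge 3L_0 e^{t-s}$, which we may assume is as large as we please by enlarging $L_0$. Second, the crossing pattern of the anchor curve persists: let $\bgamma$ be the simple closed curve on $X_t$ obtained as the $f$--image of the $X_s$--geodesic representative of $\gamma$, so that $\ell_{X_t}(\bgamma)\le e^{t-s}\ell_{X_s}(\gamma)\le e^{t-s}\ep_B$, and its lifts include the $\tf(\tgamma_i)$, which cross $\tlambda$ at the points $\tf(p_i)$; hence $\bgamma$ meets $\omega$ at (the projections of) $\tf(p_i),\dots,\tf(p_{i+3})$, which are at least $e^{t-s}L_s$ apart along $\tlambda$. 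If $\bgamma$ happens to be $\ep_B$--short at $X_t$ we are already done, so we may assume $\ell_{X_t}(\bgamma)>\ep_B$.

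Now I would invoke the two ``arc in pants'' lemmas. Because $\omega\subset P$, \lemref{Arc-In-Pants} expresses $\omega$ as a monotone collar excursion, a piece of length $O(1)$, and a monotone collar excursion, so the collar excursions carry essentially all of its (large) length. Using this structure to locate the crossings of $\omega$ with $\bgamma$ — in particular the crossings at $\tf(p_{i+1})$ and $\tf(p_{i+2})$, which are interior to $\omega$ with complementary sub-arcs of length $\ge e^{t-s}L_s$ each — one extracts a sub-arc $\bomega$ of $\omega$ with endpoints on $\bgamma$ and complementary components $\omega_1,\omega_2$, all three of length at least a universal fraction of $L_0 e^{t-s}$, along which consecutive crossings of $\bgamma$ are at least $D_0$ apart. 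Choosing $L_0$ large compared with $K_0$, $D_0$ and $\ep_B$, so that $K_0(\ell_{X_t}(\bgamma)+1)\le K_0(e^{t-s}\ep_B+1)<L_0 e^{t-s}$ for all $t\ge s$, this configuration contradicts \lemref{Arc-In-Pants2}. Hence $\omega$ is not contained in $P$, i.e.\ $\omega$ meets an $\ep_B$--short curve. (\clmref{alpha}, proved just above, is not needed for this step; it is used separately to transfer condition (H2) of \defref{Horizontal} to time $t$.)

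The step I expect to be the real obstacle is exactly this extraction of a configuration legitimately satisfying the hypotheses of \lemref{Arc-In-Pants2}: a priori the leaf $\lambda$ may cross $\gamma$ at points other than the $\tf(p_j)$'s, possibly clustered more closely than $D_0$, so one must genuinely use the monotonicity of the two collar excursions of $\omega$ provided by \lemref{Arc-In-Pants} — together with the $\ge L_s$ spacing of the $p_j$'s on $\tlambda$ and, if necessary, the fellow--traveling of $\omega$ with the simple closed curve $\alpha^*$ from \clmref{alpha} — to secure both the $D_0$--separation of consecutive crossings and the lower bounds on $\ell(\bomega),\ell(\omega_1),\ell(\omega_2)$. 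One also has to dispatch the degenerate case in which $\bgamma$ (or a leaf of $\lambda_{\GL}$) runs ``parallel'' to a boundary curve of $P$ inside $P$; there one argues directly, since a long \emph{monotone} geodesic excursion of $\omega$ cannot remain at nearly constant distance from the core of a collar over a length comparable to $L_0 e^{t-s}$.
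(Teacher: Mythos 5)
Your overall strategy is the paper's: assume $\omega$ misses every $\ep_B$--short curve, confine it to a pair of pants $P$, play the length of $\omega$ against the pushed-forward anchor curve $f(\gamma)$, and contradict \lemref{Arc-In-Pants2} using the three sub-arcs cut out by $\tf(p_{i+1})$ and $\tf(p_{i+2})$. But two of the places you wave at are exactly where the work is, and your proposed fixes there are not the right ones. First, the case you dismiss with ``if $\bgamma$ is $\ep_B$--short we are already done'' is not done: $\bgamma=f(\gamma)$ is not a geodesic, so even when its geodesic representative $\gamma^*$ is $\ep_B$--short (which happens precisely when $\bgamma$ stays inside $P$ and is hence homotopic to a component of $\partial P$), knowing that $\omega$ crosses the non-geodesic curve $\bgamma$ four times does not tell you that $\omega$ crosses $\gamma^*$. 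The paper closes this by pulling back to $X_s$, where $f^{-1}(\omega)\subset\lambda$ is a geodesic meeting the geodesic $\gamma$ at least $3$ times, and invoking \lemref{Homotopic} --- this is the whole reason the claim is stated for $p_i$ to $p_{i+3}$ rather than $p_i$ to $p_{i+1}$. Moreover, your complementary case assumption $\ell_{X_t}(\bgamma)>\ep_B$ does not verify the hypothesis of \lemref{Arc-In-Pants2} that the curve actually meets $\partial P$; the correct dichotomy is $\bgamma\cap\partial P=\emptyset$ versus $\bgamma\cap\partial P\neq\emptyset$, not short versus long.

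Second, the $D_0$--separation of consecutive crossings, which you flag as the real obstacle and propose to handle via the monotone collar excursions of \lemref{Arc-In-Pants}, has a one-line resolution that bypasses all of that: in $X_s$ the sub-arc of the geodesic $\lambda$ between any two consecutive intersections with the $\ep_B$--short geodesic $\gamma$ has length at least $\delta_B$, and the optimal map scales arc length along $\lambda$ by exactly $e^{t-s}$, so \emph{all} consecutive crossings of $\omega$ with $\bgamma$ --- not just those at the $\tf(p_j)$ --- are at least $\delta_B e^{t-s}\ge\delta_B e^{s_0}\ge D_0$ apart once $s_0\ge\log(D_0/\delta_B)$. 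Your clustering worry therefore never arises, and the ``long monotone excursion cannot stay parallel to a collar core'' argument is neither needed nor applicable to the non-geodesic curve $\bgamma$. Finally, you omit the case where $\omega$ is not simple (when the leaf $\lambda$ is a closed curve and $\tomega$ projects to an arc wrapping around it): \lemref{Arc-In-Pants2} requires $\omega$ simple, and this case must be dispatched separately by noting $\ell_{X_t}(\lambda)\ge\delta_B e^{s_0}>\ep_B$, so the closed leaf cannot sit inside a pair of pants with $\ep_B$--short boundary.
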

   
    \begin{proof} 
     
      Recall the dual constant $\delta_B>0$ to $\ep_B$, which is a lower
      bound for the length of any curve that intersects an $\ep_B$--short
      curve. 
      
      If $\omega$ is not simple, then $\tlambda$ is a lift of a closed
      curve $\lambda$ and $\omega$ wraps around $\lambda$. By definition
      $\lambda$ intersects an $\ep_B$--short curve in $X_s$ which implies
      that $\ell_{X_s}(\lambda) \geq \delta_B$ and $\ell_{X_t}(\lambda)
      \geq \delta_Be^{t-s}$. If  $t-s \ge s_0 >
      \log\frac{\epsilon_B}{\delta_B}$, then  $\ell_{X_t}(\lambda) \geq
      \ep_B$ and $\lambda$ has to intersect an $\ep_B$--short curve in
      $X_t$. Thus, $\omega$ will also intersect an $\ep_B$--short curve. 
        
      Now assume $\omega$ is simple. Let $\gamma'=f(\gamma)$. Note that
      $\gamma'$ is not necessarily a geodesic in the metric $X_t$. If
      $\omega$ misses all the curves of length at most $\ep_B$, then it is
      contained in a pair of pants $P$ in $X_t$ with boundary lengths at
      most $\ep_B$. Since endpoints of $\omega$ lie on $\gamma'$, $\gamma'
      \cap P$ is non-empty. 
      
      First, we assume $\gamma'$ does not intersect $\partial P$.  Then
      $\gamma' \subset P$ and it is homotopic to a boundary component of
      $P$. That is, the geodesic representative $\gamma^*$ of $\gamma'$ in
      $X_s$ is $\ep_B$--short. The arc $f^{-1}(\omega)$ is a geodesic in
      $X_s$ and intersects $\gamma$ (which is a geodesic in $X_s$) at least
      $3$ times, so by \lemref{Homotopic}, $f^{-1}(\omega)$ intersects
      $f^{-1}(\gamma^*)$ at least once. This implies $\omega$ intersects
      $\gamma^*$ which proves the claim.
      
      Now assume $\gamma'$ intersects $\partial P$. For $j=0,1,2$, let
      $\omega_j$ be the sub-arc of $\omega$ coming from projecting
      $\left[\tf(p_{i+j}),\tf(p_{i+j+1})\right]$ to $X_t$. Let $D_0$ and
      $K_0$ be the constants of \lemref{Arc-In-Pants2}. Note that if a
      sub-arc of $\lambda$ intersects $\gamma'$, then the arc length
      between two consecutive intersections is at least $\delta_B
      e^{t-s}\geq \delta_Be^{s_0}$. Assuming $s_0 \geq
      \log\frac{D_0}{\delta_B}$, we have, for each $j$, $\ell_t(\omega_j)$
      is at least $L_0e^{t-s}$, while $\ell_t(\gamma')$ is at most
      $\epsilon_Be^{t-s}$. Let $L_0$ be bigger than $K_0(\epsilon_B+1)$.
      Then, by \lemref{Arc-In-Pants2}, at least one of $w_j$ has
      \[
        \ell_t(\omega_j)\leq  K_0\cdot (\ell_t(\gamma')+1)<L_0e^{t-s} 
      \]
      which is impossible and hence $\omega$ intersects one of the pants
      curves.
      \qedhere

    \end{proof}

    \clmref{Intersection} implies that  for some $\ep_B$--short curve (call it 
    $\gamma_t$) and some $n_t\gmul n_s$, the projection of 
    $\left[\tf(p_{l+1}), \tf(p_{r-1})\right]$ to $X_t$ intersect $\gamma_t$ 
    at least $n_t$ times with the  the arc length between every two
    intersection points is at least $L_t=L_se^{t-s}$. And \clmref{alpha}
    implies that there is aloft of $\alpha$ that remains $O(\ep_B)$--close
    to the segment $\left[\tf(p_{l+1}), \tf(p_{r-1})\right]$. Applying
    \propref{Suf-Horizontal} we conclude that $\alpha$ is $(n_t,L_t)$-horizontal 
    which proves part (I). 
   
    We now prove part (II) of the \thmref{Horizontal}. Suppose
    \begin{equation} \label{Eqn:distCS}
      \dS(X_s,X_t) \ge A. 
    \end{equation}
    We need to show that $n_t$ lifts of an $\ep_B$--short curve intersect the 
    segment $\left[\tf(p_{l+1}),\tf(p_{r-1})\right]$, where $\log\frac{n_t}{n_s}\succ A$ 
    such that any two consecutive intersections are at least  $L_t\gmul
    \frac{L_sn_se^{t-s}}{n_t}$ away. 
    
    Let $\calP$ be an $\ep_B$--short pants decomposition  on $X_t$ and 
    $m=\underset{\beta \in \calP}{\min}\I(\beta,\gamma)$. From 
    \eqnref{Intersection} we have
    \[
      A \lmul \log m. 
    \]
    Note that, for small values of $A$, part (II) follows from Part (I).
    Hence, we assume $A$ is large, which implies in particular that
    $\gamma$ intersects every curve in $\calP$. Even though part (II) seems
    to be more general, this last condition is used in an essential way in
    the proof of Part (II) and the proof does not naturally extend to prove
    part (I). 
    
    We also have 
    \[
    m \lmul \frac{\ell_{X_t}(\gamma)}{\delta_B}  
      \leq e^{t-s} \frac{\ep_B}{\delta_B} \lmul e^{t-s}.
    \]
     
    Cut the segment $\left[\tf(p_l),\tf(p_r)\right]$ into $m \, n_s$ equal
    pieces and let $\tomega$ be one of them.  Denote the projection of
    $\tomega$ to $X_t$ by $\omega$. We would like to show that $\omega$
    intersects a curve in $\calP$. As in the proof of
    \clmref{Intersection}, if $\omega$ is not simple, then it wraps around
    a simple closed $\lambda \in \lambda_G$ and assuming $s_0>
    \log\frac{\epsilon_B}{\delta_B}$, it has to intersect a curve in
    $\calP$. Hence we assume $\omega$ is simple.
    
    Assume for contradiction that $\omega$ is disjoint from $\calP$. Then
    $\omega\subset P$ for some pair of pants $P$ with $\ep_B$-short
    boundaries. It follows from \lemref{Arc-In-Pants} that there is $\beta
    \subset \partial P$ such that $U(\beta)$ contains an endpoint of
    $\omega$ and such that
    \begin{equation} \label{Eqn:OL} 
      \ell_{X_t}(\omega)\ladd
      2(\I(\omega,\tau_\beta)\ell_{X_t}(\beta)+\ell_{X_t}(\tau_\beta)).
    \end{equation}
   
    Then $\gamma$ intersects $\beta$ at least $m$ times. Pick any 
    of the sub-arcs $\sigma$ of  $\gamma$ that connect both boundary components of
    $U(\beta)$. Then, 
    \begin{equation}\label{Eqn:OmegaTauA}
      \I(\omega,\tau_\beta)\ladd \I(\omega,\sigma)+\I(\sigma,\tau_\beta)
      \ladd \frac{1}{m}\I(\omega,\gamma)+\I(\sigma,\tau_\beta).
    \end{equation}
    The last inequality holds because $\omega$ intersects every component of 
    $\gamma^\star \cap U(\beta)$ essentially the same number of times.
    Also, 
    \begin{equation}\label{Eqn:SigmaTauA}
      \I(\sigma,\tau_\beta)\,\ell_t(\beta)+
      \ell_{X_t}(\tau_\beta)\ladd\ell_{X_t}(\sigma)\ladd\frac{1}{m}\ell_{X_t}(\gamma).
    \end{equation}
    and
    \begin{equation} \label{omega-gamma}
     \I(\omega, \gamma) \ladd \frac{\ell_{X_t}(\omega)}{\delta_B \, e^{t-s}}.
    \end{equation}
    From the last four equations and using $\ell_t(\gamma)\leq \epsilon_Be^{t-s}$ and 
    $\ell_{X_t}(\beta) \leq \ep_B$, we have:
    \[
      \ell_{X_t}(\omega)\ladd
     \ell_{X_t}(\omega)\frac{2\epsilon_B}{m\, \delta_B \, e^{t-s}}+ 
     \frac{2\epsilon_B\, e^{t-s}}{m}.
    \]
    But $e^{t-s}\gmul m$. Hence, for some uniform constant $C$    
    \begin{equation}\label{Eqn:OmegalengthA}
      \ell_{X_t}(\omega)\left(1- \frac{2\epsilon_B}{m\delta_Be^{t-s}}
      \right)\leq \frac{Ce^{t-s}}{m}.
    \end{equation}
    The expression in parentheses on the left side is strictly positive since
    we have assumed $s_0>\log\frac{\epsilon_B}{\delta_B}$. 
    Finally, if we choose $L_0$ such that 
    \[ 
      L_0\left(1- \frac{\epsilon_B}{\delta_Be^{s_0}} \right)>C
    \]
    then \eqnref{OmegalengthA} contradicts 
    \[
    \ell_{X_t}(\omega) \geq \frac{L_s \, e^{t-s}}{m}.
    \] 
    Contradiction proves that $\omega$ intersects some curve in $\calP$. 
    
    There are at least $m\, (n_s-4)$ such sub-segments in
    $\left[\tf(p_{l+1}),\tf(p_{r-1})\right]$ and each intersects a lift of
    a curve in $\calP$. If we choose every other segment, we can guarantee
    that the distance along $\lambda$ between these intersection points is
    larger than $L_t = e^{t-s} L_s$. Color these segment according to which
    curve in $\calP$ their projection to $X_t$ intersects and let $\beta$
    be the curve used most often. Then the number $n_t$ of segments
    intersecting a lift of $\beta$ satisfies $n_t \gmul m \, n_s$. Applying
    \propref{Suf-Horizontal} finishes the proof. \qedhere

  \end{proof}
    
\section{Shadow to the curve graph}
  
  \label{Sec:Shadow}

   To show that the shadow of a Thurston geodesic to the curve graph is a
   quasi-geodesic, we construct a retraction from the curve graph to the
   image of the shadow sending a curve $\alpha$ to the shadow of the point
   in the Thurston geodesic where $\alpha$ is \emph{balanced}. 

  \subsection{Balanced time for curves}
  
  Let $n_0$ and $L_0$ be the constants of \thmref{Horizontal}.

  \begin{definition}[Balanced time]

    Let $\GL \colon [a,b] \to \T$ be a Thurston geodesic segment. For any
    curve $\alpha$, let \[ t_\alpha = \inf  \Big\{ t\in [a,b] \ST \text{
    $\alpha$ is $(n_0,L_0)$--horizontal at $\GL(t)$} \Big\} \] Let
    $t_\alpha = b$ if the above set is empty. We refer to $t_\alpha$ as the
    \emph{balanced time} of $\alpha$ along \GL. 

  \end{definition}
  
  Recall the shadow map $\pi \colon \T \to \cc$ from \secref{ShadowMap}.
  The following theorem asserts that the shadow map is a coarse Lipschitz
  map.

  \begin{theorem} \label{Thm:Balanced}
     
    Let $\GL \colon [a,b] \to \T$ be a Thurston geodesic, and let $\pi
    \colon \T \to \cc$ be the shadow map. Suppose
    $\alpha$ and $\beta$ are disjoint curves with $t_\beta \ge t_\alpha$.
    Then $\pi \circ \GL([t_\alpha, t_\beta])$ has uniformly bounded
    diameter in \cc. 

  \end{theorem}
  
  In the following, we develop some notions that will be used to prove
  \thmref{Balanced}. 
  
  Let $X$ be a hyperbolic surface. A \emph{rectangle} $R$ in $X$ is the
  image of a continuous map $\phi \colon [0,a] \times [0,b] \to X$ such
  that $\phi$ is a homeomorphism on the interior of $[0,a] \times [0,b]$
  and the image of each boundary segment of $[0,a] \times [0,b]$ is a
  geodesic arc in $X$.  
  
  \begin{definition} \label{Def:Corridor} 
    
    Let $\gamma$ be a simple closed geodesic on $X$, $\omega$ be a geodesic
    arc and $R$ be a rectangle given by $\phi \colon [0,a] \times [0,b] \to
    X$. We say $R$ is an $(n,L)$--corridor generated by $\gamma$ and
    $\omega$, if \begin{itemize}

      \item Edges $\{0\} \times [0,b] $ and  $\{a\} \times [0,b]$ are
        mapped to sub-arcs of $\omega$.

      \item There are $0 = t_1< \ldots < t_n = b$ such that each $[0,a]
        \times \{t_i\}$ is mapped to a sub-arc of $\gamma$, for all
        $i=1,\ldots,n$.

      \item Arcs $\phi \big( [t_i,t_{i+1}] \times \{0\} \big)$ and $\phi
      \big( [t_i,t_{i+1}] \times \{a\} \big)$ have lengths at least $L$.

    \end{itemize} 
  
  \end{definition}
 

  \begin{figure}[ht]
  \setlength{\unitlength}{0.01\linewidth}
  \begin{picture}(100, 26)
  \put(16.5,-2){ 
  \includegraphics{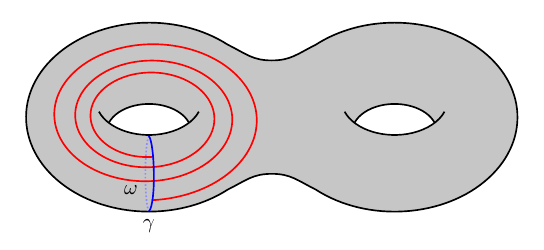}
  }
  \end{picture}
  \caption{A corridor generated by $\gamma$ and $\omega$.}
  \label{Fig:Rectangle}
  \end{figure} 
 
  \begin{lemma} \label{Lem:Corridor}
     
    Let $X$ be a hyperbolic surface and $\gamma$ an $\ep_B$--short curve on
    $X$. For any constants $n$ and $L$, let $\omega$ be a simple geodesic
    arc (possibly closed) with endpoints on $\gamma$ such that \[
      \I(\gamma, \omega) \ge C(n,L) = (6|\chi(X)|+1)\, n \left\lceil
    \frac{L}{\delta_B} \right\rceil +3|\chi(X)|+1. \] Then there exists an
    $(n,L)$--corridor generated by $\gamma$ and $\omega$.

  \end{lemma}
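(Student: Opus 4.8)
The plan is to convert the hypothesis $\I(\gamma,\omega)\ge C(n,L)$ into a pigeonhole statement about how $\omega$ meets a short pants decomposition, to extract from it a large supply of pairwise parallel subarcs of $\omega$, and then to glue these, along arcs of $\gamma$, into an embedded rectangle; the factor $\lceil L/\delta_B\rceil$ in $C(n,L)$ will be exactly the price of upgrading ``$\gamma$ is crossed many times'' to ``consecutive rungs are at distance $L$ along $\omega$''. To begin, extend $\gamma$ to an $\ep_B$--short pants decomposition $\calP$ of $X$ (possible, after enlarging $\ep_B$ by a universal constant, by applying Bers' theorem to the surface obtained by cutting $X$ along $\gamma$, whose boundary has length $\emul 1$) and realize each curve of $\calP$ by its geodesic. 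By \lemref{Collar} the standard collars $U(\beta)$, $\beta\in\calP$, are pairwise disjoint embedded annuli, and removing their interiors leaves a disjoint union of $|\chi(X)|$ pairs of pants, each with all boundary lengths $\emul 1$ and hence diameter $O(1)$. Cut $\omega$ at its intersections with $\calP$ into consecutive subarcs $c_1,\dots,c_m$; each $c_i$ is an essential geodesic arc properly contained in one pair of pants of $X\setminus\calP$, so up to isotopy rel boundary it is one of at most $6$ such arcs. Thus $c_i$ carries a \emph{type} drawn from a fixed set of size at most $6|\chi(X)|$, and since $\gamma\subset\calP$ we have $m-1\ge\I(\omega,\calP)\ge\I(\gamma,\omega)\ge C(n,L)$; moreover at least $\I(\gamma,\omega)$ of the pieces flank a crossing of $\omega$ with $\gamma$, and every such piece has a type incident to the curve $\gamma$.

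Pigeonholing these $\ge\I(\gamma,\omega)\ge C(n,L)$ $\gamma$--incident pieces over the at most $6|\chi(X)|$ types produces a single type $\tau$, lying in a pair of pants $P_\gamma$ with $\gamma\subset\bdy P_\gamma$, that is carried by at least $n\lceil L/\delta_B\rceil+1$ of them. Because $\omega$ is simple, these pieces are pairwise disjoint simple arcs of the same isotopy class in $P_\gamma$; hence they are \emph{parallel}, i.e.\ linearly ordered ``across'' $P_\gamma$, and, passing to a subfamily that is consecutive in this order, consecutive members cobound pairwise disjoint embedded rectangles in $P_\gamma$, each of which has one pair of opposite sides on $\omega$ and the other pair either on two subarcs of $\gamma$ or on a subarc of $\gamma$ and a subarc of another boundary curve of $P_\gamma$.

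Gluing these rectangles along their common $\omega$--sides, and (after the bookkeeping discussed below) incorporating the excursions of $\omega$ outside $P_\gamma$ that join consecutive members, yields an embedded rectangle $R_0$ whose two long sides are subarcs $\rho,\rho'$ of $\omega$ and which $\gamma$ crosses at least $K\gmul n\lceil L/\delta_B\rceil$ times, the crossings appearing in the same cyclic order along $\rho$ and along $\rho'$. It remains to upgrade the rung spacing. Since $\gamma$ is $\ep_B$--short, the dual--constant estimate $\ell_X(\cdot)\ge(\I(\gamma,\cdot)-1)\delta_B$ shows that any subarc of the geodesic $\rho$ of length $<L$ meets $\gamma$ fewer than $\lceil L/\delta_B\rceil$ times. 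Hence, selecting greedily from one end, I can choose $n$ of the $K\ge n\lceil L/\delta_B\rceil$ crossings so that consecutive chosen crossings are at distance at least $L$ along $\rho$, and therefore also along $\rho'$; declaring the corresponding $n$ arcs of $\gamma$ to be the rungs $[0,a]\times\{t_i\}$ at heights $t_1<\dots<t_n$ exhibits $R_0$ as an $(n,L)$--corridor generated by $\gamma$ and $\omega$.

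The main obstacle is the embeddedness bookkeeping in the last two steps. One must ensure that a genuinely long run of the parallel $\tau$--pieces can be taken with no stray subarc of $\omega$ slipping between neighbours, that the excursions of $\omega$ outside $P_\gamma$ joining consecutive $\tau$--pieces can be absorbed without the rectangle folding across itself, and that the assembled object is an embedded rectangle rather than an immersed annulus wrapping around $\gamma$. Ruling these out requires using the simplicity of $\omega$ together with the combinatorics of essential arcs in a pair of pants, and it is this analysis that is responsible for the precise shape of the constant $C(n,L)$.
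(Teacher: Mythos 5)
Your pigeonhole setup (extend $\gamma$ to a short pants decomposition, classify the pieces of $\omega$ into at most $6|\chi(X)|$ arc types, and extract $n\lceil L/\delta_B\rceil+1$ parallel strands of a single type in a pair of pants $P_\gamma$ adjacent to $\gamma$) is a legitimately different opening from the paper's, which instead runs an Euler-characteristic/combinatorial-curvature count on the complementary components of $\gamma\cup\omega$ itself. But there is a genuine gap at the assembly step, and it is not mere bookkeeping: the rectangle you build has the wrong orientation relative to \defref{Corridor}. The region of $P_\gamma$ between two transversally consecutive parallel $\tau$--strands is a quadrilateral whose two $\omega$--sides are those (short) strands and whose other two sides are long arcs of $\partial P_\gamma$ — one on $\gamma$, the other possibly on a different pants curve. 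Gluing these quadrilaterals along their common $\omega$--sides therefore produces the region between the two extremal strands: a rectangle whose \emph{long} sides lie on $\gamma$ and $\partial P_\gamma$ and which is crossed $K$ times by $\omega$. That is a corridor with the roles of $\gamma$ and $\omega$ exchanged. \defref{Corridor} requires the opposite: the two edges $\{0\}\times[0,b]$ and $\{a\}\times[0,b]$ must be sub-arcs of $\omega$ of length at least $(n-1)L$, and the $n$ rungs must be sub-arcs of $\gamma$; this orientation is essential in the application (\propref{Balanced}), where a curve disjoint from $\alpha$ entering the corridor is forced to cross all the $\gamma$--rungs. Your sentence asserting that the glued object has ``two long sides [that] are subarcs $\rho,\rho'$ of $\omega$'' is exactly the missing content: to produce it you need two disjoint long sub-arcs of $\omega$ that fellow-travel each other across $\gtrsim n\lceil L/\delta_B\rceil$ consecutive arcs of $\gamma$, and the transversal accumulation of strands you extracted does not yield this. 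In particular the ``excursions of $\omega$ outside $P_\gamma$ joining consecutive members'' cannot be promoted to rails, since strands that are consecutive in the transversal order of $P_\gamma$ need not be consecutive along $\omega$, and even when they are, concatenating a strand with its excursion gives one connected sub-arc of $\omega$, not two parallel ones. Your closing paragraph defers precisely this point to ``embeddedness bookkeeping,'' but it is the theorem.

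For contrast, the paper gets the longitudinal fellow-traveling directly: it shows via the curvature count that all but $O(|\chi(X)|)$ of the complementary components of $\gamma\cup\omega$ are rectangles (each bounded by two arcs of $\omega$ and two arcs of $\gamma$), and then chains rectangles that share an arc of $\gamma$. Chaining along $\gamma$--edges is what concatenates the $\omega$--edges into two genuine sub-arcs of $\omega$ serving as rails, with the shared $\gamma$--edges as rungs; a second count bounds the number of maximal chains, so some chain has length at least $n\lceil L/\delta_B\rceil$, and your final greedy selection of rungs spaced $\ge L$ apart (which is correct, and is also how the paper concludes) finishes the proof. If you want to salvage your route, you would need to show that among your $K$ parallel strands there is a long run that is consecutive both transversally and along $\omega$, with the intervening excursions themselves pairing up into parallel fellow-traveling arcs — which is essentially the paper's chaining argument in disguise.
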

  
  \begin{proof}
     
    Fix $n$ and $L$, and denote by $C=C(n,L)$. Let $\I(\omega, \gamma) = N
    \geq C$. The closure of each connected component of $X \setminus
    \{\gamma \cup \omega\}$ is a surface with a piecewise geodesic
    boundary. Let $Q$ be a complementary component. We refer to points in
    the boundary of $Q$ where two geodesic pieces meet as an angle. Define
    the total combinatorial curvature of $Q$ to be \[ \kappa(Q) = \chi(Q) -
    \frac{\text{\# of angles in $\partial Q$} }4. \] We can represent $X$
    combinatorially with all angles having value $\pi/2$ to obtain \[
    \sum_Q \kappa(Q) = \chi(X). \] Note that, for every component $Q$ that
    is not a rectangle, $\kappa(Q) < -\frac 12$ and hence, the number of
    components that are not rectangles is bounded by $2|\chi(X)|$. In fact,
    the number of angles that appear in non-rectangle components is at most
    $12 |\chi(X)|$, because the ratio of the number of angles to Euler
    characteristic is maximum in the case of a hexagon. Since the total
    number of angles is $4N-4$ (there are only two angles at the first and
    the last intersection points) the number of rectangles is at least: \[
    \frac{(4N - 4) - 12 |\chi(X) | }4 = N - 3 |\chi(X)| -1 \geq
    (6|\chi(X)|+1) \, n \left\lceil \frac{L}{\delta_B} \right\rceil. \]   
    
    We will say two rectangle components can be joined if they share an arc
    of $\gamma$. A \emph{maximal sequence of joined rectangles} is a
    sequence $\{Y_1, \ldots Y_s\}$ of rectangles in $X \setminus \{\omega
    \cup \gamma\}$ such that $Y_i$ and $Y_{i+1}$ can be joined for $i=1,
    \ldots, (s-1)$ and such that $Y_1$ and $Y_s$ share a boundary with a
    non-rectangle component. The number of edges of rectangles that share
    with a non-rectangle component is at most 2 more than the number of
    angles of non-rectangle components (again coming from the first and
    last intersection points of $\gamma$ and $\omega$).  That is, the
    number of maximal sequences of joined rectangles is at most \[ \frac{2
    \cdot (\text{ \# of rectangles})}{12|\chi(X)|+2}. \] Therefore, there
    must be at least one maximal sequence of joined rectangles $\{ Y_1,
    Y_2, \ldots, Y_M \}$ where $M \ge n \lceil L /\delta_B \rceil$. For
    each $i=1, \ldots, M$, the sides of $Y_i$ coming from arcs of $\omega$
    have endpoints on $\gamma$ and thus are at least $\delta_B$ long.
    Therefore the union $\bigcup_{j=1}^M Y_i$ is an $(n,L)$-corridor for
    $\gamma$ after letting $t_i$ be the point that maps to the intersection
    number $ i \cdot \lceil L /\delta_B \rceil$. \qedhere
     
  \end{proof}
  
  \begin{proposition} \label{Prop:Balanced}

    Let $n_0$ and $L_0$ be the constants from \thmref{Horizontal}. There
    exists a constant $n_1$ such that for any $n \ge n_1$ and $L \ge L_0$,
    if $\alpha$ is $(n,L)$--horizontal at $\GL(t)=X_t$, then any curve
    $\beta$ disjoint from $\alpha$ is either $(n_0,L_0)$--horizontal at
    $X_t$ or $\dS(X_t, \beta) = O(1)$. 
     
  \end{proposition}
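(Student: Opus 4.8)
The plan is to prove the dichotomy by splitting on how many times $\beta$ meets the anchor curve $\gamma$ of the given horizontal data for $\alpha$. Since $\gamma$ is $\ep_B$--short, any systole of $X_t$ is either disjoint from $\gamma$ or meets it at most $\ep_B/\delta_B = O(1)$ times, so $\dS(X_t,\gamma) = O(1)$ by \eqnref{Intersection}; more generally any two $\ep_B$--short curves are $O(1)$--close in \cc. Hence, fixing a large universal constant $C_1$ — to be pinned down at the end of the proof in terms of $n_0$, $L_0$, $\ep_B$, $\delta_B$ and the constants of \secref{Hyperbolic} — if $\I(\beta,\gamma)\le C_1$ then $\dS(\beta,\gamma)=O(1)$ by \eqnref{Intersection}, so $\dS(X_t,\beta)=O(1)$ and we are in the second alternative. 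From now on I would assume $\I(\beta,\gamma)>C_1$ and aim to show that $\beta$ is $(n_0,L_0)$--horizontal at $X_t$, keeping the \emph{same} anchor curve $\gamma$ and the \emph{same} leaf $\lambda$ of $\lambda_\GL$. By \propref{Suf-Horizontal} it is enough to exhibit one lift $\tbeta$ of $\beta$ that stays, up to a bounded multiplicative error, $\ep_B$--close to a sub-segment of $\tlambda$ spanning at least $n_0'$ of the lifts $\tgamma_i$ in $\alpha$'s configuration, where $n_0'$ is a fixed constant slightly bigger than $n_0$.

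To produce such a lift I would work in $\tx_t\cong\HH^2$ with the configuration $\{\tgamma_i\}_{i=1}^n$, $\tlambda$, $\talpha$ of \defref{Horizontal}, taking $n\ge n_1$ large. As $\dH(p_i,q_i)\le\ep_B$ for every $i$ and $s\mapsto\dH\big(\talpha(s),\tlambda\big)$ is convex, $\talpha$ stays $\ep_B$--close to the segment $[p_1,p_n]\subset\tlambda$, so $\talpha$ and $\tlambda$ cobound a long thin region $\Phi$ crossed transversally by each $\tgamma_i$. The heart of the proof is a trapping principle. Since $\beta$ is disjoint from $\alpha$, no lift of $\beta$ can cross $\talpha$; on the other hand $\alpha$ blocks off $\gamma$ so tightly — all $n$ of its crossings of $\gamma$ sit within $O(\ep_B)$ of $\lambda$ — that, because $\I(\beta,\gamma)>C_1$ is very large, some lift $\tbeta$ of $\beta$ must enter $\Phi$ near one of the $\tgamma_{i}$. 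Once inside, $\tbeta$ is pinned between $\talpha$ and $\tlambda$: being geodesic and forbidden from crossing $\talpha$, it must cross the consecutive $\tgamma_i$ in order, each within $O(\ep_B)$ of $p_i$, until it leaves $\Phi$, and it can leave only by crossing $\tlambda$, after which \lemref{Quadrant} and \lemref{Diverge} force it to diverge definitively from $\tlambda$ and preclude its return to the points $p_i$. Finally a counting argument shows some such trapped run of $\tbeta$ must cross at least $n_0'$ of the $\tgamma_i$: if every run were shorter, $\beta$ would have to cross $\lambda$ — equivalently, thread through the thin region between $\lambda$ and $\alpha$ — far more often than is possible, given the spacing $\dH(p_i,p_{i+1})\ge L$, for a simple curve disjoint from $\alpha$; here one would apply \lemref{Corridor} together with the pair-of-pants estimates \lemref{Arc-In-Pants} and \lemref{Arc-In-Pants2} to the arcs of $\beta$ lying between consecutive intersections with $\gamma$. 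The resulting run is the lift $\tbeta$ we sought, and \propref{Suf-Horizontal} then yields that $\beta$ is $(n_0,L_0)$--horizontal, completing the proof.

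The hard part will be this trapping argument, at two points. First, one must genuinely force a lift of $\beta$ into the narrow region $\Phi$ rather than have all $\I(\beta,\gamma)$ intersections of $\beta$ with $\gamma$ accumulate along $\gamma$ away from the $p_i$; this is where the size of $C_1$ and $n_1$ is consumed, and where one uses crucially that $\alpha$ is a \emph{simple} closed curve whose $n$ intersections with $\gamma$ are spread along $\gamma$. Second, one must rule out that $\beta$ escapes $\Phi$ and re-enters cheaply, which is exactly the role of the divergence estimates \lemref{Quadrant} and \lemref{Diverge}, combined with a lower bound — coming from \lemref{Arc-In-Pants}, \lemref{Arc-In-Pants2} and \lemref{Corridor} — on the length such an excursion must have. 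This is the same sort of delicate hyperbolic bookkeeping already carried out in the proof of \thmref{Horizontal}, and I expect $C_1$ to be chosen so that $C_1/n_0'$ exceeds the maximal number of such excursions a curve disjoint from $\alpha$ can make.
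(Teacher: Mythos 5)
Your opening reduction is fine as far as it goes ($\I(\beta,\gamma)\le C_1$ gives $\dS(X_t,\beta)=O(1)$), but the dichotomy you then set up is the wrong one, and the second branch rests on a false implication. After assuming $\I(\beta,\gamma)>C_1$ you aim to prove that $\beta$ \emph{must} be $(n_0,L_0)$--horizontal; this is not true, and the first step of your ``trapping principle'' --- that a large value of $\I(\beta,\gamma)$ forces some lift $\tbeta$ into the thin region $\Phi$ between $\talpha$ and $\tlambda$ --- is exactly where it breaks. The intersections of $\beta$ with $\gamma$ live on the complementary arcs of $\gamma\setminus\alpha$, and they can all accumulate on arcs of $\gamma$ lying entirely outside the region where $\alpha$ fellow-travels $\lambda$ (for instance, many parallel strands of $\beta$ running through the collar $U(\gamma)$ in a portion of $\gamma$ away from the projections of the $p_i$). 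No choice of $C_1$ or $n_1$ repairs this: $\alpha$'s crossings of $\gamma$ near $\lambda$ do not ``block off'' $\gamma$, so there is no counting argument that forces a strand of $\beta$ between $\talpha$ and $\tlambda$. Such a $\beta$ need not track $\lambda$ at all, so it need not be horizontal --- yet your plan has already discarded the other alternative of the proposition for it.

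The paper's proof splits instead on a topological alternative: using \lemref{Corridor} it builds an $(n_0,L_0)$--corridor $R$ from $\gamma$ and a central sub-arc $\omega$ of $\alpha$ with $\I(\omega,\gamma)=C(n_0,L_0)$ (taking $n_1=3C(n_0,L_0)$ so that, in the universal cover, \emph{both} long sides of the lifted corridor are translates of sub-arcs of $\talpha$ still pinched against $\tlambda$). If $\beta$ meets $R$, then, being disjoint from $\alpha$, it must enter through one end and exit through the other, crossing all $n_0$ arcs of $\gamma$ within $\ep_B$ of $\tlambda$, and \propref{Suf-Horizontal} gives horizontality --- this part matches the spirit of your trapping run. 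But if $\beta$ misses $R$, the conclusion is \emph{not} horizontality: one forms the simple closed curve $\eta$ by concatenating a sub-arc of $\omega$ with a side of the corridor lying on $\gamma$; then $\I(\eta,\gamma)\le C(n_0,L_0)$ and $\beta$ is disjoint from $\eta$, so $\dS(X_t,\beta)\ladd\dS(\gamma,\eta)=O(1)$ even though $\I(\beta,\gamma)$ may be enormous. You need to restructure your argument around this ``meets the corridor or misses it'' alternative; the quantity $\I(\beta,\gamma)$ cannot detect which case you are in.
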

  
  \begin{proof}
     
    Let $n_1 = 3C(n_0,L_0)$ (see \lemref{Corridor}). Also let
    $n \ge n_1$ and $L \ge L_0$.
    
    Suppose $\alpha$ is $(n,L)$--horizontal at $X_t$. Let $\gamma$ be an
    $\ep_B$--short curve on $X_t$,  $\tlambda$ be a lift of a leaf of
    $\lambda_\GL$, $\talpha$ be a lift of $\alpha$ and $\{\tgamma_1,
    \ldots, \tgamma_n\}$ be $n$--lifts of $\tgamma$ together satisfying
    \defref{Horizontal}. Choose a most central segment $\tomega \subset
    \talpha$ between $\tgamma_1$ and $\tgamma_n$ such that $\tomega$
    intersects $C(n_0,L_0)$ lifts of $\gamma$, including two intersections
    coming from the endpoints of $\tomega$. Let $\omega$ be the projection
    of $\tomega$ to $X_t$. If $\I(\omega, \gamma) < C(n_0,L_0)$, then
    $\omega = \alpha$ and we are done since \[ \dS(X_t, \beta) \ladd
    \dS(\gamma, \alpha) \lmul \log C(n_0,L_0) = O(1). \] Otherwise, by
    \lemref{Corridor}, there exists $(n_0,L_0)$--corridor $R$ generated by
    $\gamma$ and $\omega$. Let $\phi \colon [0,a] \times [0,b] \to X_t$ be
    the map whose image is $R$ satisfying the conditions of
    \defref{Corridor}. 
       
    Lift $\phi$ to the map \[ \tilde{\phi} \colon [0,a] \times [0,b] \to
    \tx_t \qquad\text{with}\qquad \tilde{\phi} \big( \{0\} \times [0,b]
    \big) = \tomega. \] Note that $\tilde{\phi} \big( \{a\} \times [0,b]
    \big)$ is a translate of a sub-arc $\tomega'$ of $\talpha$ by an
    isometry of $\HH^2$ fixing $\tlambda$. Also, $\tomega'$ intersects the
    same number of $\tgamma_i$ and $\tomega'$ and $\tomega$ intersect. But
    $n \geq 3C(n_0,L_0)$ and $\tomega$ was central. Thus $\tomega'$ is
    still between $\tgamma_1$ and $\tgamma_n$ and hence is $\ep_B$--close
    to $\tlambda$. 

    Since $\beta$ is disjoint from $\alpha$, if $\beta$ intersects $R$ it
    has to enter from the edge $\phi([0,a] \times \{0 \})$, travel through
    the corridor and exit from the edge $\phi([0,a] \times \{b\})$.
    Therefore, there must exist a lift $\tbeta$ of $\beta$ passing through
    $\tilde{\phi} \big( (0,a) \times [0,b] \big)$ intersecting every
    $\tilde{\phi} \big( [0,a] \times \{t_i\} \big)$. That is, $\tbeta$
    intersects $n_0$ of $\gamma_i$ at a distance at most $\ep_B$ from
    $\tlambda$. Thus, by \propref{Suf-Horizontal} $\beta$ is
    $(n_0,L_0)$--horizontal at $X_t$ if $\beta$ intersects $R$. 

    Now suppose $\beta$ is disjoint from $R$. Fix a parametrization $\psi
    \colon [0,c] \to X_t$ for $\omega$. Let $\omega_1 = \phi \big( \{0\}
    \times [0,b] \big)$ and $\omega_2 = \phi \big( \{a\} \times [0,b]
    \big)$. The parametrization $\psi$ traverses $\omega_1$ or $\omega_2$
    either in same or opposite direction as $\phi$ and either traverses
    $\omega_1$ before $\omega_2$ or vice versa. We assume $\psi$ traverses
    $\omega_1$ in the same direction and speed as $\phi$ and $\psi$
    traverses $\omega_1$ before $\omega_2$ (the proofs in the other cases
    are similar). Let \[ 0 \le s < t < s+b < t+b \le c \] be such that
    $\omega_1= \psi([s,s+b])$ and $\omega_2= \psi([t, t+b])$. Let $\omega'
    = \psi([s,t])$ and $\gamma' = \phi \big( [0,a] \times \{0\} \big)$.
    Consider the curve $\eta$ that is a concatenation of $\omega'$ and
    $\gamma'$. Topologically, $\eta$ is a non-trivial simple closed curve
    with $\I(\eta, \gamma) \le \I(\omega, \gamma) = C(n_0,L_0)$. Since
    $\beta$ is disjoint from $R$, it is disjoint from $\eta$. Therefore, \[
    \dS ( X_t, \beta) \ladd \dS( \gamma, \eta) = O(1). \] This concludes
    the proof of the proposition. \qedhere
     
  \end{proof}

  \begin{proof}[Proof of \thmref{Balanced}]
    
    The proof now follows from \propref{Balanced} and \thmref{Horizontal}.

    Let $n_1$ be as in \propref{Balanced} and $L_0$ be as in
    \thmref{Horizontal}. Let $s > t_\alpha$ be the first time in $[a,b]$
    that $\alpha$ is $(n_1,L_0)$--horizontal at $s$ (let $s=b$ if this
    never happens). By \thmref{Horizontal}, $\pi \circ \GL([t_\alpha, s])$
    has uniformly bounded diameter in \cc. If $s \ge t_\beta$, then we are
    done. Otherwise, $s < t_\beta$ and for any $t \in [s, t_\beta)$, by
    \propref{Balanced}, $\dS(X_t, \beta) = O(1)$. Therefore, $\pi \circ
    \GL([s,t_\beta])$ also has uniformly bounded diameter in \cc. \qedhere

  \end{proof}
  
  \subsection{Retraction}
  
  \begin{theorem} \label{Thm:Retraction}

    Given a Thurston geodesic $\GL \colon [a,b] \to \T$, the map $\cc \to
    \pi \circ \GL \big( [a,b] \big) \subset \cc$ taking a curve $\alpha$ to
    $\pi(X_{t_\alpha})$ is a coarse Lipschitz retraction.

  \end{theorem}
  
  Before proving \thmref{Retraction}, we show how to derive
  \thmref{Shadow}. First we give a precise definition of
  \emph{reparametrized quasi-geodesic}. 
 
  Fix a constant $K > 0$. We will call a path $\phi \colon [a,b]  \to
  \calX$ in a metric space $\calX$ a $K$--quasi-geodesic if for all $a\le s
  \le t \le b$, \[ \frac{1}{K}(t-s) -K \le d_{\calX} \big( \phi(s),\phi(t)
  \big) \le K(t-s) + K.\] We will say $\phi$ is a \emph{reparametrized}
  $K$--quasi-geodesic if there is an increasing function $h \colon [0,n]
  \to [a,b]$ such that $\phi \circ h$ is a $K$--quasi-geodesic.
  Furthermore, for all $i \in [0,n-1]$, we have $\diam_{\calX} \big(
  [\phi(h(i),\phi(h(i+1))] \big) \le K$. In the case that $h$ is not onto,
  we also require that $\diam_{\calX} \big( [\phi(a),\phi(h(0))] \big) \le
  K$ and $\diam_{\calX} \big( [\phi(h(n)),\phi(b)] \big) \le K$. A
  collection $\{ \phi_i \}_{i \in I}$ of reparametrized quasi-geodesics is
  uniform if there is a constant $K$ that works for the collection. 
  
  The following is a restatement of \thmref{Shadow}.

  \begin{theorem} \label{Cor:QG}

    The collection of $ \{ \pi \circ \GL \colon [a,b] \to \cc \}$ ranging
    over Thurston geodesics $\GL \colon [a,b] \to \T$ is a uniform family
    of reparametrized quasi-geodesics in \cc. 

  \end{theorem}
  
  \begin{proof}
    
    This argument is standard \cite{Bow06} and follows easily from
    \thmref{Retraction}. 
    
    Let $\GL \colon [a,b] \to \T$ be a Thurston geodesic. Let $\alpha \in
    \pi\circ \GL(a)$ and $\alpha' \in \pi \circ \GL(b)$ be two curves.
    Choose a geodesic $\alpha=\alpha_0, \ldots, \alpha_n = \alpha'$ in \cc.
    Let $t_i$ be the balanced time of $\alpha_i$ along \GL, and let
    $t_{n+1}=b$. By \thmref{Retraction}, $\diamS \big( [\GL(t_i),
    \GL(t_{i+1})] \big)$ is uniformly bounded. The $t_i$'s may not occur
    monotonically along $[a,b]$, but for each $0\leq i\leq n$, there exists
    $j \geq i$, such that $t_j \le t_i \le t_{j+1}$, and $\diamS \big(
    [\GL(t_i), \GL(t_{j+1})] \big) \le \diamS \big( [\GL(t_j),
    \GL(t_{j+1})] \big) = O(1)$. Thus, there is a sequence $0=i_0 < i_1 <
    \cdots < i_k=n$, such that $t_{i_{j+1}} > t_{i_j}$ and $\diamS \big(
    [\GL(t_{i_j}), \GL( t_{i_{j+1}}) ] \big)$ is uniformly bounded. We will
    call such a sequence admissible and choose one with minimal length $k$.
    For simplicity, we will relabel each $t_{i_j}$ by $t_j$. 
    
    Now let $h \colon [0,k] \to [a,b]$ be defined by sending each
    subinterval $[j,j+1]$ to $[t_j,t_{j+1}]$ by a linear map, for all
    $j=0,\ldots, k-1$. By \thmref{Retraction}, $\diamS \big(
    [\GL(a),\GL(t_0)] \big) = O(1)$ and $\diamS \big([\GL(t_k),\GL(b)]
    \big) = O(1)$. Set $\calG_i = \calG \circ h(i)$. By construction,
    $\diamS \big( [\GL_i, \GL_{i+1} ]\big) = O(1)$, for all $i \in [0,k-1]$
    and $k \le n = \dS(\alpha,\beta)$. Therefore, for all $ 0 \le i \le i'
    \le k$, we have \[ \dS \big( \GL_i, \GL_{i'} \big) \prec i'-i. \]
    
    The only thing remaining to check is that the lower bound for the
    definition of quasi-geodesic: that is, for all $0 \le i \le i' \le k$,
    we will show \[ i'-i \le \dS \big( \GL_i,\GL_{i'} \big) + 2. \] It is
    enough to prove this for $i, i' \in \{0 , \ldots k\}$. For $0 \le i <
    i' \le k$, let $\beta \in \pi \circ \GL_i = \pi \circ \GL(t_i)$ and
    $\beta' \in \pi \circ \GL_{i'} =  \pi \circ \GL(t_{i'})$. Let $m =
    \dS(\beta,\beta')$ and choose a geodesic $\beta=\beta_0, \ldots,
    \beta_m = \beta'$ in \cc. Let $s_i$ be the balance time of $\beta_i$
    along \GL. After choosing a subsequence, we may assume $s_i$ appear
    monotonically along $[a,b]$ and that $t_i < s_0$ and $s_m < t_{i'}$. We
    can modify the admissible sequence \[ t_0 < \cdots < t_i < \cdots <
    t_{i'} < \cdots < t_k \] by
    \[ t_0 < \cdots < t_i < s_0 < \cdots < s_m < t_{i'} < \cdots < t_k, \]
    which is still admissible. By minimality of $k$, we must have $i'-i \le
    m+2$ which proves what we want. \qedhere
    
  \end{proof}
    
    The proof of \thmref{Retraction} requires some technical results about
    hyperbolic surfaces.

    Given a hyperbolic surface $X$, consider a simple closed geodesic
    $\gamma$ that is $\ep_B$--short and a simple geodesic $\lambda$ on $X$.
    Assuming that $\lambda$ intersects $\gamma$ many times, we would like
    to find a simple closed curve $\alpha$ with a uniformly bounded
    intersection number with $\gamma$ that is $(n_0, L_0)$--horizontal. The
    argument here is somewhat delicate since there are essentially two
    possible situations; either $\lambda$ twists around a relatively short
    curve $\alpha$ or $\alpha$ is somewhat longer and a long subsegment of
    it stays close to $\lambda$ in the universal cover. 

    We find the appropriate curve $\alpha$ by applying surgery between
    $\lambda$ and $\gamma$ such that $\alpha$ contains a long sub-segment of
    $\lambda$. But we also need to have some control such that after pulling
    $\alpha$ tight it still stays close to $\lambda$. The following two
    lemmas will give the needed control. 
    
    In the following, orient the curve $\gamma$ so the left side and the
    right side of $\gamma$ are defined. We will say an arc $\omega$ with
    endpoints on $\gamma$ hits $\gamma$ on opposite sides if the two
    endpoints of $\omega$ are on different sides of $\gamma$; otherwise,
    $\omega$ hits $\gamma$ on the same side. 

    \begin{lemma} \label{Lem:QuasiCurve}
     
      Let $\gamma$ and $\lambda$ be as above and let $\alpha = \eta \cup
      \omega$ be a closed curve in $X$ that is obtained from concatenation
      of a sub-arc $\eta$ of $\gamma$ and a sub-arc $\omega$ of $\lambda$.
      Also, assume that $\eta$ hits $\omega$ on opposite sides and
      $L=\ell_X(\omega) \geq 4 \ep_B$. Then, $\alpha^*$, the geodesic
      representative of $\alpha$ in $X$, stays in an
      $O(\ep_B)$--neighborhood of $\alpha$.
        
  \end{lemma}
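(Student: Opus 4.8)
The plan is to pass to the universal cover $\tx\cong\HH^2$ and work with a lift $\talpha$ of the closed curve $\alpha$. This lift is a bi-infinite piecewise-geodesic path invariant under the deck transformation $g$ representing $\alpha$, of the form $\talpha=\bigcup_{i\in\ZZ}(\teta_i\cup\tomega_i)$, where each $\teta_i$ is a lift of $\eta$ lying on a lift $\tgamma_i$ of $\gamma$ (so its length is $\ell_X(\eta)\le\ell_X(\gamma)\le\ep_B$), each $\tomega_i$ is a lift of $\omega$ lying on a lift $\tlambda_i$ of $\lambda$ (so its length is $L=\ell_X(\omega)\ge 4\ep_B$), and $\tomega_i$ ends where $\teta_{i+1}$ begins, etc. The geodesic representative $\alpha^*$ lifts to the axis $\talpha^*$ of $g$, so the lemma is equivalent to saying $\talpha^*$ stays within distance $O(\ep_B)$ of $\talpha$. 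Taking $\omega$ with interior disjoint from $\gamma$ (as in the surgery construction, where $\omega$ lies between consecutive intersections of $\lambda$ with $\gamma$), the interior of each $\tomega_i$ misses every lift of $\gamma$; combined with the hypothesis that $\omega$ leaves $\gamma$ on opposite sides at its two endpoints, this forces that each $\tomega_i$ joins $\tgamma_i$ to a \emph{distinct} lift $\tgamma_{i+1}$, stays in the open strip between them, and leaves $\tgamma_i$ on the side opposite the one from which $\tomega_{i-1}$ arrives. Since $\gamma$ and $\lambda$ are simple, the $\tgamma_i$ (and the $\tlambda_i$) are pairwise disjoint, so $\tgamma_{i+1}$ separates $\tgamma_i$ from $\tgamma_{i+2}$ for every $i$; thus $\{\tgamma_i\}$ is linearly ordered and $\talpha$ crosses it monotonically. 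In particular the endpoints of $\talpha$ at infinity (the fixed points of $g$) are separated on $\partial\HH^2$ by the endpoints of each $\tgamma_i$, so $\talpha^*$ crosses every $\tgamma_i$, in order (consistently, $\I(\alpha,\gamma)=1$).

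The argument then reduces to the \textbf{local claim}: $\talpha^*$ crosses each $\tgamma_i$ at a point $x_i^*$ with $\dH(x_i^*,\teta_i)=O(\ep_B)$. Granting this, fix $i$ and let $S_i$ be the strip between $\tgamma_i$ and $\tgamma_{i+1}$, so that $\talpha^*\cap\overline{S_i}$ is the geodesic segment $[x_i^*,x_{i+1}^*]$. Since $\teta_i$ has length $\le\ep_B$, and since $\tomega_i$ issues from an endpoint of $\teta_i$ and terminates at an endpoint of $\teta_{i+1}$, the endpoints of $[x_i^*,x_{i+1}^*]$ lie within $O(\ep_B)$ of the endpoints of $\tomega_i$; by $\delta_\HH$-slimness of the resulting geodesic quadrilateral, $[x_i^*,x_{i+1}^*]$ lies within $O(\ep_B)$ of $\tomega_i$, hence of $\talpha$. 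As $\talpha^*$ crosses the $\tgamma_i$ in order it is covered by the closed strips $\overline{S_i}$, so $\talpha^*$ stays within $O(\ep_B)$ of $\talpha$; this descends to the statement that $\alpha^*$ lies in an $O(\ep_B)$-neighborhood of $\alpha$.

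For the local claim I would run the sandwiching argument from the proof of \thmref{Horizontal} at each $\tgamma_i$. Because $\gamma$ is $\ep_B$-short, its standard collars $U(\tgamma_i)$ are pairwise disjoint of width $w\eadd\log(1/\ell_X(\gamma))$ (bounded below by a universal constant), so $\dH(\tgamma_i,\tgamma_{i+1})\ge 2w$, and the deck element of axis $\tgamma_i$ translates by $\ell_X(\gamma)\le\ep_B$; since $\ell(\tomega_i)=L\ge 4\ep_B$, each $\tomega_i$ genuinely crosses the relevant half-collars, monotonically in the distance to the core (as in \lemref{Arc-In-Pants}(b)). Applying \propref{Sandwich} with $\phi_i$ the deck element of axis $\tgamma_i$, together with \lemref{Quadrant} and \lemref{Diverge} applied to the collar boundaries, one shows that the relevant endpoint of $\talpha$ (hence of $\talpha^*$) is trapped between the corresponding endpoint of $\tlambda_i$ and its image under a power $\phi_i^k$ with $|k|\,\ell_X(\gamma)=O(\ep_B)$; since $\tlambda_i$ and $\phi_i^k(\tlambda_i)$ cross $\tgamma_i$ at two points that are $O(\ep_B)$ apart, this pins $x_i^*$ to within $O(\ep_B)$ of $\teta_i$. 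The main obstacle is precisely this local claim. Although the risers $\tomega_i$ are geodesic segments, their portions inside the collar of $\gamma$ can be long (they wind around $\gamma$ many times when $\gamma$ is short), so $\talpha$ is not a $(1,O(\ep_B))$--quasi-geodesic and a crude length-versus-distance estimate does not yield the sharp neighborhood bound; the point is instead that all this winding is \emph{twisting around $\gamma$}, which $\talpha^*$ is forced to replicate up to an additive $O(1)$ -- exactly the content of \propref{Sandwich} and the quadrant lemmas, and this is where the two hypotheses ($\gamma$ being $\ep_B$-short, hence of small translation length, and $\ell_X(\omega)\ge 4\ep_B$, so the risers cross the collars and separate the relevant quadrants) are used in an essential way.
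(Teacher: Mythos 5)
Your global strategy (lift to $\HH^2$, order the lifts $\tgamma_i$ carrying the $\gamma$--arcs, cover the axis $\talpha^*$ by the strips between consecutive $\tgamma_i$, and finish by slimness in each strip) is a genuinely different organization from the paper's proof, which works entirely at infinity: the paper observes that the complete geodesics $\tlambda_i$ containing the lifts $\bomega_i$ of $\omega$ are disjoint and non-backtracking, that the visual angle at the midpoint $o$ of $\bomega_0$ between the endpoints of $\tlambda_i$ and $\tlambda_{i+1}$ decays like $\ep_B e^{-L/2}$ times a geometric factor, and hence that the fixed points of the deck transformation (the endpoints of $\talpha^*$) lie within visual angle $O(\ep_B e^{-L/2})$ of the endpoints of $\tlambda_0$, which immediately places $\bomega_0$ in an $O(\ep_B)$--neighborhood of $\talpha^*$. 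Your reduction to the ``local claim'' is fine as far as it goes, but the local claim is exactly where the content of the lemma sits, and your proposed proof of it does not work as written. \propref{Sandwich} takes as \emph{input} that two geodesics cross the axis $\tgamma_i$ at points at distance $\le \ep$ and outputs that their endpoints at infinity are interleaved; you need the converse direction, and worse, to apply it with $\alpha=\talpha^*$ and $\beta=\tlambda_i$ you would need to already know that $\talpha^*$ crosses $\tgamma_i$ within $O(\ep_B)$ of $\bar\eta_i$ --- which is the local claim itself. What is actually missing is a global localization of the two fixed points of the deck transformation on $\partial\HH^2$; the only way to get it is to exploit the nesting of the whole bi-infinite family $\{\tlambda_i\}$ with a quantitative (summable) estimate on how fast the endpoints converge, and that is precisely the Cauchy-sequence-of-endpoints computation you have not supplied. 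Without it, knowing only that consecutive $\tlambda_i$ meet $\tgamma_{i+1}$ at points $\le\ep_B$ apart does not prevent the cumulative drift of the endpoints from being large, so the crossing point $x_i^*$ is not pinned down.

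Two smaller problems. First, you insert the hypothesis that the interior of $\omega$ is disjoint from $\gamma$, attributing it to the surgery construction; this is false in the intended application (in Case (1) of \propref{HorExist} the $\lambda$--arc meets $\gamma$ at least $N_0$ times, which is the whole point), and it is not a hypothesis of the lemma. Your strip structure actually survives without it --- the lifts $\tgamma_i$ you need are only the ones carrying the $\bar\eta_i$, and $\tgamma_{i+1}$ still separates $\tgamma_i$ from $\tgamma_{i+2}$ by the ``opposite sides'' condition --- but the parenthetical $\I(\alpha,\gamma)=1$ is wrong and signals that you are proving a special case. Second, you correctly observe that $\bomega_i$ may wind many times inside the collar of $\gamma$ so that $\talpha$ is not a uniform quasi-geodesic; but the resolution is not the quadrant lemmas applied to collar boundaries, it is the disjointness of the complete geodesics $\tlambda_i$ (which encodes the fact that $\talpha^*$ must replicate the winding), and that disjointness is used quantitatively only through the convergence of endpoints at infinity.
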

  
  \begin{proof}
    
    This is a well known fact in hyperbolic geometry. We sketch the proof
    here. Consider the lift of $\alpha$ to $\HH^2$ as a concatenation of
    segments $\etab_i$ and $\bomega_i$ that are lifts of $\eta$ and
    $\omega$ respectively. Since $\omega$ is a sub-arc of a complete simple
    geodesic in $X$, the segments $\bomega_i$ lie on complete geodesics
    $\tomega_i$ in $\HH^2$ that are disjoint. The condition that $\eta$
    hits $\omega$ on opposite sides means that $\bomega_{i+1}$ does not
    backtrack along $\omega_i$.
  
    Fixing $o$, the center of segment $\omega_0$, as the center of the
    Poincar\'e disk, the Euclidean distance between and endpoints of
    geodesics $\tomega_i$ in $\partial \HH^2$ form a Cauchy sequence (in
    fact, they decrease exponentially fast) and hence they converge.
    Namely, the visual angle at $o$ of the endpoint of $\tomega_1$ is at
    most $O(\ep_Be^{-L/2})$ and the visual angle at $o$ between the
    endpoints of $\tomega_i$ and $\tomega_{i+1}$ decrease exponentially
    with $|i|$. Hence the lift of $\gamma^*$ starts and ends near the
    endpoints of $\tomega_0$ with a visual angle of $O(\ep_B e^{-L/2})$.
    Therefore, an $O(\ep_B)$--neighborhood of lift of $\gamma^*$ contains
    $\bomega_0$. This finishes the proof. \qedhere 
    
  \end{proof} 
  
  \begin{lemma} \label{Lem:QuasiCurve2}
     
    Let $\beta$ and $\beta'$ be simple closed curves in $X$ (possibly
    $\beta=\beta'$) with lengths longer than $\delta_B$ and let $\eta$ be a
    geodesic segment that is disjoint from both. Let $\gamma$ and $\gamma'$
    be two segments of length $O(1)$ connecting the endpoints of $\eta$ to
    $\beta$ and $\beta'$ respectively so the curve $\alpha$ obtained by
    the concatenation
    \[
    \beta \cup \gamma \cup \eta \cup \gamma' \cup \beta'
    \cup \gamma' \cup \eta \cup \gamma
    \]
    is simple. Let $\alpha^*$ be the geodesic representative of $\alpha$.
    Then, in the universal cover,  any lift of $\eta$ is contained in a
    bounded neighborhood of the union of a lift of $\alpha^*$, a lift of
    $\beta$ and a lift of $\beta'$. 
            
  \end{lemma}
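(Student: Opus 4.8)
The plan is to pass to the universal cover, recognize a natural broken-geodesic lift of $\alpha$ as a uniform quasi-geodesic, and then deduce the statement from the Morse lemma in $\HH^2$ together with the fact that $\gamma$ and $\gamma'$ have length $O(1)$. Fix a lift $\teta$ of $\eta$ in $\HH^2$ and let $\talpha$ be the lift of $\alpha$ containing $\teta$, viewed as a bi-infinite concatenation of lifts of the arcs $\beta,\gamma,\eta,\gamma',\beta',\gamma',\eta,\gamma$ occurring periodically; $\talpha$ is invariant under the hyperbolic deck transformation $g$ representing $\alpha$. Over one period its pieces are: a lift of $\beta$ (length $\ell_X(\beta)\ge\delta_B$), a lift of $\gamma$ (length $O(1)$), a lift of $\eta$, a lift of $\gamma'$ (length $O(1)$), a lift of $\beta'$ (length $\ell_X(\beta')\ge\delta_B$), a lift of $\gamma'$, a lift of $\eta$, and a lift of $\gamma$. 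Two structural remarks: first, since $\alpha$ is simple and essential it traverses all of $\beta$ and all of $\beta'$, which forces $\beta$ and $\beta'$ to be disjoint (or equal, handled the same way); hence, using that $\beta$, $\beta'$, $\eta$ are pairwise disjoint simple curves/arcs, all the complete geodesics of $\HH^2$ carrying the various lifts of $\beta$, of $\beta'$, and of $\eta$ that occur along $\talpha$ are pairwise disjoint. Second, write $\widetilde{\alpha^*}$ for the lift of $\alpha^*$ with the same pair of endpoints on $\partial\HH^2$ as $\talpha$, that is, the axis of $g$.

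The main step is to show $\talpha$ is a quasi-geodesic with universal constants, from which the Morse lemma gives that $\talpha$, hence $\teta$, lies within a universal Hausdorff distance $D_1$ of $\widetilde{\alpha^*}$. If $\ell_X(\eta)=O(1)$ this is unnecessary: then $\teta$ has length $O(1)$ and lies within $O(1)$ of the lift of $\beta$ adjacent to it along $\talpha$ (they are joined by the length-$O(1)$ lift of $\gamma$), so $\teta$ already lies in an $O(1)$-neighborhood of a lift of $\beta$. So assume $\ell_X(\eta)$ is larger than a fixed multiple of $\delta_B$. Then $\talpha$ is a concatenation of geodesic segments each of length $\ge\delta_B$ --- the once-around loops of $\beta$ and $\beta'$ and the copies of $\eta$ --- separated by hops of length $O(1)$ (the copies of $\gamma$, $\gamma'$). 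I would rule out backtracking in the spirit of the proof of \lemref{QuasiCurve}: the carrier geodesics are pairwise disjoint, so consecutive ones are crossed in an unnested order; and the one genuinely delicate configuration --- two consecutive copies of $\eta$ traversed in opposite directions with a once-around loop of $\beta$ (resp. $\beta'$) between them --- is controlled because going once around $\beta$ (resp. $\beta'$) displaces the second copy of $\eta$ from the first by at least $\delta_B$. Feeding the lower bound $\delta_B$ on the segment lengths and the $O(1)$ bound on the hops into the standard broken-geodesic (Dehn's lemma) estimate then yields universal quasi-geodesic constants.

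Combining the two cases: choose the lifts of $\alpha^*$, $\beta$, $\beta'$ appearing in the statement to be $\widetilde{\alpha^*}$ and the two lifts of $\beta$, $\beta'$ adjacent to $\teta$ along $\talpha$. In the case $\ell_X(\eta)=O(1)$ we showed $\teta$ is within $O(1)$ of the lift of $\beta$, and otherwise $\teta$ is within $D_1$ of $\widetilde{\alpha^*}$; either way $\teta$ lies in a universally bounded neighborhood of $\widetilde{\alpha^*}\cup(\text{lift of }\beta)\cup(\text{lift of }\beta')$, as required.

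\textbf{Main obstacle.} The delicate point is the uniform quasi-geodesic estimate for $\talpha$. Its turning angles --- between a lift of $\beta$ and a lift of $\eta$, or where a copy of $\gamma$ meets a lift of $\beta$ --- need not be bounded away from $0$ or $\pi$, so the usual ``long segments plus definite angles'' criterion does not apply. What rescues the argument is that every segment one runs along has length at least the fixed constant $\delta_B$ and that all the carrier geodesics are pairwise disjoint (this is where simplicity of $\alpha$ and of $\beta$, $\beta'$, $\eta$ enters); converting these two facts into a genuinely uniform bound, as in the proof of the preceding lemma, is the part that requires care.
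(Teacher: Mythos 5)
Your key step --- that the broken-geodesic lift $\talpha$ is a quasi-geodesic with universal constants, so that the Morse lemma puts $\teta$ uniformly close to the lift of $\alpha^*$ --- is false, and the failure occurs exactly in the ``delicate configuration'' you single out. The observation that going once around $\beta'$ displaces the second copy of $\eta$ from the first by at least $\delta_B$ does not prevent the two copies from fellow traveling for an arbitrarily long time: two geodesics exchanged by a hyperbolic isometry $\psi$ of translation length $\geq\delta_B$ can both run nearly parallel to the axis of $\psi$ and stay a bounded distance apart for as long as one likes. Concretely, let $\eta$ spiral $N$ times around $\beta'$ while remaining disjoint from it (this is not a pathology --- it is precisely the situation in Case (2) of \propref{HorExist}, where the lemma is applied to a piece of $\lambda$ that may twist around $\beta$ many times, and the resulting $\alpha$, a band sum of $\beta$ and a conjugate of $\beta'$, is still simple). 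In the universal cover the terminal portion of $\eta_i$, of length about $N\ell_X(\beta')$, fellow travels the axis of $\psi$, and so does the initial portion of $\overline{\eta}_i=\psi^{\pm1}(\eta_i)$. The subpath of $\talpha$ running to the end of $\eta_i$, once around $\beta'$, and back along $\overline{\eta}_i$ then has length about $(2N+1)\,\ell_X(\beta')$ while its endpoints, which differ by $\psi^{\pm1}$ and lie within $O(1)$ of its axis, are only about $\ell_X(\beta')$ apart. So no uniform quasi-geodesic constants exist. Indeed, if your claim held, the conclusion could be upgraded to ``$\teta$ lies in a bounded neighborhood of the lift of $\alpha^*$ alone,'' which is false in this example: the spiraling part of $\eta_i$ cancels against its mirror copy, is far from the axis of $\alpha^*$, and is close only to the lift of $\beta'$. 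This is why the statement includes lifts of $\beta$ and $\beta'$ in the union, and why the paper opens its proof by saying the lemma is nontrivial precisely because the two copies of $\eta$ may backtrack.

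The argument the paper actually runs accepts the backtracking and localizes it instead of excluding it. Using \lemref{Diverge} (applied to the isometry with axis through $\beta_i'$ carrying $\eta_i$ to $\overline{\eta}_i$, together with the lower bound $\delta_B$ on $\ell_X(\beta')$, which gives a definite lower bound on how far apart the two copies enter a fixed neighborhood $U_i$ of that axis), one shows that outside $U_i$ the segments $\eta_i$ and $\overline{\eta}_i$ stay at least $d_0$ apart; so they can fellow travel only inside a uniform neighborhood of a lift of $\beta'$ (and similarly for $\overline{\eta}_i$, $\eta_{i+1}$ and $\beta_i$). Combining this with the general hyperbolicity fact that every point of the closed broken path is either close to the geodesic representative or close to some other segment of the path yields the stated trichotomy: each point of $\eta_i$ is near the lift of $\alpha^*$, a lift of $\beta$, or a lift of $\beta'$. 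Separately, even absent the cancellation issue, your ``Dehn's lemma'' step would need more than segment lengths $\geq\delta_B$ and $O(1)$ hops with no angle control; that combination alone does not produce uniform quasi-geodesic constants.
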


  \begin{proof}
  
  The lemma is non-trivial because two copies of $\eta$ is used and they
  may backtrack each other. It is essential that there is a lower bound on
  the lengths of $\beta$ and $\beta'$ and the lemma essentially follows
  from \lemref{Diverge}.  
    
  Consider a lift of $\alpha$ to the universal cover. Ignoring the lifts of
  $\gamma$ and $\gamma'$ which have bounded length, we consider the
  segments $\beta_i \cup \eta_i \cup \beta_i' \cup \overline{\eta}_i$ where
  $\eta_i$ and $\overline{\eta}_i$ are lifts of $\eta$, $\beta_i$ are lifts
  of $\beta$, $\beta_i'$ are lifts of $\beta'$ and endpoints of every
  segment are in a uniformly bounded neighborhood of an endpoint of the
  next segment.
    
  The segments $\eta_i$ and $\overline{\eta}_i$ lie on geodesics
  $\lambda_i$ and $\blambda_i$ that are disjoint. In fact, there is an
  isometry of $\HH^2$, associated to the curve $\beta'$, whose axis
  contains $\beta_i'$ and sends $\eta_i$ to $\overline{\eta}_i$. Let
  $\delta_0$ be large enough such that the $\delta_0$--neighborhood of
  $\beta'$ contains the standard collar $U(\beta')$. Then $\delta_0$ is a
  universal constant since there is a lower bound on the length of
  $\beta'$. Let $U_{\delta_0}(\beta')$ be the $\delta_0$--neighborhood of
  $\beta'$ and $U_i$ be the lift that contains $\beta_i'$. There is a
  universal lower bound on the length of the boundary of
  $U_{\delta_0}(\beta')$ which means there is a lower bound on the distance
  between the intersection points of $\eta_i$ and $\overline{\eta}_i$ with
  $U_i$. It now follows from \lemref{Diverge} that there is a lower bound
  of $d_0$ for the distance between the subsegments of $\eta_i$ and
  $\overline{\eta}_i$ that are outside of $U_i$. That is, if $\eta_i$ is
  not near $\beta_i$, it cannot be too close to $\overline{\eta}_i$ and
  hence $\eta_i$ and $\overline{\eta}_i$ do not fellow travel for a long
  time outside of a uniform neighborhood of $\beta_i'$. A similar statement
  is true for $\overline{\eta}_i$, $\eta_{i+1}$ and $\beta_i$. 
  
  Since $\HH$ is Gromov hyperbolic, the lift of $\alpha^*$ is contained in
  a uniform neighborhood of segments  $\beta_i \cup \eta_i \cup \beta_i'
  \cup \overline{\eta}_i$. In fact, each point in any of these segments is
  either close to the lift of $\alpha^*$ or close to a point in some other
  segment. We have shown that $\eta_i$ and $\overline{\eta}_i$ do not
  fellow travel for large subsegment. This means any point in $\eta_i$ is
  close to either $\beta_i$, $\beta_i'$, or to the lift of $\alpha^*$.
  \qedhere
  
  \end{proof}

  We would like to show that, at every time $t$, there is a curve $\alpha$
  which has balanced time $t_\alpha=t$ and whose distance in \cc from the
  shadow of $X_t$ is uniformly bounded.  The next proposition shows that a
  coarse version of this statement holds. 

  \begin{proposition} \label{Prop:HorExist}
     
    Let $\GL \colon [a,b] \to \T$ be a Thurston geodesic segment. For any
    $t \in [a,b]$, if $\lambda_{\GL}$ intersects some short curve on
    $X_t=\GL(t)$, then there exists an $(n_0,L_0)$--horizontal curve
    $\alpha$ on $X_t$ such that $\I(\alpha,\gamma) = O(1)$ for any
    $\ep_B$--short curve $\gamma$. Furthermore, $\dS(X_{t_\alpha},X_t) =
    O(1)$.
     
  \end{proposition}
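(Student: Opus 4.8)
The strategy is to manufacture $\alpha$ by cutting out a long subarc of a leaf $\lambda$ of $\lambda_{\GL}$ and closing it up into a \emph{simple} closed curve of uniformly bounded length whose geodesic representative $\alpha^*$ still $O(\ep_B)$--fellow travels $\lambda$ for a long time in $\tx_t$, so that \propref{Suf-Horizontal} applies. The two requirements are in tension --- ``$\alpha$ contains a long subsegment of $\lambda$'' pushes $\alpha$ to be long, while ``$\I(\alpha,\gamma)=O(1)$ for every $\ep_B$--short $\gamma$'' pushes it to be short --- and \lemref{QuasiCurve} and \lemref{QuasiCurve2} are exactly the tools needed to keep both under control. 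To set up, fix an $\ep_B$--short curve $\gamma$ on $X_t$ crossed by $\lambda_{\GL}$ and a leaf $\lambda$ of $\lambda_{\GL}$ crossing $\gamma$; since leaves do not spiral and $\gamma$ is $\ep_B$--short, $\lambda$ meets $\gamma$ transversely. If $\lambda$ is a closed geodesic (the stump is a simple closed curve $\gamma_0$) and $\gamma_0$ is itself $\ep_B$--short, take $\alpha=\gamma_0$: it is a leaf of $\lambda_{\GL}$, so a sufficiently spread-out subfamily of crossings of lifts of $\gamma_0$ with lifts of $\gamma$ verifies (H1)--(H2) with anchor $\gamma$, and $\I(\gamma_0,\gamma')=O(1)$ for every $\ep_B$--short $\gamma'$ by the Collar Lemma. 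Otherwise $\lambda$ has subarcs of unbounded length, and we proceed with the surgery.

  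Call a subarc of $\lambda$ \emph{legal} if it crosses each $\ep_B$--short curve of $X_t$ at most once. Since $X_t$ carries boundedly many $\ep_B$--short curves and consecutive crossings of a geodesic with an $\ep_B$--short curve are a definite distance apart (the standard collar has definite width), a pigeonhole argument gives a dichotomy: either (A) $\lambda$ has legal subarcs of arbitrarily large length, or (B) some single $\ep_B$--short curve $\beta$ of $X_t$ is crossed by $\lambda$ with bounded gaps, in which case, the crossings being at least twice the collar width of $\beta$ apart, $\ell_{X_t}(\beta)$ is bounded below and $\lambda$ winds around $\beta$. In case (A), choose a legal subarc $\omega$ of $\lambda$ of length a large universal constant $L_1\ge 4\ep_B$ with $L_1\gg n_0L_0$. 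If its endpoints can be taken on $\gamma$ with $\omega$ crossing $\gamma$ in opposite directions there (arranged by passing to a subarc between $\gamma$--crossings of even index difference), let $\eta$ be the sub-arc of $\gamma$ joining the endpoints of $\omega$ containing none of the interior $\gamma$--crossings of $\omega$; then $\ell_{X_t}(\eta)\le\ep_B$, the curve $\alpha:=\omega\cup\eta$ is simple, $\ell_{X_t}(\alpha^*)=O(1)$, legality gives $\I(\alpha,\gamma')=O(1)$ for every $\ep_B$--short $\gamma'$, and \lemref{QuasiCurve} shows $\alpha^*$ stays in an $O(\ep_B)$--neighborhood of $\omega\subset\lambda$. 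When the ends of $\omega$ cannot be placed so conveniently, instead close $\omega$ up by parking its two ends around $\ep_B$--short curves of $X_t$ lying near them (each of definite length), in the concatenation pattern of \lemref{QuasiCurve2}, and conclude from \lemref{QuasiCurve2} that a lift of the middle of $\omega$ lies in a bounded neighborhood of a lift of $\alpha^*$; once more $\alpha$ has bounded length and bounded intersection with all short curves. Case (B) is handled the same way, performing the surgery between $\lambda$ and the short curve $\beta$ around which $\lambda$ winds and invoking \lemref{QuasiCurve2}. In every case a lift of $\alpha^*$ $O(\ep_B)$--fellow travels a lift of $\lambda$ over a segment of length $\gg n_0L_0$, so \propref{Suf-Horizontal} shows $\alpha$ is $(n_0,L_0)$--horizontal at $X_t$.

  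It remains to verify the ``furthermore''. Since $\ell_{X_t}(\alpha^*)=O(1)$, the curve $\alpha$ is within bounded distance in $\cc$ of the systoles of $X_t$, so $\dS(\alpha,X_t)=O(1)$; in particular $t_\alpha\le t$. If $t-t_\alpha\le s_0$, then $\dS(X_{t_\alpha},X_t)=O(1)$ because the shadow map is Lipschitz. If $t-t_\alpha\ge s_0$, apply \thmref{Horizontal}(II) from $t_\alpha$ to $t$: if $\dS(X_{t_\alpha},X_t)$ were large, then the horizontal complexity $n_t$ of $\alpha$ witnessed at $X_t$ would be forced to be large. But a curve of uniformly bounded length on $X_t$ can $O(\ep_B)$--fellow travel any leaf of $\lambda_{\GL}$ in $\tx_t$ only over uniformly bounded total length --- a longer shadowing would force two distinct leaves of $\lambda_{\GL}$ to stay $O(\ep_B)$--close over a long range, which (in the thick part directly, and in a thin collar because a bounded-length curve contributes only bounded length to winding there) is impossible --- so $\alpha$ can witness only uniformly bounded $(n,L)$--horizontality at $X_t$, i.e.\ $n_t=O(1)$. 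Hence $\dS(X_{t_\alpha},X_t)\prec\log n_t=O(1)$.

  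The main obstacle is case (B): showing that a leaf forced to cross a short curve $\beta$ very frequently must in fact wind around $\beta$, and then keeping control of the geodesic representative of the surgered curve through \lemref{QuasiCurve2}. The same circle of ideas --- bounding how long a bounded-length geodesic can shadow a leaf of a geodesic lamination --- is what makes the complexity bound $n_t=O(1)$ in the ``furthermore'' go through, and both rely on the hyperbolic-geometry estimates collected in \secref{Hyperbolic}.
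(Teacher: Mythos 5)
Your overall strategy --- surgery on a long subarc of a leaf of $\lambda_\GL$, closed up via \lemref{QuasiCurve} or \lemref{QuasiCurve2}, followed by an appeal to \thmref{Horizontal} for the ``furthermore'' --- matches the paper's, but the execution of your Case (A) misses the essential point of \defref{Horizontal}. Being $(n_0,L_0)$--horizontal is \emph{not} the same as $O(\ep_B)$--fellow-traveling a leaf over a long segment: condition (H1) requires the shadowed segment $[p_1,p_{n_0}]$ of $\tlambda$ to cross $n_0$ distinct lifts of a \emph{single} $\ep_B$--short anchor curve, spaced at least $L_0$ apart, and \propref{Suf-Horizontal} takes (H1) as a hypothesis rather than producing it. Your ``legal'' subarc crosses each short curve at most once by construction, so the leaf segment it shadows meets at most one lift of each short curve; the resulting $\alpha$ satisfies only the geometric fellow-traveling condition, there is no anchor curve for which (H1) holds, and (as \exaref{Shear} shows) geometric fellow-traveling alone does not persist along the geodesic --- which is exactly what the definition of $t_\alpha$ and \corref{Balanced} need. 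The paper's construction goes in the opposite direction: it chooses $\omega\subset\lambda$ with exactly $5N_0$ intersections with some short curve $\gamma$ (where $N_0\approx n_0\lceil L_0/\delta_B\rceil$) and at most $5N_0$ with every other short curve, and the three surgery cases are engineered so that the surviving subarc retains at least $N_0$ of the crossings with $\gamma$; the $L_0$--spacing then comes from keeping only every $\lceil L_0/\delta_B\rceil$--th crossing. Your dichotomy should be replaced by the paper's: either some closed leaf meets every short curve at most $5N_0$ times (take $\alpha$ to be that leaf), or some leaf meets a single short curve more than $5N_0$ times (do the surgery). Relatedly, $\ell_{X_t}(\alpha^*)=O(1)$ is neither claimed nor achievable in general (the crossings with $\gamma$ may run deep into a wide collar); only $\I(\alpha,\gamma')=O(1)$ is asserted.

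The ``furthermore'' also has a gap. The claim that a curve of uniformly bounded length can $O(\ep_B)$--fellow-travel a leaf of $\lambda_\GL$ only over uniformly bounded total length is false: a closed leaf of bounded length shadows itself forever, and a leaf spiraling onto (or crossing at a very small angle) a short closed geodesic $\alpha$ stays close to the axis of $\alpha$ over arbitrarily long ranges without forcing any contradictory configuration of distinct leaves --- nearby disjoint leaves in a thin collar, or asymptotic leaves, can stay $O(\ep_B)$--close for arbitrarily long. So the bound $n_t=O(1)$ does not follow from your argument. The correct obstruction is the intersection bound from the first part: if $D=\dS(\gamma_\alpha,\alpha)$ were large, then $\I(\alpha,\gamma_\alpha)\gmul 2^D$, and pushing the configuration forward by the optimal map as in the proof of \thmref{Horizontal}(II) produces a subsegment of the geodesic representative of $\alpha$ on $X_t$ that projects at most boundedly-to-one to $\alpha$ and crosses $\gmul n_0 2^{D}$ lifts of some $\ep_B$--short curve $\gamma'$ on $X_t$, whence $\I(\alpha,\gamma')\gmul n_0 2^D$. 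This contradicts $\I(\alpha,\gamma')=O(1)$ unless $D=O(1)$, which is what bounds $\dS(X_{t_\alpha},X_t)$.
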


  \begin{proof}
   
    First, we will construct $\alpha$. Let 
    \[
      N_0= n_0\, \left\lceil \frac{L_0}{\delta_B} \right\rceil + K,
    \] 
    where $K$ is the additive error coming from \lemref{QuasiCurve2}.

    If $\lambda_{\GL}$ has a closed leaf $\lambda$ that intersects every
    short curve on $X_t$ at most $5N_0$--times, then $\alpha=\lambda$ has
    the desired properties. Otherwise, we can fix a leaf $\lambda$ in the
    stump of $\lambda_\GL$ that intersects some $\ep_B$--short curve more
    than $5N_0$ times. 

    Fix a segment $\omega$ of $\lambda$ such that $\omega$ has endpoints on
    a short curve $\gamma$ with $\I(\gamma, \omega) = 5N_0$, and $\omega$
    intersects all other short curves at most $5N_0$ times. Orient
    $\gamma$ such that we can talk about the two sides of $\gamma$. We will show
    that, applying a surgery between $\omega$ and $\gamma$, we can obtain a
    simple closed curve $\alpha$ that still intersects $\gamma$ and stays
    close to $\lambda$ for a long time. Unfortunately, this process is
    delicate and depending on the intersection pattern of $\gamma$ and
    $\omega$ we may have to apply a different surgery.  
    The conclusion will follow if either of the following two cases occur.
    
    {\bf Case (1)}: There is a sub-arc $\eta$ of $\omega$ that hits
    $\gamma$ on opposite sides with $\I(\eta,\gamma) \ge N_0$, and the
    endpoints of $\eta$ can be joined by a segment of $\gamma$ that is
    disjoint from the interior of $\eta$. 

    In this case, the geodesic representative $\alpha$ of the concatenation
    of $\eta$ and a segment of $\gamma$ is $(n_0,L_0)$--horizontal by
    \lemref{QuasiCurve} (see left side of \figref{Surgery1}). 
       
    {\bf Case (2)}: There is a sub-arc $\eta$ of $\omega$ and a closed
    curve $\beta$ disjoint from $\eta$, such that $\I(\eta, \gamma) \ge
    N_0$ and $\ell_X(\beta) \ge \delta_B$, and the endpoints of $\eta$ are
    close to the same point on $\beta$. Furthermore, each endpoint of
    $\eta$ can be joined to a nearby point on $\beta$ by a segment of
    $\gamma$ that is disjoint from $\beta$ and the interior of $\eta$.
      
    In this case (see right side of \figref{Surgery1}), let $\alpha$ be the
    curve obtained by closing up $\eta$ with $\beta$ and one or two sub
    arcs of $\gamma$. If $\eta$ twists around $\beta$ $N_0$--times, then
    $\beta$ is $(n_0,L_0)$--horizontal by \propref{Suf-Horizontal};
    otherwise, by \lemref{QuasiCurve2}, $\alpha$ has a segment that
    intersects $\gamma$ $N_0$--times and stays close to $\eta$, in which
    case $\alpha$ is $(n_0,L_0)$--horizontal by \propref{Suf-Horizontal}.

    {\bf Case (3)}: There is a sub-arc $\eta$ of $\omega$ and there are two
    closed curves $\beta$ and $\beta'$ that are disjoint from $\eta$, such
    that $\I(\eta, \gamma) \ge N_0$, $\ell_X(\beta) \ge \delta_B$ and
    $\ell_X(\beta') \ge \delta_B$, and the two endpoints of $\eta$ are
    close to $\beta$ and $\beta'$. Furthermore, there exists a segment of
    $\gamma$ joining one endpoint of $\eta$ to $\beta$ and a segment of
    $\gamma$ joining the other endpoint of $\eta$ to $\beta'$, such that
    both segments are disjoint from $\beta$, $\beta'$ and the interior of
    $\eta$.

    In this case (see \figref{Surgery2}), let $\alpha$ be the curve
    obtained by gluing two copies of $\eta$, $\beta$, $\beta'$ and a few
    sub-arcs of $\gamma$. If $\eta$ twists around either $\beta$ or
    $\beta'$ $N_0$--times, then either $\beta$ or $\beta'$ is
    $(n_0,L_0)$--horizontal by \propref{Suf-Horizontal}; otherwise, by
    \lemref{QuasiCurve2}, $\alpha$ has a segment that intersects $\gamma$
    $N_0$--times and stays close to $\eta$, in which case $\alpha$ is
    $(n_0,L_0)$--horizontal by \propref{Suf-Horizontal}.

    We now show that at least one of these three cases happens.
    
    Let $p_0$ and $q_0$ be the endpoints of $\omega$. Let $p_{-1}$ and
    $p_1$ be the adjacent intersection points along $\gamma$ to $p_0$ and
    $q_{-1}$ and $q_1$ be the adjacent intersection points along $\gamma$
    to $q_0$. By relabeling if necessary, we may assume $\omega$ passes
    from $p_0$ to $p_1$ and then to $p_{-1}$. Assume $\omega$ passes from
    $q_0$ to $q_1$ and then to $q_{-1}$. We allow the possibility $p_{-1} =
    q_0$ and $q_{-1} = p_0$, or $p_{-1} = q_{-1}$.
   
    \begin{figure}[htp!]
    \setlength{\unitlength}{0.01\linewidth}
    \begin{picture}(100, 23)
    \put(22,-2){
    \includegraphics{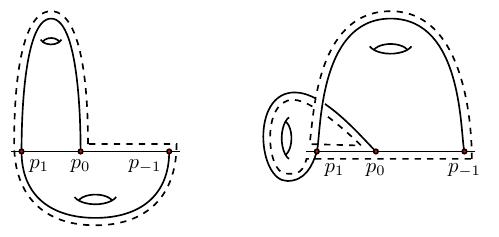}
    }
    \end{picture}
    \caption{Case (1) and (2) of \propref{HorExist}.}
    \label{Fig:Surgery1} 
    \end{figure} 

    \begin{figure} [htp!]
    \setlength{\unitlength}{0.01\linewidth}
    \begin{picture}(100, 18)
    \put(30,-2){ 
    \includegraphics{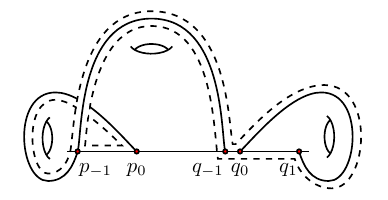} 
    }
    \end{picture} 
    \caption{Case (3) of \propref{HorExist}.}
    \label{Fig:Surgery2} 
    \end{figure} 

    Let $\omega_0$ be the sub-arc of $\omega$ from $p_0$ to $p_1$ and let
    $\omega_1$ be the sub-arc from $p_1$ to $p_{-1}$. Suppose $\omega_0
    \cup \omega_1$ intersects $\gamma$ at least $2N_0$ times. In this
    situation, we have several possibilities that will yield case (1) or
    (2). If $\omega_0 \cup \omega_1$ hits $\gamma$ on opposite sides, as in
    the left side of \figref{Surgery1}, then we are in case (1) with $\eta
    = \omega_0 \cup \omega_1$. Otherwise, one of $\omega_i$ hits $\gamma$
    on opposite sides and the other one hits $\gamma$ on the same side.
    Assume $\omega_0$ hits $\gamma$ on opposite sides, as in the right side
    of \figref{Surgery1}. We are again in case (1) if $\omega_0$ intersects
    $\gamma$ at least $N_0$ times. If not, let $\beta$ be the closed curve
    obtained from closing up $\omega_0$ with an arc of $\gamma$. Since
    $\omega_0$ has endpoints on $\gamma$ and $\beta$ stays close to
    $\omega_0$ by \lemref{QuasiCurve}, $\ell_X(\beta) \ge \delta_B$. We are
    now in case (2) with $\eta = \omega_1$. The dotted line in the right
    side of \figref{Surgery1} represents the closed curve obtained from
    this surgery.
    
    Similarly, let $\omega_0'$ be the sub-arc of $\omega$ from $q_0$ to
    $q_1$ and let $\omega_1'$ be the arc from $q_1$ to $q_{-1}$. As above,
    we are done if $\omega_0' \cup \omega_1'$ intersects $\gamma$ at least
    $2N_0$ times. 

    Since $\omega$ intersects $\gamma$ $5N_0$--times, if neither $\omega_0
    \cup \omega_1$ or $\omega_0' \cup \omega_1'$ intersects $\gamma$ at
    least $2N_0$ times, then the arc $\eta$ from $p_{-1}$ to $q_{-1}$ must
    have at least $N_0$ intersections with $\gamma$. If $\eta$ hits
    $\gamma$ on opposite sides, then we are in case (1). Otherwise, at
    least one of $\omega_0$, $\omega_1$, or $\omega_0 \cup \omega_1$ hits
    $\gamma$ on opposite sides. Close this arc to obtain a closed curve
    $\beta$ which has length at least $\delta_B$. Similarly, let $\beta'$
    be a closed curve obtained from closing up either $\omega_0'$,
    $\omega_1'$, or $\omega_0' \cup \omega_1'$. We are now in case (3). In
    \figref{Surgery2}, we've illustrated the situation when $\omega_0 \cup
    \omega_1$ forms $\beta$ and $\omega_0'$ forms $\beta'$. 

    It remains to show $\dS(X_{t_\alpha}, X_t) = O(1)$. By definition,
    $t_\alpha \le t$ and $\alpha$ is $(n_0,L_0)$--horizontal on
    $X_{t_\alpha}$. Assume $t-t_\alpha \ge s_0$, where $s_0$ is the
    constant of \thmref{Horizontal}. Let $\gamma_\alpha$ be an anchor curve
    for $\alpha$ at time $t_\alpha$. The assumption implies
    $\dS(X_t,\alpha) = O(1)$, so it is enough to show
    $\dS(\gamma_\alpha,\alpha) = O(1)$. Denote
    $D=\dS(\gamma_\alpha,\alpha)$. Recall that $D\leq
    \log_2\I(\alpha,\gamma_\alpha)+1$.
        
     Let $\tlambda$ and $\talpha$ be  as in  \defref{Horizontal}, let
     $\omega$  and $\tau$ be the segments of $\tlambda$ and $\talpha$ which
     are at most $\epsilon_B$ Hausdorff distance apart and which intersect
     $n_0$ lifts of $\gamma_\alpha$. We may assume
     $\I(\alpha,\gamma_\alpha)$ is large enough such that $\tau$ projects to
     a proper sub-arc of $\alpha$, that is, the length of $\tau$ is smaller
     than the length of $\alpha$. Let $\tf$ be the lift of an optimal map
     $f \colon X_{t_\alpha} \to X_t$. By the proof of \thmref{Horizontal},
     up to a multiplicative error $\tf(\omega)$ intersects $n_02^D$ lifts
     of a short curve $\gamma'$ on $X_t$. Moreover, a segment $\tau'$ of
     the geodesic representative of $\tf(\talpha)$  is $\epsilon_B$--close
     to $\tf(\tlambda)$  and  also intersects the $n_02^D$ lifts of
     $\gamma'$ up to a bounded error. The length of $\alpha$ on $X_t$ is
     bigger than the length of $\tau'$, hence $\I(\alpha,\gamma')\gmul
     n_02^D$. But $\I(\alpha,\gamma')=O(1)$ by assumption, therefore $D$
     must be bounded. \qedhere 

  \end{proof}

  \begin{proof}[Proof of \thmref{Retraction}.]
     
    By \thmref{Balanced}, $\pi$ is a coarse Lipschitz map. Let $\alpha$ be
    any short curve on $X_t$ and $\lambda_G$ be the maximally stretched
    lamination. We will show $\diamS \big( [X_t,X_{t_\alpha}]  \big) =
    O(1)$. 

    If (the stump of) $\lambda_G$ is a short curve it may not intersect any
    other short curve at $X_t$. Let $s$ be the first time $\lambda_G$
    intersects some short curve $\gamma$ in $X_s$. Since $\lambda_G$ is a
    short curve in the interval $[t,s]$ we have $\diamS \big( [X_t, X_s]
    \big) \eadd \dS(\alpha,\gamma) = O(1)$.

    We now show $\diamS \big( [X_s, X_{t_\alpha}] \big) = O(1)$. Since
    $\lambda_\GL$ intersects a short curve $\gamma$ on $X_s$, by
    \propref{HorExist}, there exists a curve $\beta$ on $X_s$ with
    $\I(\beta,\gamma) = O(1)$ and $\diamS \big( [X_{t_\beta}, X_s] \big) =
    O(1)$. We have $\dS(\alpha, \beta) \le \dS(\alpha,\gamma) +
    \dS(\gamma,\beta) = O(1)$, which implies by \thmref{Balanced} that
    $\diamS\big( [X_{t_\alpha}, X_{t_\beta}] \big) = O(1)$. The conclusion
    follows by the triangle inequality. \qedhere

  \end{proof}
  
\section{Examples of geodesics} 

  \label{Sec:Examples}
 
  In this section, we construct several examples of geodesics in the
  Thurston metric demonstrating various possible behaviors, proving
  \thmref{Bad-Geodesic} and \thmref{Not-Short} from the introduction. The
  main idea in all these examples is that it is possible for the maximally
  stretched lamination associated to some Thurston geodesic to be contained
  in some subsurface $W$ where the lengths of all curves disjoint from $W$
  (including $\partial W$) stay constant along the geodesic. This contrasts
  the behavior of a \Teich geodesic, where in a similar situation the
  length of $\partial W$ would get short along the geodesic \cite{Raf05}.
  Our construction can be made to be very general. However, in the interest
  of simplicity, we make an explicit construction when $W$ is a torus with
  one boundary component. 
 
  For a constant $\ell$, let $\calT(S_{1,1}, \ell)$ be the space of
  hyperbolic structures on a torus with one geodesic boundary where the
  length of the boundary curve is $\ell$. Note that we can equip
  $\calT(S_{1,1}, \ell)$ with the Thurston metric as usual (see \cite{GK13}
  for details). 

  Let $\mu$ be any irrational measured lamination on \torus. There is a
  unique way to complete $\mu$ to a \emph{complete} lamination $\lambda$,
  such that the complement are two ideal triangles, by adding two bi-infinite
  leaves both tending to the cusp in one direction and wrapping around
  $\mu$ in the other. Hence, for any $U_0 \in \calT(\torus)$, there exists
  a unique stretch path from $U_0$ with $\mu$ the stump of the maximally
  stretched lamination. We will denote by $\str(U_0,\lambda,t)$ this
  stretch path. 

  \begin{proposition} \label{Prop:Fixed-Boundary}

    There is a constant $\ell_0$ such that the following holds. For any
    $U_0 \in \calT(\torus)$, any irrational measured lamination $\mu$ on
    \torus, and the associated stretch path\[ U_t = \str(U_0, \lambda, t),
    \qquad U_t \in \calT(\torus), \] where $\lambda$ is the unique complete
    lamination containing $\mu$ as its stump, there is a bi-infinite Thurston
    geodesic $W_t \in \calT(\torus, \ell_0)$, $t\in \RR$, where the stump
    of the maximally stretched lamination is still $\mu$ and, for any other
    curve $\alpha$ in \torus, 
    \[
      \ell_{W_t}(\alpha) \emul \ell_{U_t}(\alpha). 
    \] 

  \end{proposition}
  
  \begin{proof}
    
    We will refer to \figref{Double} for this proof. 
    
    Choose $U_0 \in \calT(\torus)$ and represent $\mu$ as a geodesic
    lamination on $U_0$. Let $\lambda$ be the unique completion of $\mu$
    and let $A$ and $B$ be the ideal triangles in the complement of
    $\lambda$ in $U_0$.  Each $A$ and $B$ has two sides $a$ and $b$ coming
    from the two leaves of $\lambda$ tending toward the cusp, and a third
    side that wraps around the stump $\mu$. There is an involution of $U_0$
    fixing $\lambda$ and switching $A$ with $B$. Hence, the two anchors
    points of $A$ at $a$ and $b$ are glued respectively to the two anchor
    points of $B$ at $a$ and $b$. See the upper right-hand side of
    \figref{Double}.
    
    \begin{figure}[htp!]
    \setlength{\unitlength}{0.01\linewidth}
    \begin{picture}(100, 58)
    \put(3,-2){ 
    \includegraphics[totalheight=0.45\textheight]{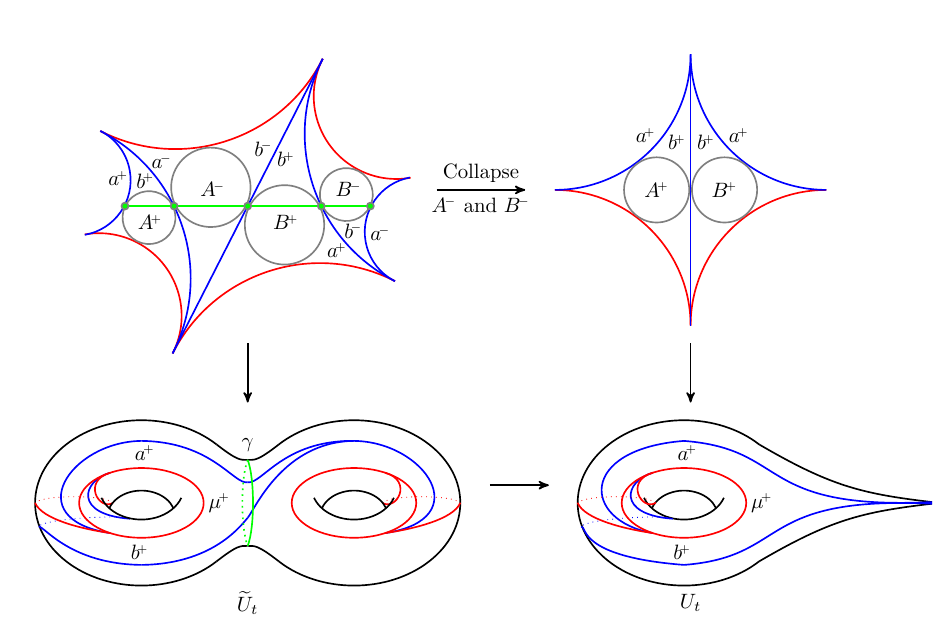}
    }
    \end{picture}
    \caption{We can double the stretch path on a punctured torus 
    to obtain a stretch path in a surface of genus two. The length of
    the curve $\gamma$ remains unchanged along the stretch path. }
    \label{Fig:Double}
    \end{figure}

    Now we double this picture. Let $U_0^+=U_0$ and let $U_0^-$ be an
    orientation reversing copy of $U_0^+$. We also label $A=A^+$, $B=B^+$,
    $a=a^+$, $b=b^+$ and $\mu=\mu^+$ and we label the associated objects in
    $U_0^-$ by $A^-$, $B^-$, $a^-$, $b^-$ and $\mu^-$. Cut $U_0^\pm$ open
    along $a^\pm$ and $b^\pm$. Via a reflection map, glue $b^+$ of $A^+$ to
    $a^-$ of $A^-$ such that the anchor point of $A^+$ in $b^+$ is glued to
    the anchor point of $A^-$ in $a^-$. Similarly, via a reflection map,
    glue $b^+$ to $b^-$, $a^+$ to $b^-$ and $a^+$ to $a^-$ gluing the
    corresponding anchor points. Note that the third sides of $A^{\pm}$ and
    $B^{\pm}$ wrap about $\mu^{\pm}$ in $U^{\pm}$. This yields a genus two
    surface $\widetilde{U}_0$ with a geodesic lamination
    $\widetilde{\lambda}$ that contains $\mu^\pm$ as its stump and has two
    extra leaves each wrapping about $\mu^+$ in one direction and wrapping
    about $\mu^-$ in the other direction. 

    In each $A^{\pm}$ or $B^{\pm}$, there is a geodesic arc connecting the
    midpoint of $a^{\pm}$ to the midpoint of $b^{\pm}$. Because the gluing
    maps were reflections, the angles of these arcs with $a^{\pm}$ and
    $b^{\pm}$ match and the four arcs glue together to form a separating
    geodesic $\gamma$ in $\widetilde{U}_0$. Let $\ell_0$ be the length of
    $\gamma$. By construction, $\ell_0$ is independent of the irrational
    lamination $\mu$ and $U_0$. 

    Let $\widetilde{U}_t = \str(\widetilde{U},\widetilde{\lambda},t)$. Then
    $\widetilde{U}_t$ is also obtained from doubling of $U_t$ as above.
    Along the stretch paths, the length of geodesic arcs connecting the
    midpoint of $a^{\pm}$ to the midpoint of $b^{\pm}$ does not change.
    Hence, they glue the same way in $\widetilde{U}_t$ to form a separating
    geodesic of the same length. That is, the length of $\gamma$ along
    $\widetilde{U}_t$ remains the constant $\ell_0$. 
   
    The proposition now holds where $W_t$ is the subsurface of
    $\widetilde{U}_t$ with boundary $\gamma$. The stretch map from $U_s$ to
    $U_t$ doubles to an $e^{t-s}$--Lipschitz homeomorphism between
    $\widetilde{U}_s$ and $\widetilde{U}_t$ that fixes $\gamma$ pointwise.
    Hence, the length of curves grow by at most a factor of $e^{t-s}$ both
    from $\widetilde{U}_s$ to $\widetilde{U}_t$ and from $W_s$ to $W_t$. 

    To see the last assertion in the proposition, we note that there is
    contraction map from $W_t$ to $U_t$. Consider the restriction of $A^-$
    and $B^-$ in $W_t$ and foliate it with horocycles perpendicular to the
    boundary. Then collapse these regions sending each horocycle to a
    point. This is a distance decreasing map: the derivative in the
    direction tangent to the horocycles is zero and in the direction
    perpendicular to the horocycles is 1. Since stretch paths preserve
    these horocycles, this collapsing map commutes with the stretch maps
    and the image of $W_t$ under the collapsing map is exactly $U_t$.
    Hence, the length of a curve $\alpha$ in $W_t$ is longer than the
    length of its image in $U_t$ under the collapsing map, which is longer
    than the length of the geodesic representative of $\alpha$. \qedhere

  \end{proof}

   This proposition is the building block for all the examples we construct
   in this section. Essentially, we can glue $W_t$ to a family of surfaces
   with desired behavior to obtain various examples. 
       
   \begin{proof}[Proof of \thmref{Bad-Geodesic}]
     
     Let $\mu$ be a simple closed curve and let $U_0$ be a point in
     $\calT(S_{1,1})$ where the length of $\mu$ is $\ep$, for some small
     $\ep>0$. Let $W_t$ be the family obtained by \propref{Fixed-Boundary}.
     Also, choose $V \in \calT(S_{1,1}, \ell_0)$ to be a point in the thick
     part. Let $\beta$ be a curve of bounded length in $V$ and define $V^n=
     D_\beta^n V$, where $D_\beta$ is the Dehn twist around the curve
     $\beta$. (The values of $\ep$ and $n$ are to be determined below.)
   
     Let $s$ be the time when $\mu$ has length $1$ in $W_s$. Define
     \[
       X= W_0 \cup V, \qquad Y = W_s \cup V \qquad\text{and}\qquad Z=
       W_{s/2}\cup V^n. 
     \]
     By $\cup$ we mean glue the two surfaces along the boundary and
     consider them as an element of $\calT(S_{2,0})$. We mark the surfaces
     so that they have bounded relative twisting along the gluing curve
     $\gamma$. (Note that relative twisting is only well-defined up to an
     additive error). We claim that, if $\log 1/\ep \gg n$, then \[
       \dL(X,Z) = \dL(Z,Y) = s/2. 
     \]
     First consider $X$ and $Z$. Indeed, since the length of $\mu$ grows
     exponentially in $W_t$, $s/2$ is a lower bound for the distance
     $\dL(X,Z)$. We need to show that the length of any other curve grows
     by a smaller factor. This is true for any curve contained in $W_t$ by
     \propref{Fixed-Boundary}. For any other curve $\alpha$, let $\alpha_W$
     be the restriction of $\alpha$ to $W_0$ and let $\bar \alpha_W$ be the
     restriction to the complement of $W_0$. Any representative of $\alpha$
     in $Z$ is no shorter than the geodesic representative. Therefore,
     there is a uniform constant $C$ such that 
     \[
       \ell_Z(\alpha) \leq e^{s/2} \ell_X(\alpha_W) + C\big(\ell_X(\bar
       \alpha_W) + n \ell_X(\beta) \I(\alpha, \beta)\big). 
     \]
     But $\ell_X(\bar \alpha_W) \gmul \I(\alpha, \beta)$. Hence, if
     $e^{s/2} \gg C+n$, we obtain \[ e^{s/2} \ell_X(\bar \alpha_W) \ge (C+n)
     \big(\ell_X(\bar \alpha_W)+ \ell_X(\beta) \I(\alpha,\beta \big).\]
     Therefore, for sufficiently large $s$, we have, 
     \[
       \ell_Z(\alpha) \leq e^{s/2} \big( \ell_X(\alpha_W) + \ell_X(\bar
       \alpha_W) \big) = e^{s/2} \ell_X(\alpha). 
      \]
     That is, $\mu$ is the maximally stretched lamination from $X$ to $Z$. 
     The argument for the distance from $Z$ to $Y$ is similar. 
    
     Now define
     \[
       \GL_1(t) = W_t \cup V,
     \]
     and let $\GL_2(t)$ be the geodesic obtained by a concatenation of
     geodesic connecting $X$ to $Z$ and $Z$ to $Y$. Let $\alpha$ be a curve
     disjoint from $W_t$ that has a bounded length in $X$. Then for all $t$
     \[
       \ell_{\GL_1(t)}(\alpha) = \ell_X(\alpha) \emul 1
     \]
     But the length of $\alpha$ in $Z=\GL_2(s/2)$ is of order $n$. 
     That is, 
     \[
       \dL(\GL_1(t), Z)\geq \log
       \frac{\ell_Z(\alpha)}{\ell_{\GL_1(t)}(\alpha)} \eadd \log n, 
     \]
     which can be chosen to be much larger than $D$. Note that a lower
     bound for the distance in the other direction can also be found by
     replacing $\alpha$ with $D_\beta^{-n}(\alpha)$. 
     
     To obtain the second part of \thmref{Bad-Geodesic}, we note that the
     distance from $Y$ to $X$ is only of order $\log \log(1/\ep)$, if $\ep$
     is small enough. We now choose $n$ and $\ep$ such that,
     \[
       \log \frac 1\ep \gg \log n, \qquad \log n \eadd D \qquad D \gg \log
       \log \frac 1\ep. 
     \]
     This way, if $s \ge 2D$, then $Z=\GL_2(s/2)$ has distance at least $D$
     to any point on any geodesic connecting $Y$ to $X$. This finishes the
     proof of part 2. \qedhere

   \end{proof} 
   
   Next, we construct an example showing that the set of short curves in a
   Thurston geodesic connecting two points is not the same as the set of
   short curves along the \Teich geodesic. This is in contrast with the
   following theorem. 
   
   \begin{theorem}[\cite{LRT12}] \label{Thm:BoundedComb}
     
     For every $K>0$ and $\ep>0$ there exists $\ep'>0$ such that whenever
     $X, Y \in \T$ are $\ep$--thick and have $K$--bounded combinatorics,
     then any Thurston geodesic \GL from $X$ to $Y$ remains in the
     $\ep'$--thick part.

   \end{theorem}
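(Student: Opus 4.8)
The plan is to argue by contradiction, showing that a short curve at an interior point of a Thurston geodesic between two thick surfaces forces a large subsurface coefficient between its endpoints, contradicting bounded combinatorics. Suppose the conclusion fails for some fixed $\ep$ and $K$: then there are $\ep$--thick surfaces $X,Y$ with $K$--bounded combinatorics, a Thurston geodesic $\GL\colon[a,b]\to\T$ from $X$ to $Y$, a time $t_0$, and a curve $\alpha$ with $\ell_{\GL(t_0)}(\alpha)=\epsilon_1$, where $\epsilon_1$ may be taken smaller than any constant we like depending on $\ep$ and $K$. We will produce either a curve $\gamma$ with $\twist_\gamma(X,Y)$ as large as we please, or an essential subsurface $W$ with $d_{\calC(W)}(\calP_X,\calP_Y)$ as large as we please, where $\calP_X,\calP_Y$ are short pants decompositions on $X$ and $Y$; either contradicts $K$--bounded combinatorics. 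First isolate the \emph{active interval} of $\alpha$: fix $\epsilon_2$ with $\epsilon_1\ll\epsilon_2<\ep$ and let $J=[t^{-},t^{+}]$ be the component of $\{\,t\in[a,b]:\ell_{\GL(t)}(\alpha)\le\epsilon_2\,\}$ containing $t_0$. Since $\ell_X(\alpha),\ell_Y(\alpha)\ge\ep>\epsilon_2$ we have $J\subset(a,b)$ with $\ell_{\GL(t^{\pm})}(\alpha)=\epsilon_2$, while on $J$ the curve $\alpha$ carries a standard collar of width $\eadd\log(1/\ell_{\GL(t)}(\alpha))$, enormous near $t_0$. As $\GL|_{[t_0,b]}$ is a Thurston geodesic from $\GL(t_0)$ to $Y$ and $\alpha$ regrows from $\epsilon_1$ to $\ell_Y(\alpha)\ge\ep$, \thmref{LengthRatio} gives $b-t_0\ge\log(\ep/\epsilon_1)$; symmetrically $\alpha$ has been shrunk by a factor $\succ 1/\epsilon_1$ on the way in from $X$.

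Next I would pin down the maximally stretched lamination $\lambda_{\GL}$ along $J$. Since $\lambda_{\GL}$ is stretched \emph{exactly} exponentially while, by \thmref{LengthRatio}, every curve grows by at most the Lipschitz factor, a leaf of the stump of $\lambda_{\GL}$ crossing the deep part of the collar of $\alpha$ at $\GL(t_0)$ would force, via the collar estimates of \secref{Hyperbolic} (\lemref{Collar} together with the quadrant and divergence lemmas), a lower bound on $\ell_{\GL(t)}(\alpha)$ uniform over $J$, contradicting $\ell_{\GL(t_0)}(\alpha)=\epsilon_1$. Hence on $J$ no leaf of the stump crosses $\alpha$: the leaves either miss the collar of $\alpha$ or only spiral into it, so the part of the deformation relevant to $\alpha$ happens inside this collar. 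Then $\alpha$ can reach length $\epsilon_1$ only in one of two ways. Either the spiralling of $\lambda_{\GL}$ into the collar drives the shortening and the subsequent regrowth, in which case — as in the computation of \secref{Shear}, where essentially all the twisting occurs near the time of minimal length — the relative twist $\twist_\alpha(\GL(t^{-}),\GL(t^{+}))$ is $\succ\log(1/\epsilon_1)$; or the shortening is accompanied by genuine twisting of the complementary pieces about $\alpha$, yielding a subsurface $W$ with $\alpha\subset\partial W$ across which the anchoring data of $\GL(t^{-})$ and $\GL(t^{+})$ differ by $\succ\log(1/\epsilon_1)$ in $\calC(W)$.

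Finally I would transport this jump from $J$ out to the endpoints. The twist around $\alpha$, resp.\ the $\calC(W)$--coordinate, accumulated along $\GL$ outside $J$ is controlled by the same mechanism behind \thmref{Shadow}: the corresponding projection of the shadow of $\GL$ to the annular curve graph $\calC(A_\alpha)$, resp.\ to $\calC(W)$, is a reparametrized quasi-geodesic (one reruns the retraction argument of \secref{Shadow} inside the annular cover of $\alpha$, resp.\ inside $W$), so a jump of size $\succ\log(1/\epsilon_1)$ across $J$ persists between $\pi(X)$ and $\pi(Y)$. Thus $\twist_\alpha(X,Y)\succ\log(1/\epsilon_1)$ or $d_{\calC(W)}(\calP_X,\calP_Y)\succ\log(1/\epsilon_1)$, and choosing $\epsilon_1$ so small that $\log(1/\epsilon_1)$ exceeds the constant hidden in $K$ contradicts $K$--bounded combinatorics. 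The lower bound on $\ell_{\GL(t)}(\alpha)$ so forced, uniformly in $t$, is the desired $\ep'$.

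The crux is the passage, in the second paragraph, from ``$\alpha$ is $\epsilon_1$--short at an interior point'' to ``some subsurface coefficient is $\succ\log(1/\epsilon_1)$'', carried out knowing only that $\lambda_{\GL}$ grows exponentially — with $\lambda_{\GL}$ possibly non-filling and non-uniquely-ergodic. The delicate case is when $\lambda_{\GL}$ spirals into the collar of $\alpha$: there one must push the hyperbolic trigonometry of \secref{Hyperbolic} (the sandwich and divergence lemmas) on the universal cover to show that keeping the path geodesic while $\alpha$ is tiny is possible only at the cost of a large relative twist, the analogue of \secref{Shear} without an explicit stretch path to compute with. Verifying that the twist and projection accumulated \emph{outside} $J$ are negligible compared with those across $J$, so that the jump survives to the endpoints, is the remaining technical point.
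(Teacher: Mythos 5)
First, note that the paper offers no proof of this statement: it is quoted from \cite{LRT12}, and the argument there runs through the \Teich geodesic --- bounded combinatorics makes the \Teich geodesic cobounded, $\dL$ agrees with the \Teich distance up to an additive constant on the thick part, and a point $Z$ of a Thurston geodesic carrying a very short curve would violate the resulting additivity $\dL(X,Z)+\dL(Z,Y)=\dL(X,Y)$ via elementary length-ratio lower bounds. Your route --- proving the contrapositive by showing that a short curve at an interior time forces a large annular or subsurface coefficient between the endpoints --- is genuinely different: it is an attempt at the Thurston-metric analogue of Rafi's characterization of short curves along \Teich geodesics, which is far stronger than what is needed and is exactly the kind of statement this paper shows to be delicate (\thmref{Not-Short} exhibits the failure of the na\"ive correspondence in one direction).

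As written, the proposal has gaps at its load-bearing steps. (1) The claim that a leaf of the stump crossing the deep collar of $\alpha$ forces a \emph{uniform} lower bound on $\ell_{\GL(t)}(\alpha)$ does not follow from \lemref{Collar} and the quadrant lemmas: what the collar lemma gives is $\ell_{\GL(t)}(\mu)\gmul \I(\mu,\alpha)\log\big(1/\ell_{\GL(t)}(\alpha)\big)$ for the stump $\mu$, hence only $\ell_{\GL(t)}(\alpha)\ge e^{-\ell_Y(\mu)/\I(\mu,\alpha)}$, a bound depending on data not controlled by $\ep$ and $K$. So you cannot conclude that the stump misses $\alpha$ on $J$, and the ensuing case analysis never starts. (2) The central dichotomy --- that keeping the path geodesic while $\alpha$ is $\epsilon_1$--short costs a twist or a $\calC(W)$--jump of size $\succ\log(1/\epsilon_1)$ --- is asserted, not proved; this is the entire difficulty, and the computation of \secref{Shear} treats one explicit stretch path, not an arbitrary geodesic with an arbitrary, possibly non-filling, maximally stretched lamination. (3) ``Rerunning the retraction argument inside the annular cover of $\alpha$, resp.\ inside $W$'' presupposes that the shadow of a Thurston geodesic to $\calC(W)$ and to the annular graph is an unparametrized quasi-geodesic for every subsurface; the paper proves this only for $\calC(\s)$, and that extension is a separate theorem. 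In short, the proposal is an outline of a program rather than a proof, and the needed result is obtained far more cheaply by the length-ratio/\Teich-comparison argument of \cite{LRT12}.
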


   \begin{proof}[Proof of \thmref{Not-Short}]

     Let $\phi$ be a pseudo-Anosov map in the mapping class group of
     $S_{1,1}$ and let $U_0 \in \calT(S_{1.1})$ be on the \Teich axis of
     $\phi$. For any $n \in \ZZ$, the maximally stretched lamination $\mu$
     from $U_0$ to $\phi^n(U_0)$ is irrational, hence the stretch path from
     $U_0$ to $\phi^n(U_0)$ is the unique Thurston geodesic connecting
     them. Let $U_t = \st(U_0,\mu,t)$ and let $U_s = \phi^n(U_0)$. The
     point $\phi^n(U_0)$ is also on the \Teich axis of $\phi$ and the
     \Teich geodesic segment connecting $U_0$ to $\phi^n(U_0)$ stays in a
     uniform thick part (independent of $n$) of \Teich space. From
     \cite{LRT12} we know that the Thurston geodesic connecting these two
     points also stays in this part and fellow travels the \Teich geodesic.
     
     Let $W_t$ be the family of of surfaces in $\calT(S_{1.1},\ell_0)$
     obtained from \propref{Fixed-Boundary} and let $V$ be any point in
     the thick part of $\calT(S_{1.1},\ell_0)$. Now define 
     \[
       \GL(t) = W_t \cup V, \qquad t \in [0,s].
     \]
     Then $\GL(t)$ is a Thurston geodesic in $\calT(S_{2,0})$. This is
     because the length of $\mu$ is growing exponentially and the length of
     every other curve is growing by a smaller factor. (The argument is an
     easier version of the arguments in the previous proof and is dropped.)
    
     Since $U_t$ is in the thick part, so is $W_t$ and hence $\GL(t)$. Let
     $X=\GL(0)$ and $Y=\GL(s)$. But, by taking $n$ large enough, we can
     ensure that $d_{W}(X,Y)$ is as large as desired, where $W$ is the
     subsurface of $S_{2,0}$ associated to $W_t$. It then follows from
     \cite{Raf05} that the boundary of $W$ is short along the \Teich
     geodesic connecting $X$ to $Y$, in fact, its minimum length is
     inversely proportional to $d_W(X,Y)$. That is, $\partial W$ has
     bounded length along the Thurston geodesic $\GL(t)$ but is arbitrary
     short along the \Teich geodesic. This finishes the proof of the
     theorem. \qedhere 

  \end{proof} 
   
 
  \bibliographystyle{alpha}
  \bibliography{main}

  \end{document}